\definecolor{Red}{cmyk}{0,1,1,0}
\definecolor{Blue}{cmyk}{1,1,0,0}
\theoremstyle{plain}
\newtheorem{theorem}{Theorem}[section]
\newtheorem{corollary}[theorem]{Corollary}
\newtheorem{proposition}[theorem]{Proposition}
\newtheorem{lemma}[theorem]{Lemma}
\theoremstyle{definition}
\newtheorem{definition}[theorem]{Definition}
\newtheorem{remark}[theorem]{Remark}
\newtheorem{example}[theorem]{Example}
\newcommand{\N}{\mathbb{N}}
\newcommand{\R}{\mathbb{R}}
\def\.{\cdot}
\def\R{{\mathbb R}}
\def\N{{\mathbb N}}
\def\S^1{\mathbb{S}^1}
\def\S{{\cal S}}
\def\.{\cdot}
\def\diam{\text{diam}}
\def\({\left(}
\def\){\right)}
\newcommand{\supp}{\mbox{supp}}
\begin{document}

\title[Spectral triples and Dixmier Trace Representations]
{Spectral Triples and Dixmier Trace Representations of Gibbs Measures: theory and examples}

\author[L. Cioletti]{Leandro Cioletti}
 \address{\rm L. Cioletti, Departamento de Matemática, Universidade de Bras\'ilia, Bras\'ilia, Brazil, 
\href{https://orcid.org/0000-0002-8131-2043}{Orcid 0000-0002-8131-2043}, \href{mailto:cioletti@mat.unb.br}{cioletti@mat.unb.br}}
%\email{cioletti@mat.unb.br}

\author[L. Y. Hataishi]{Lucas Y. Hataishi}
\address{\rm L. Y. Hataishi, Department of Mathematics, University of Oslo, Oslo, Norway, \href{mailto:lucasyh@math.uio.no}{lucasyh@math.uio.no}}
%\email{lucasyh@math.uio.no}

\author[A. O. Lopes]{Artur O. Lopes}
\address{\rm A. O. Lopes, IME, Universidade Federal do Rio Grande do Sul, Porto Alegre, Brazil, \href{https://orcid.org/0000-0003-2040-4603}{Orcid 0000-0003-2040-4603}, \href{mailto:artur.lopes@gmail.com}{artur.lopes@gmail.com}}
%\email{artur.lopes@gmail.com}

\author[M. Stadlbauer]{Manuel Stadlbauer}
\address{\rm M. Stadlbauer, Instituto de Matemática, Universidade Federal do Rio de Janeiro, Rio de Janeiro, Brazil, \href{https://orcid.org/0000-0003-2537-9128}{Orcid 0000-0003-2537-9128},\href{mailto:manuel@im.ufrj.br}{manuel@im.ufrj.br}}
%\email{manuel@im.ufrj.br}

%\thanks{L. Cioletti, A. O. Lopes and M. Stadlbauer  are partially supported by Conselho Nacional de Desenvolvimento Científico e Tecnológico}

\subjclass[2010]{28Dxx, 37C30, 37D35, 58B34}
% 37C30 Zeta functions
% 37D35 Thermodynamic formalism, variational principles, equilibrium state
% 58B34 Noncommutative geometry (à la Connes) 
% 58J51 Relations between spectral theory and ergodic theory, e.g. quantum unique ergodicity
% 28Dxx Measure-theoretic ergodic theory

\keywords{Spectral Triple, Dixmier trace, Polynomial decay of correlations.}
\date{}

\begin{abstract}
In this paper we study spectral triples and non-commutative expectations 
associated to expanding and weakly expanding maps. 
In order to do so, we generalize the Perron-Frobenius-Ruelle 
theorem and obtain a polynomial decay of the operator, 
which allows to prove differentiability of a dynamically 
defined $\zeta$-function at its critical parameter. 
We then generalize Sharp's construction of spectral triples 
to this setting and provide criteria when the associated spectral 
metric is non-degenerate and when the non-commutative expectation 
of the spectral triple is colinear to the integration with respect 
to the associated equilibrium state from thermodynamic formalism.  
Due to our general setting, we are able to simultaneously analyse 
expanding maps on manifolds or connected fractals, subshifts of 
finite type as well as the Dyson model from statistical physics,
% (cf. \cite{MR0436850,MR0295719})
which underlines the unifying character of noncommutative geometry. 
Furthermore, we derive an explicit representation of 
the $\zeta$-function associated to a particular class of 
pathological continuous potentials, giving rise to examples 
where the representation as a non-commutative expectation 
via the associated zeta function holds, and others where it does not hold. 
\end{abstract}

\maketitle

\section{Introduction}
This paper aims to provide a further contribution to the relation between thermodynamic formalism and ideas from noncommutative geometry. More specifically, we are interested in a noncommutative representation of Gibbs measures and the underlying mechanisms from thermodynamic formalism.

The starting point of our investigations are the works
by Richard Sharp \cite{MR2990125,MR3485414}.
In there, Sharp shows, among other things,
that Gibbs Measures, appearing in the context
of Thermodynamic Formalism and conformal graph directed Markov systems,
can be recovered from suitable spectral triples and Dixmier traces.
These are important mathematical objects in noncommutative geometry, where certain
geo\-me\-tric spaces are analyzed by using operator and $C^{*}$-algebras (see \cite{MR823176,MR1303779}.
\mbox{Although} this theory has a geometric origin,
there has been considerable interest in finding
examples where the $C^{*}$-algebra is the space of continuous functions on
suitable subspaces of infinite cartesian products, sometimes called
Cantor sets. The first examples were
given by Connes in \cite{MR823176,MR1007407,MR1303779}
and in the last decades, this subject has attracted some attention, see
for example, 
\cite{MR2261610,MR3381484,MR3438329,MR3115301,MR3084488,
MR2511637,MR2990125,MR3485414,MR3054307}
and references therein.
These works concern not just Cantor sets,
but also hyperbolic dynamics, directed Markov systems,
IFSs and symbolic dynamics.

Connes showed that spectral triples can be used to define a
pseudo-metric on the state space of the associated $C^{*}$-algebra.
The way this pseudo-metric is defined is analogous to the Wasserstein distance (or Monge-Kantorovitch metric)
and some general conditions ensuring that this pseudo-metric is indeed a metric
were obtained independently by Pavlovi\'c and Rieffel (see
\cite{MR1663810,MR1647515}).
Kesseb\"ohmer and Samuel studied metric aspects of this
theory in \cite{MR3084488}, following \cite{MR2261610}, but in the context of Gibbs measures.
They proved that Connes' pseudo-metric is actually a metric that induces
the weak-${*}$-topology on the space of Borel probability measures
on certain subshifts of finite type.
Several other geometric aspects of the spectral triples associated
to Gibbs measure of H\"older potentials are studied in \cite{MR3084488}.
For example, they compute the dimension of their spectral triples,
and show that the noncommutative volume constant is given by the inverse of
Kolmogorov-Sinai entropy of the Gibbs measure.
This fact was established by showing that the
Gibbs measure associated to a H\"older potential
admits a Dixmier trace representation, which leads to an identity similar to \eqref{eq:noncommutative-integral}, but
without the factor two. One important ingredient in their proof is the existence of a Haar basis for the $ {L}^2$ space of a Gibbs measure.

In here, we focus on these questions associated to a class of weakly expanding maps, which allows to simultaneously study subshifts of finite type, expanding local diffeomorphisms, some connected fractal sets like the Sierpiński gasket or weakly expanding maps like the Manneville-Pomeau family at the same time. This class is defined as
follows and is a weakly expansive version of the class of expanding maps defined in \cite{Ruelle--The-Thermodynamic-Formalism-For--CMP1989}. 

\begin{definition}[Ruelle expansive] %\label{defn-R-Expansive-Intro} 
Assume that $(\Omega,d)$ is a compact metric space of diameter 1, that $r>0$ and that $(c_n :[0,1] \to [0,\infty))$ is a sequence of functions such that $c_n(0)=0$, $t \mapsto c_n(t)$ is  
strictly increasing for all $n \in \N$,  
$n \mapsto c_n(t)$ is decreasing  and 
$\lim_{n \to \infty} c_n(t) =0$ for all $t \in [0,1]$. 
We then refer to $T: X \to X$ as \emph{$(c,r)$-Ruelle expansive} 
if for all $n \in \N$, $x, {y}, \tilde{x} \in \Omega$  
with $d(x, {y})<r$ and  {\text{$T^n(\tilde{x})=x$}},
there exists a unique $\tilde{y}\in \Omega$  with $T^n(\tilde{y})={y}$  and 
\[ d(T^k(\tilde{x}), T^k(\tilde{y})) \leq  c_{n-k} (d(x,y)) \quad \forall k =0,1,\ldots ,n .\]
\end{definition}
{
A standard example for  elements in this class are open maps which uniformly expand distances by $\lambda^{-1}$ for $\lambda \in (0,1)$ as the unicity of the preimage is a consequence of the openness whereas the uniform expansion guarantees that $c_n(t) \ll t \lambda^{n}$. This class of examples was introduced in \cite{Ruelle--The-Thermodynamic-Formalism-For--CMP1989} and is referred to as \emph{Ruelle expanding} in the literature (see Example \ref{ex:ruelle-expanding} for the definition given in \cite{Ruelle--The-Thermodynamic-Formalism-For--CMP1989}).} A further example is the class of Manneville-Pomeau maps as introduced in \cite{Liverani-Saussol-Vaienti--A-Probabilistic-Approach-To--ETDS1999}. In this case, it is shown in Example \ref{ex:Manneville} below that $c_n(t) \ll  (n+t^{-\delta})^{-1/\delta}$.

For a given function $J : \Omega \to (0,1)$, we are now interested in establishing a differential calculus on $\Omega$ with respect to the \emph{potential} $\log J$ by following ideas from noncommutative geometry. This is of special relevance as, among other things, the theory of noncommutative geometry aims to unify the differential calculus for smooth and fractal spaces in terms of their $C^\ast$-algebras. This unifying object, referred to as \emph{spectral triple}, is defined as follows.
 
\begin{definition}\label{def-spec-triples}
A spectral triple is an ordered triple $(A,H,D)$, where
\begin{enumerate}
\item
 $H$ is a Hilbert space;
\item
 $A$ is a $C^{*}$-algebra and there exists a faithful representation $a \to L_a$ from $A$ to the bounded  operators of $H$;
%\marginpar{\tiny This has to be a faithful representation}
\item
 $ D$ is an essentially self-adjoint  unbounded linear operator on $H$ with
compact resolvent such that $\{a\in A: \|[D,L_a]\|<+\infty \}$ is dense in $A$. In here, 
$[D, L_a]$ refers to the commutator operator and  $D$
is called \emph{Dirac operator}.
\end{enumerate}
\end{definition}
Observe that a spectral triple a priori is not of dynamical nature as $D$ is a differential operator and $[D,L_a]$ might be seen as a  noncommutative version of the derivative of $a$ (for a list of examples, see \cite{Samuel--A-Commutative-Noncommutative-Fractal--2010} and Remark \ref{remark:s_0} below). Furthermore, each spectral triple gives rise to a pseudo-metric on the set of states of $A$, known as \emph{spectral metric} or \emph{Connes' metric} (cf. Section \ref{subsec:spectral-metric}). 

On the other hand, the relation of the Dirac operator to thermodynamical formalism is revealed by the representation of the Dixmier trace of $L_a|D|^{-1}$ as the integral with respect to the equilibrium state of $\log J$.
{
Recall that in order to construct the  Dixmier trace  $\textrm{Tr}_{\omega}$, one has to choose a positive, normalized linear functional $\omega$ defined on the Banach space of bounded sequences with some additional conditions (see \cite{Dixmier--Existence-De-Traces-Non--CRASPSB-1966}). Then, the Dixmier trace is defined by
\[  \textrm{Tr}_{\omega}(B) := \omega\left( \left(  \tfrac{1}{\log (n)}  \textstyle\sum_{k=1}^n \beta_k \right)_n \right),\]
where $\beta_k$ are the eigenvalues with multiplicities of the positive operator $|B| = \sqrt{B^* \, B}$ in descending order. However, if $ \lim_{n \to \infty} \frac{1}{\log (n)}  \sum_{k=1}^n \beta_k$ exists, then $B$ is referred to as \emph{measurable},
\[  \textrm{Tr}_{\omega}(B) = \lim_{n \to \infty} \frac{1}{\log (n)}  \sum_{k=1}^n \beta_k,\]
and $\textrm{Tr}_{\omega}(B) $ is  called the \emph{noncommutative  expectation} of $B$.
}

We now present our main results and relate them to the state of art. In Section \ref{sec-preliminares}, we provide the necessary results on the asymptotic behaviour of the action of the  
Ruelle operator, defined by  
\[
\mathcal{L}_a (f) (x)  
= 
\sum_{y\in T^{-1}(x) } 
e^{a(y) } f(y),
\] 
on a suitable function space. Namely, we obtain polynomial 
contraction rates of $\mathcal{L}_a^n$ towards a limiting 
distribution if either $c_n(t) \ll \lambda^{-n}(t)$ and $a$ 
is of weak regularity or $c_n(t)$ decays polynomially and $a$ is H\"older. 
As these results are new, without doubt of independent 
interest and refine recent results by Kloeckner in \cite{Klo}, 
we now state a  simplified version of the main result of this first part.        

In order to do so, we have to introduce a version of weak, 
but uniform regularity as follows. For $t> 0$, set
$\omega_{\alpha, \beta \textrm{log}} (t) := t^\alpha (\log t_0  -\log t)^{- \beta}$, where $t_0 > 0$ 
is chosen such that
$\omega_{\alpha, \beta \textrm{log}}$ is concave. 
We then refer to $f: \Omega  \to \R$  as 
$ \omega_{\alpha, \beta \textrm{log}}$-Hölder continuous 
if there is $\textrm{Höl}_{\omega_{\alpha, \beta \textrm{log}}}(f)> 0$ with
\[
|f (x) - f (y)| \leq \textrm{Höl}_{\omega_{\alpha, \beta \textrm{log}}}(f)   \omega_{\alpha, \beta \textrm{log}}(d(x,y))
\]
In particular, $ \omega_{\alpha,0 \textrm{log}}$-Hölder continuity 
coincides with the classical $\alpha$-Hölder continuity, 
whereas $\omega_{0, \beta \textrm{log}}$-Hölder continuous 
functions are less regular.

{
As it will turn out below, it is possible to handle pairs $(T,a)$ in a unified way, provided that $T$ is either Ruelle expanding and $a$ is  $\omega_{0, \beta \textrm{log}}$-Hölder continuous or $T$ is Ruelle expansive and $a$ is Hölder continuous. That is, we consider the following two cases.
\begin{description}
 \item[Case 1] $T$ is Ruelle expanding and $a$ is $\omega_{0, \beta \textrm{log}}$-Hölder continuous for some $\beta > 1$. In this situation, $\gamma := \beta -1$, $\omega^\dagger := \omega_{0, \beta \textrm{log}}$ and, for $n \in \mathbb{N}$ and $t \in (0,1]$,
$\omega_n(t):= (|\log t| + n)^{-\gamma}$.
 \item[Case 2] $T$ is  Ruelle expansive with contraction rate $(t^{-1/\beta} + n)^{-\beta}$ for some $\beta > 1$ and $a$ is  $ \omega_{\alpha,0 \textrm{log}}$-Hölder continuous for some $\alpha > 1/\beta$. In this situation, $\gamma := \alpha \beta -1$,  $\omega^\dagger := \omega_{\alpha - 1/\beta, 0\textrm{log}}$ and,  for $n \in \mathbb{N}$ and $t \in (0,1]$, $\omega_n(t):= (t^{-1/\beta} + n)^{-\gamma}$.
\end{description}
Observe that, in both cases, provided that $n$ is sufficiently large, $\omega_n$ is concave and therefore gives rise to the notion of $\omega_n$-Hölder continuity and, in particular, to the norm $\|f\|_{\omega_n} := \|f\|_{\infty} + \textrm{Höl}_{\omega_n}(f)$. In both cases, we show below the following contraction result (for $\gamma > 0 $, see  Corollary \ref{cor:decay-to-invariante-measure} and for $\gamma> 1$, see Corollary \ref{cor:decay-of-Hoelder-coefficients}), which is an immediate consequence of Theorem \ref{theo:main-contraction-result}.

\smallskip

\begin{theorem}
Assume that $T$ is topologically mixing, that $\mathcal{L}_a(\mathbf{1})=\mathbf{1}$ and that we are either in Case 1 or Case 2.
\begin{enumerate}
  \item  If $\gamma>0$, then there exists a probability measure $\mu$ and $C> 0$ such that for any $\omega^\dagger$-Hölder continuous function $f: \Omega \to \R$ and $n \in \N$, $ \| \mathcal{L}^n_a(f) - \mu(f) \|_\infty \leq C \textrm{Höl}_{\omega^\dagger}(f) n^{-\gamma}$.
  \item  If $\gamma>1$, then there exists a probability measure $\mu$ and $C> 0$ such that for any $\omega^\dagger$-Hölder continuous function  $f: \Omega \to \R$ and $n \in \N$,  $ \| \mathcal{L}^n_a(f) - \mu(f) \|_{\omega_n} \leq C \textrm{Höl}_{\omega^\dagger}(f)$.
\end{enumerate}
\end{theorem}
}

It is worth noting here that the proof is based on ideas from optimal transport
(cf. Theorem \ref{theo:main-contraction-result}) and allows to obtain 
a complete description on the asymptotic behaviour of 
$\mathcal{L}^n_a$, even if  $\mathcal{L}_a(\mathbf{1}) \neq \mathbf{1}$
 (Corollary \ref{cor:decay-to-conformal-measure}).
In particular, we obtain an extension of the main 
contraction result in \cite{Klo} to Ruelle expanding, Ruelle expansive maps
and Ruelle operators with $\mathcal{L}_a(\mathbf{1}) \neq \mathbf{1}$. 
We also would like to point out that the method of proof is different 
from the one in \cite{Klo} as it does not make use of half-times. 
{ However, we have to admit that the we do not yet fully understand the transition at $\gamma =1$. However, when applying our approach to the Manneville-Pomeau family, this corresponds to the parameter $\frac{1}{2}$, where it is known from the work of Gouëzel (\cite{Gouezel--Sharp-Polynomial-Estimates-For--IJM2004}) that there is a transition of the decay rate at this parameter.
Furthermore, the approach in here provides direct access to
stability under perturbation of the potential: the equilibrium state, the conformal measure,
the eigenfunction and the leading eigenvalue vary Hölder 
continuously as functions of $a$. That is, we obtain the following from Corollary\ref{cor:uniqueness} and Theorem \ref{theo:continuity}.

\begin{theorem}
Assume that $T$ is topologically mixing, that  $\gamma>0$ and that we are either in Case 1 or 2.
\begin{enumerate}
 \item  There exists a unique continuous and strictly positive function $h_a$ such that
 $\mathcal{L}_a(h_a)=\lambda^{(1)}_a h_a$ for some $\lambda^{(1)}_a > 0$. Moreover, $h_a$ is $\omega^\dagger$-Hölder continuous.
 \item There exists a a unique probability measure $m_a$ with
 $\int \mathcal{L}_a(f) dm_a = \lambda_a^{(2)} m_a(f)$ for any continuous function $f:\Omega \to \R$ and some  $\lambda^{(2)}_a > 0$.
 \item We have $\lambda^{(1)}_a =\lambda^{(2)}_a$, for the constants given by (1) and (2).
 \item For $M > 0$, there exists $C>0$ such that the following holds. If $a,b$ with $\textrm{Höl}_{(\ast)}(a), \textrm{Höl}_{(\ast)}(b)< M$,  where $(\ast)$ stands for either $\omega_{0, \beta \textrm{log}}$ in Case 1 or $\omega_{\alpha, 0\textrm{log}}$ in Case 2, respectively, then, with $W_{\omega^\dagger}$ referring to the Wasserstein metric associated with $\omega^\dagger$,
 \[ W_{\omega^\dagger}(m_a,m_b) , \|h_a - h_b\|_\infty, |\lambda_a - \lambda_b| \leq C \|a-b\|_\infty^{\frac{\gamma}{\gamma + 1}}. \]
%
%  Assume that  $a,b$ with $\textrm{Höl}_{(\ast)}(a), \textrm{Höl}_{(\ast)}(b)< M$, for some fixed $M> 0$,  where $(\ast)$ stands for either $\omega_{0, \beta \textrm{log}}$ in Case 1 or $\omega_{\alpha, 0\textrm{log}}$ in Case 2, respectively. Then there exists $C> 0$,  such that, with $W_{\omega^\dagger}$ referring to the Wasserstein metric associated with $\omega^\dagger$,
%  \[ W_{\omega^\dagger}(m_a,m_b) , \|h_a - h_b\|_\infty, |\lambda_a - \lambda_b| \leq C \|a-b\|_\infty^{\frac{\gamma}{\gamma + 1}}. \]
\end{enumerate}
\end{theorem}
}

In Section \ref{mainsec-1}, we adapt the approach by Sharp in \cite{MR2990125} 
and construct a spectral triple for Ruelle expanding maps and a 
given continuous function $J$ (see Proposition \ref{prop:spectral-triple}). 
By regarding a specific example on the torus, 
it then becomes obvious that this spectral triple is a classical 
directional derivative and that its spectral metric is 
degenerated (see Remark \ref{remark:s_0}). 
However, by considering a direct sum of these spectral 
triples and provided that the potential $J$ reflects the 
metric structure of $\Omega$, we then obtain in 
Theorems \ref{theo:spectral-metric} and \ref{theo:spectral-metric-sft} 
that the spectral metric associated to this new spectral triple 
in fact is Lipschitz equivalent to the Wasserstein metric 
on probability measures on $\Omega$. { We would like to point out that the approach in here is different
to the original motivation by Connes, which is based on the Dirac operator on smooth spinors (see \cite{Lord-Rennie-Varilly--Riemannian-Manifolds-In-Noncommutative--JGP-2012,Varilly--An-Introduction-To-Noncommutative---2006}). However, we will only make use of Connes' abstract definition of a spectral triple in order to be able to handle spaces without differentiable structure and discuss these in Remark \ref{remark:analysis on fractals} for the Sierpiński gasket.}

It is worth noting that, as we are able to cover subshifts of 
finite type and weakly regular potential functions, our approach 
is applicable to any fractal which is the attractor of a 
(uniformly contracting) graph directed conformal Markov system 
with respect to a finite alphabet 
(see \cite{Mauldin-Urbanski--Graph-Directed-Markov-Systems--2003}), 
e.g., the classical Cantor set or the limit set  of  a 
Schottky group acting on hyperbolic $n$-space. 
In particular, Theorem \ref{theo:spectral-metric-sft} 
implies that the spectral metric is equivalent to the 
Euclidean metric, which generalizes, e.g., 
Theorem 4.5 in \cite{Guido-Isola--Dimensions-And-Singular-Traces--JFA2003}.

On the other hand, if the fractal under consideration is 
not totally disconnected, a suitable interplay between 
the potential and the inverse branches of $T^k$ are sufficient 
to obtain the analogue statement in this case 
(Theorem \ref{theo:spectral-metric}). We then show how to apply 
the result to the Sierpiński gasket (see Subsection \ref{subsec-sierpinski}).
This application is of particular interest as it provides 
a construction of a spectral triple, which does not makes 
use of infinitesimal curves like in \cite{Christensen-Ivan-Lapidus--Dirac-Operators-And-Spectral--AM2008} 
or the removed parts, the \emph{lacunas}, 
of the gasket as in \cite{Cipriani-Guido-Isola--Spectral-Triples-For-The--JFA2014} (for more details on the analysis on the Sierpiński Gasket, see Remark \ref{remark:analysis on fractals}).

Thereafter, in Section \ref{sec-zeta-dixmier}, 
we then discuss the relation of these operators to 
thermodynamic formalism through the Diximer trace $\textrm{Tr}_{\omega}$ of $L_a|D|^{-1}$.
Here , we want to point out that there are several constructions of spectral triples in the literature associated to iterated functions systems based upon the prior definition of a $C^\ast$-algebra (\cite{MR2261610,Christensen-Ivan-Lapidus--Dirac-Operators-And-Spectral--AM2008,Samuel--A-Commutative-Noncommutative-Fractal--2010,MR3084488}) as well as the direct construction by Sharp (\cite{MR2990125}) for subshifts of finite type. Moreover, independent of the construction of the spectral triple, 
it was shown by several authors (see 
\cite{Christensen-Ivan-Lapidus--Dirac-Operators-And-Spectral--AM2008,Samuel--A-Commutative-Noncommutative-Fractal--2010,MR2990125,MR3084488}) that
$L_a|D|^{-1}$ is measurable and that
the noncommutative expectation satisfies
\begin{equation} \label{eq:noncommutative-integral} 
\textrm{Tr}_{\omega}(L_a|D|^{-1}) =  \frac{1}{\mathfrak{h}(\mu)}\,\,\int_{\Omega}a\, d\mu, 
\end{equation}
where $\mu$ refers to the equilibrium state associated to the Hölder continuous potential $\log J$ and $\mathfrak{h}(\mu)$ to its the Kolmogorov-Sinai entropy. In here, our results in Section \ref{sec-preliminares} allow us to push these results to spectral triples associated to Ruelle expansive maps and potentials of lower regularity. That is, we show in Theorem \ref{theo:dixmier-for-Ruelle-expansive-and-expanding} that, up to a factor the identity in \eqref{eq:noncommutative-integral}
holds for either Ruelle expanding maps and potentials of lower regularity or Ruelle expansive maps and Hölder potentials.
In fact, we show more. Namely, we only require that $\gamma>0$ and, in particular, that the underlying potential is not necessarily normalized. However, this requires to add additional terms (see Theorem \ref{def-spec-triples}).       
The mechanism behind the proof of this identity is an application of a version of the Hardy-Littlewood Tauberian theorem (cf. Th. \ref{theo:Tauber}). Namely, Theorem \ref{theo:dixmier-for-Ruelle-expansive-and-expanding} becomes a corollary of the differentiability of a dynamically defined $\zeta$-function at a critical parameter $\rho$ (see Lemma \ref{lem:differentiability}). In other words, one has to show that 
\begin{equation} \nonumber \label{kw}
\lim_{s\to \rho^{+}}(s-\rho)\,\,\sum_{k=0}^\infty \mathcal{L}_{sa}^k (f) (x)\,
=  
\frac{h_{\rho a}(x)}{\mathfrak{h}(\mu_{\rho a})}\,\,\int_{\Omega}f\, dm_{\rho a},
\end{equation}
which is equivalent in showing that the conformal measure $m_s$ associated to $J^s$ varies continuously at $s=\rho$. Among other things, the lack of results in this generality in the literature lead us to investigate the asymptotical behaviour of $\mathcal{L}^n_a$ in Section \ref{sec-preliminares} as the iterates of its dual provide access to the regularity of $\mu_s$ as a function of $s$.   

In the subsequent section, we then present applications, examples and counterexamples. We begin with a brief discussion of the relation between the Dixmier trace representation, the construction of spectral triples and the topological dimension of $\Omega$. In particular, it turns out that in the totally disconnected case, it is possible to construct spectral triples, whose spectral metric is a metric and which comes with a Dixmier trace representation with respect to the same exponent. However, in case of a connected space, this seems to be impossible. After that, we consider
a specific potential function $\Phi$ associated with the  Dyson model of ferromagnetism (cf. \cite{MR0436850,MR0295719}). It turns out that $\Phi$
is a natural example for an application of the above contraction result, but also shows a very different behaviour than the one known from conformal dynamics as its pressure function is strictly positive and increasing (cf. Lemma \ref{lem:dyson-pressure-is-increasing}). In particular, in order to obtain a spectral triple with a thermodynamic representation of the Dixmier trace, it is necessary to consider potentials of the form $\Phi - t$, for $t > \max \Phi$ (cf. Proposition \ref{prop:dixmier-triple}). After that, we present in Section \ref{exampand} examples within the Walters family of potentials defined on $\Omega$ (see \cite{MR2342978}) whose dynamical $\zeta$-functions are differentiable or non-differentiable at the critical parameter. We are also able to present explicit expressions for the associated zeta functions.

In Section \ref{sec-top-markov-chains} %%and \ref{sec-uncountable-alphabets}
we then show how to transfer these results to topological Markov chains and Hölder continuous potentials and give some remarks on the difficulty in the context of uncountable alphabets.
In the Appendices, we present explicit computations for the zeta function of potentials in Walters' family (Appendix \ref{sec112}) as well as a short discussion of the construction of a spectral triple in \cite{MR3084488} in Appendix \ref{Haar} and an explicit representation for the equilibrium states (Appendix \ref{expWal}). The latter is of interest as this allows to obtain an explicit representations of the Dirac operator defined in  \cite{MR3084488}.

\section{Asymptotics of  Ruelle's operator}
\label{sec-preliminares} 
In this section we describe the asymptotic behaviour of Ruelle's operator and derive a stability result which will turn out to be essential for the representation of the Dixmier trace as an integral. The main tool for revealing the asymptotics of the Ruelle operator is the construction of a coupling which depends on the interplay between the contraction properties of $T$ with the regularity of the potential function. As mentioned above, we are also interested in Ruelle expansive maps, which have weaker contraction properties and whose definition is repeated for convenience.   

\begin{definition}[Ruelle expansive] 
%\label{defn-R-Expansive}
Assume that $(\Omega,d)$ is a compact metric space of diameter 1, that $r>0$ and that $(c_n :[0,1] \to [0,\infty))$ is a sequence if functions such that $c_n(0)=0$, $t \mapsto c_n(t)$ is  
strictly increasing for all $n \in \N$,  
$n \mapsto c_n(t)$ is decreasing  and $\lim_{n \to \infty} c_n(t) =0$   
for all $t \in [0,1]$. We then refer the map 
$T: X \to X$ as \emph{$(c,r)$-Ruelle expansive} 
if for all $n \in \N$, $x, {y}, \tilde{x} \in \Omega$  
with $d(x, {y})<r$ and  {\text{$T^n(\tilde{x})=x$}},
there exists a unique $\tilde{y}\in \Omega$  
with $T^n(\tilde{y})={y}$  and 
\[ d(T^k(\tilde{x}), T^k(\tilde{y})) \leq  c_{n-k} (d(x,y)) \quad \forall k =0,1,\ldots ,n .\]
%% Furthermore, $T$ is called locally bi-Lipschitz if there exists $\rho > 0$ such that 
%% $  \rho \lambda d(x,y) < d(\tilde{x}, \tilde{y}) \leq  \lambda d(x,y)$ uniformly for all $x,y,\tilde{x}, \tilde{y}$ as above. 
\end{definition}
As a first consequence of the definition, observe that $T^n$ is a local homeomorphism with inverse 
\[ \tau_{\tilde{x}} : B(x,r) \to X, y \mapsto \tilde{y},\]
where $T^n (\tilde{x}) = x$, $\tilde{y}$ is given by expansiveness and $B(x,r)$ is the open ball of radius $r$ with center $x$. Furthermore,  $d(\tau_{\tilde{x}}(y),\tau_{\tilde{x}}(y')) \leq c_n(d(y,y'))$.
 
We now introduce some notation. We will write $a \asymp b $  and $a \ll b$ whenever there exists $C\leq 1$ such that $C^{-1} b \leq a \leq Cb$ and $a \leq Cb$, respectively. Moreover, we will refer to $C$ as the \emph{implicit constant} in the estimate.  

{
\begin{example} \label{ex:ruelle-expanding}
We now recall the definition of expanding maps from \cite{Ruelle--The-Thermodynamic-Formalism-For--CMP1989}. That is, for a compact metric space $(\Omega,d)$,
the map $T: \Omega \to \Omega$ is \emph{$(\lambda,r)$-Ruelle-expanding} if there exists $\lambda \in (0,1)$ and $r> $ such that for any $x, {y}, \tilde{x} \in \Omega$  with $d(x, {y})<r$ and $T(\tilde{x})=x$, there exists a unique $\tilde{y}\in \Omega$  with $T(\tilde{y})={y}$ and $d(\tilde{x}, \tilde{y})<\lambda r$. Furthermore,  $ d(\tilde{x}, \tilde{y}) \leq  \lambda d(x,y) $.

We now remark that it easily can be shown that $T^n$ is Ruelle-expanding with parameters $(\lambda^n,r)$.  Or in other words, $T$ is Ruelle expansive with contraction rate $c_n(t) = t \lambda^n$. The relevance of the class of Ruelle expanding maps stems from the simple fact that subshifts of finite type as well as expanding, local diffeomorphisms on closed manifolds are in this class, and that the preimage structure, in general, allows to avoid the construction of Markov partitions. For an example defined on the Sierpiński gasket, see Section \ref{subsec-sierpinski}.
\end{example}
}

\begin{example} \label{ex:Manneville}
On the other hand, the family of Pomeau-Manneville maps as defined in \cite{Liverani-Saussol-Vaienti--A-Probabilistic-Approach-To--ETDS1999} provides examples of Ruelle expansive maps which are not Ruelle expanding. Recall that this family is defined by, for  
$0 < \delta < 1$,  
\[ T: [0,1] \to [0,1], 
x \mapsto 
\begin{cases}
 x(1 + (2x)^\delta) &: x \in [0,1/2]\\
 2x -1 &: x\in (1/2,1]
\end{cases}
\]
In order to show that $T$ is Ruelle expansive, note that for $x\in [1/2,1]$, the $n$-th inverse branch towards the neutral fixed point in $0$ satisfies $x_n \asymp  n^{-1/\delta}$ (see, e.g.  \cite{Sarig--Subexponential-Decay-Of-Correlations--IM2002}). Therefore, if $x \in (0,1/2)$ is of distance of order $k^{-1/\delta}$, then its $n$-th inverse branch towards $0$ is of order $(n+k)^{-1/\delta}$. By identifying $0$ and $1$, one then obtains a continuous map on the circle which is expansive with respect to $c_n(t) \asymp (n+ t^{-\delta})^{-1/\delta}$.
\end{example}

 The next ingredient is a control of the regularity of the potential through its \emph{modulus of continuity} as defined in \cite{Klo}. A function $\omega : [0,1] \to [0,\infty ) $ is referred to as  a modulus of continuity if $\omega$ is a continuous, increasing and  concave function such that $\omega(0)  = 0$. One then refers to  $ f:  \Omega \to \R$ as $\omega$-Hölder if there is $C_f> 0$ with  
\[
|f (x) - f (y)| \leq  C_f \omega(d(x, y)) \quad \forall x, y \in \Omega. 
\]
For example, if  $\omega(t) := t^\alpha$ for $\alpha \in (0,1]$,
then the $\omega$-Hölder property corresponds to the classical  $\alpha$-Hölder property. However, as we are also interested in potentials of lower regularity, we will consider functions with a modulus of continuity of the form, as introduced in \cite{Klo}, 
\[ \omega_{\alpha, \beta \textrm{log}} (t) = \frac{C t^\alpha}{(\log t_0 - \log t)^\beta }, \]
for $C,\alpha, \beta > 0$ and $t_0$ sufficiently large such that $\omega_{\alpha, \beta \textrm{log}}$ is concave on $[0,1]$. We now analyse the interplay 
between the contraction and  $\omega_{\alpha, \beta \textrm{log}}$ with respect to the Birkhoff sum
\[a_n:= a + a\circ T + \cdots + a\circ T^{n-1} \] 
for $a : \Omega \to \R$ and $n \in \N$. As we have to show that the thermodynamical quantities are continuous with respect to perturbations of $a$, we now take special care of constants which depend on $a$.    
If $T$ is Ruelle expanding (that is, $c_n(t)  \leq D s^n t$ for some $s\in (0,1)$) and $a$ is $\omega_{0,\beta \textrm{log}}$-Hölder with $\beta > 1$, then, for $x,y$ with $d(x,y)< r$ and $\tilde x \in  T^{-n}(\{x\})$ and $\tilde y := \tau_{\tilde x}(y)$,  
\begin{align}
\label{eq:distortion-expanding}
|a_n (\tilde x) - a_n (\tilde y)| 
& \leq C_a \sum_{j=1}^{n} \omega_{0,\beta \textrm{log}} (c_j(d(x,y)))   
\\ 
\nonumber
& \leq C_a \sum_{j=1}^{n} \frac{C}{(\log (t_0/D)  + |\log d(x,y))| + j |\log s |  )^\beta}  \\
\nonumber
& \leq \frac{C_a}{\beta |\log s|} \frac{C}{(\log (t_0/D) + |\log s | - \log d(x,y))    )^{\beta -1}  }  \\
\nonumber & \ll {C_a}  \omega_{0,(\beta -1) \textrm{log}} ( d(x,y)),  
\end{align}
where the implicit constant in $\ll$ only depends on $D$ and $s$. On the other hand, if $c_n(t) \leq D t(1 + n t^{1/\beta})^{-\beta}$ and $a$ is $\omega_{\alpha, 0 \textrm{log}}$-Hölder for some $\alpha,\beta > 0$ with $\alpha > 1/\beta$, then 
\begin{align}
\nonumber |a_n (\tilde x) - a_n (\tilde y)| &
\leq \sum_{j=1}^{n} C_a D^\alpha d(x,y)^\alpha ( 1 + j (d(x,y))^{1/\beta} )^{-\alpha \beta} \\
\label{eq:distortion-expansive}
& \leq \frac{ C_a D^\alpha}{\alpha \beta -1} d(x,y)^{\alpha - 1/\beta} \ll C_a \omega_{\alpha - 1/\beta, 0 \textrm{log}} (d(x,y)),
\end{align}
where the implicit constant in $\ll$ only depends on $D$, $\alpha$ and $\beta$. In particular, it follows under both hypotheses that, for 
$x,y \in \Omega$ with  $d(x,y) < r$, 
\begin{align*}
\left| \frac{\mathcal{L}^n_a (1)(x)}{\mathcal{L}^n_a (1)(y)}-1 \right|
& \leq \frac{1}{{\mathcal{L}^n_a (1)(y)}}\sum_{T^n(\tilde x)=x} \left| e^{a_n(\tilde x)} -  
e^{a_n(\tau_{\tilde x}(y))} \right| \\
&
\leq
\sup_{T^n(\tilde x)=x}   \left| e^{a_n(\tilde x)  - a_n(\tau_{\tilde x)}(y))}  -1 \right| \\
&\ll \left(e^{C_a} - 1\right)
\begin{cases}
\omega_{0, (\beta -1) \textrm{log} }(d(x,y)) &: T 
\textrm{ expanding}\\      
\omega_{\alpha - 1/\beta, 0 \textrm{log} }(d(x,y)) &: T \textrm{ expansive}.      
     \end{cases}
\end{align*} 
In particular, $\mathcal{L}^n_a (1) (x) \asymp \mathcal{L}^{n}_a (1) (y)$, provided that $d(x,y)<r$. {  In order to have a more streamlined notation at hand, provided that $\alpha,\beta$ are known, set
\begin{equation} \label{def:omega-dagger}
\omega^\dagger(t) :=
 \begin{cases}
 \omega_{0, (\beta -1) \textrm{log} }(t) &: T
\textrm{ expanding}\\
\omega_{\alpha - 1/\beta, 0 \textrm{log} }(t) &: T \textrm{ expansive}.
     \end{cases}
\end{equation}}
In order to extend this estimate to any $x,y \in \Omega$, we recall that $T$ is called \emph{topologically mixing} if for all open sets $U,V \subset \Omega$ there exists $k\in \mathbb{N}$ such that $T^{-n}(U) \cap V \neq \emptyset$ for all $n > k$. In particular, as the inverse branches of $T$ contract with rate $c_n$, it follows that there exists $p\in \mathbb{N}$ such that for each pair $x,y$, there exists at least one element in $T^{-p}(\{x\})$ which has at most distance $r$ to $y$. Hence, it follows for $n \geq 0$ and $\mathfrak{m}_{k}:= \max_x  \hbox{card}(T^{-p} (\{x\}))$ that
\begin{align*}
e^{p \min a } \mathcal{L}^{n}_a (1) (y) & \leq  e^{C_a} \mathcal{L}^{n+p}_a (1) (x)  \leq e^{C_a} \| \mathcal{L}^{p}_a (1) \|_\infty   \mathcal{L}^{n}_a (1) (x) \\
& \leq e^{C_a} e^{k_0 \max a }  \mathfrak{m}_{p}  \mathcal{L}^{n}_a (1) (x)
\end{align*}
As $a$ is $\omega$-Hölder and the $\omega$-diameter of $\Omega$ is smaller than 1, we obtain that 
%\begin{equation}\label{eq:comparability of Ln} 
\[ \mathcal{L}^n_a (1) (x)  \leq e^{(k_0 +1)C_a} \mathfrak{m}_{k_0} \mathcal{L}^{n}_a (1) (y) \]
%\end{equation}
for all $x,y \in \Omega$ and  $n \in \mathbb{N}$. Observe that this implies that the limit 
 \[ \rho_a := \lim_{n\to \infty} \sqrt[n]{\mathcal{L}^n_a (1) (x) } \]
exists, by almost submultiplicativity, and is independent of $x$. 
 
A first corollary of these estimates is the following result. As the proof of Proposition 3.1 in \cite{Kloeckner-Lopes-Stadlbauer--Contraction-In-The-Wasserstein--N2015} applies in verbatim, we do not give the proof in here.  
 
\begin{proposition}\label{prop:eigenfunction}
Assume that $T$ is a topological mixing and Ruelle expansive map.
\begin{enumerate}
 \item If $c_n(t)  \ll s^n t$ for some $s\in (0,1)$ and $a$ is  $\omega_{0,\beta \textrm{log}}$-Hölder continuous for some $\beta > 1$, then there exists a $\omega_{0, (\beta -1)\textrm{log}}$-Hölder continuous, strictly positive function $h_a: \Omega \to \mathbb{R}$ such that $\mathcal{L}_a(h_a) = \rho_a h_a$. 
 \item If $c_n(t) \ll t(1 + n t^{1/\beta})^{-\beta}$ and $a$ is $\omega_{\alpha, 0 \textrm{log}}$-Hölder continuous for some $\alpha,\beta > 0$ with $\alpha > 1/\beta$, then  there exists a $\omega_{\alpha - 1/\beta,0\textrm{log}}$-Hölder continuous, strictly positive function $h_a: \Omega \to \mathbb{R}$ such that $\mathcal{L}_a(h_a) = \rho_a h_a$. 
\end{enumerate}
Moreover, if $C_a$ refers to the $\omega$-Hölder coefficient of $a$, we have, in both cases, with respect to implicit constants only depending on $T$, that 
$h_a(x)/h_a(y) \ll e^{p C_a}$ for all $x,y \in \Omega$. Moreover, if $d(x,y) < r$, then
$|h_a(x)/h_a(y) -1| \ll  e^{C_a}  \omega^\dagger (d(x,y))$ { (for the defintion of $\omega^\dagger$, see \eqref{def:omega-dagger})}.
\end{proposition}
 
As an immediate application of Proposition \ref{prop:eigenfunction}, one obtains that $\mathcal{L}^n_a(1)$ behaves asymptotically like $\rho_a^n$ as  
$ \mathcal{L}^n_a(1) \asymp e^{\pm p C_a} \mathcal{L}^n_a(h_a) = e^{\pm p C_a} \rho_a^n h $.
Moreover, for $\rho_a$ and $h_a$ as above, it follows for $\overline{a} := a + \log h - \log h \circ T - \log \rho_a$ that
\[\mathcal{L}^n_{\overline{a}} (f) =  \rho^{-n} \mathcal{L}^n_a(f h/h \circ T^{n}) = \frac{\mathcal{L}^n_a(fh)}{\rho_a^n h} \]
for any $f:\omega \to [0,\infty)$ and $n \in \mathbb{N}$. In particular, $\mathcal{L}_{\overline{a}} (1) = 1$. 
In other words, $\overline{a}$ is a \emph{normalized} potential.  

We now employ $\overline{a}$ in order to define a sequence of probabilities on $\Omega$ by
\[m_x^n := \sum_{T^n(z)=x} e^{\overline{a}_n}\delta_z = (\mathcal{L}_{\overline{a}}^n)^\ast(\delta_x),\]
where $\delta_z$ refers to the Dirac measure {supported}  on $z \in \Omega$ and $(\mathcal{L}_{\overline{a}}^n)^\ast$ to the dual of the action of $\mathcal{L}_{\overline{a}}^n$ on the space of continuous functions. Note that $m_x^n(\Omega)=1$ as $\overline{a}$ is normalized. In order to determine the asymptotics and the dependence on $x \in \Omega$ we now make use of couplings. Recall that a probability measure $P$ on $\Omega \times \Omega$ is referred to as a \emph{coupling} of the probabilities $m_1,m_2$ on $\Omega$ if $m_i = \pi_i^\ast(P)$ where $\pi_i$ is the canonical projection on the $i$-th coordinate and $\pi_i^\ast$ the corresponding action on measures. Furthermore, for {an} $\omega$-Hölder continuous function $f$, we write $\|f\|_\omega := \|f\|_\infty + \textrm{Höl}_\omega(f)$, for
\[\textrm{Höl}_\omega(f) = \sup \{ |f(x)-f(y)|/\omega(d(x,y)) : 0 < d(x,y) < r \}.\] 
We now construct couplings of $m_x^n$ and $m_x^n$ which show a polynomial contraction. Moreover, we take special care on the continuity of the involved constants with respect to $\|a\|_\omega$.

\begin{theorem} \label{theo:main-contraction-result}
 Assume that $T$ is a Ruelle expansive and topologically mixing map of the compact metric space $(\Omega,d)$ of diameter 1 and that $a$ is $\omega$-Hölder continuous. 
\begin{enumerate} 
\item Assume that $c_n(t)  \ll s^n t$ for some $s\in (0,1)$ and that $\omega =  \omega_{0,\beta \textrm{log}}$ for some $\beta > 1$. Then there exists $\kappa_a \geq 1$ depending on $a$ and $T$ such that for any $n > 0$ there exists a  
a coupling $P^n_{x,y}$ of $m_x^n$ and $m_y^n$ such that 
 \[\int d(\tilde{x},\tilde{y}) d  P^n_{x,y} (\tilde{x},\tilde{y}) \leq \kappa_a n^{1-\beta}\]
If $\beta > 2$
then there exist $\Delta_0, n_0\geq 1$ depending on $a$, and $\kappa$ depending on $T$ and $\beta$ such that for all $x,y \in \Omega$, $t \geq d(x,y)$, $\Delta_{t} := \Delta_0 + \log t /\log s$,  the following holds. 
There exists {a} coupling of  $P^n_{x,y}$ of $m_x^n$ and $m_y^n$ with
 \[ 
 \int d(\tilde{x},\tilde{y}) dP^n_{x,y}(\tilde{x},\tilde{y}) 
 \leq  {\kappa}_a
\left( \Delta_t+ \frac{n}{2}\right)^{1 - \beta},
\] 
provided that $n \geq n_0$ and $n$ is sufficiently large such that  $s^n(\Delta_t + n/2)^{\beta-1} \leq 1$. Moreover, for $n\geq n_0$,
\[
\int  \omega_{0, (\beta -1) \textrm{log}}
(d(\tilde{x},\tilde{y})) dP^n_{x,y}(\tilde{x},\tilde{y})  \leq {\kappa}_a   {\left( \frac{\log d(x,y)}{\log s} +  \frac{\Delta_0 + n}{2} \right)^{1 - \beta} }.
\]

 \item Assume that  $c_n(t) \ll t(1 + n t^{1/\beta})^{-\beta}$ and $\omega = \omega_{\alpha, 0 \textrm{log}}$ for some $\alpha,\beta > 0$ with $\alpha \beta > 1$ and $\beta > 1$.  
 Then there exists $\kappa_a \geq 1$ depending on $a$ and $T$ 
 such that for any $n > 0$ there exists {a} coupling $P^n_{x,y}$ of $m_x^n$ and $m_y^n$ such that
\[ \int d(z,\tilde{z}) d  P^n_{x,y} (z,\tilde{z}) \leq \kappa_a n^{1-\alpha\beta}.\] 
If, in addition, $\alpha \beta > 2$  then there exist $\Delta_0,n_0\geq 1$ depending on $a$ and $\kappa\geq 1$ depending on $T$ and $\alpha,\beta$ such that for all $x,y \in \Omega$, there exists  a coupling $P^n_{x,y}$ of $m_x^n$ and $m_y^n$ such that for all $n \geq  n_0$, 
 \[ \int d(\tilde{x},\tilde{y}) dP^n_{x,y}(\tilde{x},\tilde{y}) \leq {\kappa}_a   {\left( d(x,y)^{-1/\beta} +  \frac{\Delta_0 + n}{2} \right)^{1 - \alpha\beta} }. \]
Moreover, for $n\geq n_0$,
\[
\int  \omega_{\alpha - 1/\beta, 0 \textrm{log}}
(d(\tilde{x},\tilde{y})) dP^n_{x,y}(\tilde{x},\tilde{y})  \leq {\kappa}_a   {\left( d(x,y)^{-1/\beta} +  \frac{\Delta_0 + n}{2} \right)^{1 - \alpha\beta} }.
\]
\item In both cases, the constants $\kappa_a,\Delta_0$ and $n_0$ might be chosen in such way that they vary continuously in $\|a\|_\omega$.  
\end{enumerate}
\end{theorem}

Before giving the proof, we would add some remarks and observations to this theorem. 

\begin{remark}
We did not include the case of an expanding map and Hölder continuous potentials, as this is Ruelle's Perron-Frobenius theorem as given, e.g., in \cite{Ruelle--The-Thermodynamic-Formalism-For--CMP1989}. Recall that in this setting, the above decay is exponential. However, we would like to note that the method of proof given below also is applicable. In fact, it suffices to replace the term $(\Delta_t + n)^{-\gamma}$ by $t \lambda^n$ and apply basic estimates for the geometric series.
\end{remark}

\begin{remark} 
Firstly, if $1< \beta \leq 2$ in the expanding case or $1< \alpha\beta \leq 2$ in the expansive case, respectively, then we were only able to obtain a decay rate  of $\int d(\tilde{x},\tilde{y}) dP^n_{x,y}(\tilde{x},\tilde{y})$ which is independent of $d(x,y)$. Secondly, if $ \beta > 2$ or $\alpha\beta > 2$, respectively, then the decay rate with respect to the modified distance $\omega_{\cdot\cdot} \circ d$, that is of $\int \omega_{\cdot\cdot} \circ d(\tilde{x},\tilde{y}) dP^n_{x,y}(\tilde{x},\tilde{y})$ is a function of $n$ and $d(x,y)$. However, in case of an expanding map and weak regularity, the decay with respect to $d$, that is of $\int d(\tilde{x},\tilde{y}) dP^n_{x,y}(\tilde{x},\tilde{y})$, only becomes effective after a certain waiting time, which also depends on $n$ and $d(x,y)$.

Note that these different types of decay have several interpretations in terms of the action of $\mathcal{L}_a$ on Lipschitz and $\omega_{\cdot\cdot}$-Hölder functions or in terms of the Wasserstein distance as defined below (Def. \ref{def:wasserstein}). For example, the asymptotics of $\mathcal{L}_a$ given in Corollaries \ref{cor:decay-to-invariante-measure} and \ref{cor:decay-of-Hoelder-coefficients} are immediate consequences of these two types of decay. In terms of Wasserstein distances, the first case provides us with a uniform decay of 
$W_d((\mathcal{L}_a^n)^\ast(\mu_1),(\mathcal{L}_a^n)^\ast(\mu_2))$ for arbitrary probability measures $\mu_i$. The second case then refines this result by providing decay and a modulus of continuity of the map $\mu \mapsto (\mathcal{L}_a^n)^\ast(\mu)$ with respect to  $W_{\omega_{\cdot\cdot}\circ d}$.

Unfortunately, the authors were not able to verify if it is possible 
to extend the stronger version of decay to all parameters. 
In this case, the transition at 2 only would be a consequence 
of our method of proof. However, a transition at this parameter 
is plausible as it corresponds to the known transition in 
the Manneville-Pomeau family for the parameter $\delta = 1/2$. 
For the transition, we refer to 
\cite{Gouezel--Sharp-Polynomial-Estimates-For--IJM2004} 
whereas it follows from Remark 
\ref{rem:examples-for-decay:Manneville-Pomeau-and-Dyson} 
below, that $\delta < 1/2$ corresponds to $\alpha\beta > 2 $.
\end{remark}

\begin{remark}
{  Due to the similarity of the statements, we discuss the differences to the results by Kloeckner in  \cite{Klo}. First of all, we point out that the results in in  \cite{Klo}  require that the potential $a$ is \emph{flat} with respect to a family of couplings $\{P^n_{x,y}\}$ of $m_x^n$ and $m_y^n$, that is the estimates \eqref{eq:distortion-expanding} or \eqref{eq:distortion-expansive} hold without the restriction that $d(x,y)< r$, but only with respect to $P^n_{x,y}$-almost every pair $(\tilde{x},\tilde{y})$ and $n \in \N$. Kloeckner then shows for example in Theorem E in there, that a for a flat, normalized and $\omega_{0, \beta \textrm{log}}$-Hölder potential with $\beta > 1$ and a uniformly expanding map $T$, $\|\mathcal{L}_a^n(f) - \int f d\mu\|_\infty \ll n^{1-\beta}$, where  $\mu$ is the unique $(\mathcal{L}_a)^\ast$-invariant measure.

However, if $r$ is smaller than the diameter of $\Omega$, flatness is a non-canonical condition due to the following technical reason.
If $Q$ is a measure on $\Omega^2$ such that $Q(A \times \Omega)\leq m_x^n(A) $ and $Q(\Omega \times A)\leq m_y^n(A)$ for all measurable sets $A$ in $\Omega$, then
$Q$ is referred to as a \emph{subcoupling} of $m_x^n$ and $m_y^n$. Moreover, any subcoupling can be extended to a coupling by adding the product of the remaining mass  (see \eqref{eq:extension-to-coupling} in the proof below). The relevance of this extension relies on the fact that there is a canonical construction of subcouplings such that $a$ is flat. However, if \eqref{eq:distortion-expanding} or \eqref{eq:distortion-expansive} do not hold globally, then the potential $a$ no longer necessarily is flat with respect to the extension. In particular, in this situation, there is no canonical way to construct couplings such that a given potential is flat. However, flatness is not necessary for having a contraction result at hand as shown in Theorem \ref{theo:main-contraction-result}).

A well studied class of examples, where  $r$ might be chosen to be strictly bigger than the diameter of $\Omega$, are maps with full branches. That is, if a map $T$ is a local homeomorphism equipped with a finite cover $\mathcal{U}$ of $\Omega$ into closed sets such that, for all $A \in \mathcal{U}$, $T(A) = \Omega$, $T(\partial A)$ is nowhere dense and $T$ restricted to the interior of $A$ is injective, then $T$ is said to have \emph{full branches}. Well known examples in this class are local homeomorphisms of the circle, a full shift over a finite number of symbols or linear endomorphisms of the torus, that is, the action of an invertible matrix with integer coefficients on the $d$-dimensional torus.

In particular, Part (1) of Theorem \ref{theo:main-contraction-result} above is partially stronger as we only require a local version of flatness. That is, our result is without further adaptions applicable to topologically mixing subshifts of finite type or the map defined in Section \ref{subsec-sierpinski}. Moreover, the method in here allows to obtain an estimate for the decay of  $\int d(x,y) dP^n_{x_1,x_2}$ depending on $d(x_1,x_2)$ for $\beta > 2$.}

Furthermore, Theorem  \ref{theo:main-contraction-result} also has analogies and applications to Theorem 4.1 in \cite{Klo}. As we already discussed the expanding case above, we will focus on the expansive, polynomially decaying case. In there, for a given sequence of couplings $M^n_{x_1,x_2}$ of probability measures $m^n_{x_i}$ supported  on $T^{-n}(\{x_i\})$ and a flat and normalized potential $a$, Kloeckner constructs couplings $P^n_{x_1,x_2}$ of  $(\mathcal{T}_a^n)^\ast(\delta_{x_i}) $, for $\mathcal{T}_a (f)(x) := \int f dm^n_{x}$ with the following property. If
$$\int d(x,y) dM^n_{x_1,x_2} \ll  (n+ d(x_1,x_2)^{-1/\alpha})^{-\alpha}$$
for some $\alpha > 0$,
then $P^n_{x_1,x_2}$ shows the same decay. However, as  $\mathcal{T}_a = \mathcal{L}_{\overline{a}}$,  Theorem \ref{theo:main-contraction-result} also can be applied and gives the same result for $\alpha\beta > 2$.

Finally, we also would like to remark that we found {an error}
in the proof of the main contraction result in \cite{Klo}
as Lemma 2.14 in there does not hold. 
After pointing out this problem to Kloeckner, 
he provided a partial solution by imposing a stronger 
but still sufficiently weak hypothesis (see \cite{Kloeckner--An-Optimal-Transportation-Approach--ETDS2022}). 
However, due to global flatness, his results are not applicable in our setting.
\end{remark}

\begin{proof} In order to present the main argument for the contraction, we introduce the following objects and notations. First of all, observe that it follows from topological mixing and the uniform contraction of the inverse branches that there exists $p\in \mathbb{N}$ such that for any pair $U,V \in \mathcal{U}$, there exist $u,v \in \Omega$ with $T^p(u) \in U$, $T^p(v) \in V$ such that $\diam (\tau_u(U) \cup \tau_v(V)) < r$, where $\tau^p_u,\tau^p_v$ refer to the inverse branches of $T^p$ given by the orbits of $u,v$.  
For $x \in U$ and $y \in V$, we now define $u_{x,y} := \tau_u(x)$ and $v_{x,y} :=\tau_v(y)$. As $d(u_{x,y},v_{x,y}) < r$ it follows that for each $z$ with $T^n(z) = u_{x,y}$, $v_{x,y}$ is in the domain of $\tau_z^n$.

Moreover, we refer to a finite measure $Q$ on $\Omega^2$ as a \emph{subcoupling} of two Borel probability measures $m_1,m_2$ on $\Omega$ if $\pi_i^\ast(Q)(A) \leq m_i(A)$ for all Borel sets $A$ and $i=1,2$. Finally,  in order to keep the notation simple, we will use $\kappa$ for a constant which does not depend on $a$, and $\omega^\dagger$ for $\omega_{0, (\beta -1) \textrm{log} }$ or $     
\omega_{\alpha - 1/\beta, 0 \textrm{log} }$, depending on the application of \eqref{eq:distortion-expanding} or \eqref{eq:distortion-expansive}, respectively.

\medskip
\noindent\textsc{Step 1.} We begin with the construction of the underlying subcouplings. For $x,y \in \Omega$ and $k \geq p$, set $\varphi_{z_1,z_2}^{k}:= \min \left\{ e^{\overline{a}_k(z_i)}: i=1,2 \right\}$ and
\[
Q^{k}_{x,y} :=
\begin{cases}
\sum_{T^{k}(z)=x} 
\varphi_{z,\tau_z^k(y)}^{k} 
\delta_{z} \otimes \delta_{\tau_z^k(y)} & : d(x,y) < r,
\\
\sum_{T^{k-p}(z)=u_{x,y}} 
\varphi_{z,\tau_z^{k-p}(v_{x.y})}^{k} 
\delta_{z} \otimes \delta_{\tau_z^{k-p}(v_{x.y})} & : d(x,y) \geq r.   
\end{cases}
\] 
Note that $z = \tau_z^k(x)$ in the first case and that $z = \tau_z^{k-p}(u_{x.y})$ in the second case. In particular, there is a symmetry in the above definition by choosing preimages with respect to choosing the preimages of $x$ or $y$ and $u_{x.y}$ or $v_{x.y}$, respectively.

As $T$ is expansive, it follows in the first case, for $Q^{k}_{x,y}$-a.e. $z_1,z_2$,  that $d(z_1,z_2) \leq c_k(d(x,y))$. By the same argument, one obtains for the second case that 
\[d(z_1,z_2) \leq c_{k-p}(d(u_{x,y},v_{x,y}) \leq c_{k-p}(r) \leq  c_{k-p}(d(x,y)) \ll c_k(d(x,y)),\] 
where the implicit constant in the estimate does not depend on $k$ as $p$ is fixed. That is, $d(z_1,z_2)  \ll c_{k}(d(x,y))$ for $Q^{k}_{x,y}$-a.e. $z_1,z_2$ and all $x,y \in \Omega$ and $k> p$.

\medskip
\noindent\textsc{Step 2.}
We now determine $\pi_i^\ast Q_{x,y}^k$ and obtain lower bounds for $Q_{x,y}^k(\Omega^2)$. If $d(x,y) < r$, then the preimages of $x$ and $y$ come in pairs. 
Hence, it follows from \eqref{eq:distortion-expanding}, \eqref{eq:distortion-expansive} and Proposition \ref{prop:eigenfunction} for $f:\Omega \to [0,\infty)$ that
\begin{align*}
0 & \leq \int f dm_x^k - \int f(\tilde{x}) dQ_{x,y}^k(\tilde{x},\tilde{y})  
 = \sum_{T^k(z)=x} f(z) \left(e^{\overline{a}_k(z)}   - \varphi_{z,\tau_z^k(y)}^{k} \right)  \\  
&   = \sum_{T^k(z)=x} f(z) e^{\overline{a}_k(z)} \left( 1 - \min_{t = z,\tau_z^k(y)} e^{\overline{a}_k(t) - \overline{a}_k(z)}  \right) \\
& \leq \int f dm_x^k \left( 1 - e^{-\kappa C_a \omega^\dagger(d(x,y))}\left( 1 - \kappa e^{C_a} \omega^\dagger(d(x,y))  \right)^2  \right)\\
& \leq \int f dm_x^k \left( 
1 - e^{-\kappa C_a \omega^\dagger(d(x,y))} + 2 \kappa e^{C_a} \omega^\dagger(d(x,y)) \right) \ll e^{\kappa C_a} \omega^\dagger(d(x,y)).
\end{align*}
It follows now from the symmetry in the construction of $Q_{x,y}^k$ that the same estimate holds with respect to $m_x^k$. Hence, $Q_{x,y}^k$  is a subcoupling of $m_x^k$ and $m_y^k$. Furthermore, it follows from the above that
\[ 
0 \leq 1 -  Q_{x,y}^k (\Omega^2) = 1 - \int 1 dQ_{x,y}^k 
%\leq 1 -  e^{-\kappa C_a \omega^\dagger(d(x,y))} + 2 \kappa e^{C_a} \omega^\dagger(d(x,y))
\ll e^{\kappa C_a} \omega^\dagger(d(x,y)). \]
Note that these estimates only are effective for $x,y$ close to each other. However, for $x,y$ in arbitrary position with $d(x,y)< r$, the estimates in  \eqref{eq:distortion-expanding}, \eqref{eq:distortion-expansive} imply that    
$ Q_{x,y}^k (\Omega^2) \gg e^{-\kappa C_a}$.  

If $d(x,y) \geq r$, then it follows from the same argument that $Q_{x,y}^k$  is a subcoupling of $m_x^k$ and $m_y^k$. Moreover, by  \eqref{eq:distortion-expanding}, \eqref{eq:distortion-expansive} and Proposition \ref{prop:eigenfunction},
\begin{align*}
 Q_{x,y}^k(\Omega^2) & \geq e^{\min\{ \overline{a}_p(u_{x,y}),\overline{a}_p(v_{x,y})\}} e^{- \kappa C_a \omega^\dagger(r)} \mathcal{L}_{\overline{a}}^{k-p}(1) \\
 & \gg \rho_a^p  e^{p \min_z a(z)} e^{-2pC_a}.  
\end{align*}
Finally, it follows from  $\mathcal{L}_{a}^p(h_a) = \rho_a^p h_a$ that $\rho_a^p \gg 
e^{p \min_z a(z) - 2 pC_a}$. Hence, there exists $\kappa \geq  1$, 
only depending on $T$, such that  
\begin{eqnarray} \nonumber 
  Q_{x,y}^k(\Omega^2) & \geq &  
  \begin{cases}
   \frac{1}{\kappa} e^{\kappa(\min_{z\in \Omega} a(z) - C_a)} & : d(x,y) \geq r \\
   \max\left\{ \frac{1}{\kappa} e^{-\kappa C_a} , 1 -  \kappa e^{\kappa C_a} \omega^\dagger(d(x,y)) \right\}  &:   d(x,y) <  r,
  \end{cases} \\
  & \geq & \max  \left\{ \theta_a^{-1}, 1 - \theta_a \omega^\dagger(d(x,y)) \right\} 
  \label{eq:global-estimate-for-Q-from-below}
\end{eqnarray}  
where $\theta_a^{-1}:= \min \left\{ \kappa^{-1} e^{\kappa(\min_{z\in \Omega} a(z) - C_a)},\kappa^{-1} e^{- \kappa C_a)} \right\}$.

\medskip
\noindent\textsc{Step 3.} We now extend $Q^k_{x,y}$ and give the main estimate for the contraction statement of the theorem. In order to do so, 
set $A_{-1} = \{(z_1,z_2) \in \Omega^2: d(z_1,z_2)\geq r\}$ and, for $j \geq  0$, 
\begin{align*}
 A_j :=  \left\{ (z,\tau_{z}^j(\tilde{z})) ,\in \Omega^2: d(T^j(z),\tilde{z}) < r,d(T^{j+1}(z),T(\tilde{z})) \geq r \right\}. 
\end{align*}
Note that it follows from the construction that the support of $Q^k_{x,y}$ is contained in $A_{j+k}$ for $(x,y) \in A_j$ for $j \geq 0$, and in $A_{k-p}$
for $(x,y) \in A_{-1}$, respectively. Furthermore, as it easily can be verified, 
$Z_{x,y}^k = Q_{x,y}^k + R_{x,y}^k$ is a coupling of $m^k_x$ and $m^k_y$, for  
\begin{equation} \label{eq:extension-to-coupling}
R_{x,y}^k : = \frac{1}{1 - Q_{x,y}^k(\Omega^2)} (\pi_1^\ast Q_{x,y}^k - m^k_x)\otimes (\pi_2^\ast Q_{x,y}^k - m^k_y).
\end{equation}
Finally, it follows from the fact that $T^k$ is a local homeomorphism that the support of $R_{x,y}^k$ is contained in $\bigcup_{i<j+k}A_i$ for $x,y \in A_j$, $j\geq 0$, and in $\bigcup_{i<k}A_i$ for $x,y \in A_{-1}$, respectively.  

We are now in position to prove the main estimate. In order to do so, set $t:= d(x,y)$ and assume that $n \in \mathbb{N}$, that $B_{j} \subset A_j$ for $j = 0,\ldots n$, $B_{-1} \subset \Omega^2$, and that $B_{\ast} \subset \bigcup_{j\geq n}A_j$ for $t<r$ and $B_{\ast} = \emptyset$ for $t \geq r$, respectively. We now construct $k$ and $B^+_j$ for  $j =-1, \ldots, n+k$ and $B^+_\ast$ with the these properties with respect to $n+k$. That is, we consider 
\begin{align*}
B^+_{-1}  &:= \bigcup_{(z_1,z_2)\in B_j+k: j= -1,\ldots m, \ast } \supp(R^{k}_{z_1,z_2}), 
\quad B^+_{k-p}  := \bigcup_{(z_1,z_2)\in B_{-1}} \supp(Q^{k}_{z_1,z_2}), \\
B^+_\ast & := \bigcup_{(z_1,z_2)\in B_\ast}\supp(Q^{k}_{z_1,z_2}), 
\quad B^+_{j} := \emptyset\,  \hbox{ for } j =0, \ldots k-1, j \neq k-p,  
.\\
B^+_{j} &:= \bigcup_{(z_1,z_2)\in B_{j-k}} \supp ( Q^{k}_{z_1,z_2}) \, \hbox{ for } j =k, \ldots n+k.
\end{align*}
As a consequence of the construction of $Q^k$, we have that $B^+_j \subset A_j$ for $j=0, \ldots n+k$ and $B^+_\ast \subset \bigcup_{j\geq n+k}A_j$ or $B^+\ast =\emptyset$, depending on the value of $t$. 

\medskip
\noindent\textsc{Step 4. Polynomial decay.} 
Choose $n_0$ such that $1 - \theta_a c_{n_0+1}(r) \geq \theta_a^{-1}$ and assume that 
for some function $\Delta:[0,r] \to [1,\infty)$ and $\gamma > 1$,
\[\omega^\dagger \circ c_n(t)  \leq \kappa(\Delta(t)+n)^{-\gamma}.\]
Furthermore, assume that $M$ is a probability measure on $B_\ast \cup \bigcup_{j\leq n} B_{j}$ such that, for some  $\Delta_0 \geq  0$, $\delta>0$,
\begin{align}  \label{eq:estimate-for-the-iteration-polynomial}
M(B_j)  &= 0  \hbox{ for }  j=0, \ldots,n_0,\\
\nonumber M(B_j)  &\leq \delta (\Delta_0 + \Delta(t) + n - j)^{-\gamma} \hbox{ for }  j -1,n_0+1, \ldots  ,n.
\end{align}
For $M^+ := Z^k_{z_1,z_2}dM(z_1,z_2)$ it then follows that $M^+(B^+_{j}) \leq M(B_{j-k})$ for $j \neq -1$, which proves  \eqref{eq:estimate-for-the-iteration-polynomial} with respect to $M^+$, $n+k$ and $j=0, \ldots n+k$. So it remains to construct $k$, $\delta$ and  $\Delta_0$, independent of $n$  such that 
\begin{equation} \label{eq:estimate-to-show}
M^+(B^+_{-1}) \leq \delta (\Delta_0 + \Delta(t) + n + k - j)^{-\gamma}. 
\end{equation}
For ease of notation, set $\Delta_t:= \Delta_0 + \Delta(t)$. Moreover,  choose $n_0$ such that $1 - \theta_a \omega^{\dagger}(c_{n_0+1}(r)) \geq \theta_a^{-1}$ and $n_0 < m_0 < n$.   
By \eqref{eq:global-estimate-for-Q-from-below}, 
\begin{align*}
M^+(B^+_{-1}) 
 \leq & \sum_{j=-1}^{n_0} (1 - \theta_a^{-1}) M(B_j) + 
\theta_a \sum_{j=n_0+1}^n  \omega^\dagger(c_j(r)) M(B_j)  + \theta_a c_n(t) M(B_\ast)\\
 \leq &   
\frac{(1 - \theta_a^{-1}) \delta}{(  \Delta_t + n + 1)^{\gamma}} + 
\sum_{j=n_0+1}^{m_0} \frac{\theta_a \kappa \delta}{((\Delta(r) + j)(  \Delta_t + n  - j))^{\gamma}} \\
&  +  \sum_{j=m_0+1}^{n}
\frac{\theta_a \kappa \delta}{((\Delta(r) + j)( \Delta_t + n  - j))^{\gamma}}
+ \theta_a c_n(t) M(B_\ast)\\
\leq &  \frac{(1 - \theta_a^{-1}) \delta }{(  \Delta_t+ n )^{\gamma}}
+ \frac{\theta_a \kappa \delta}{(  \Delta_t + n  - m_0)^{\gamma}} 
\frac{(\gamma -1)^{-1}}{(\Delta(r) + n_0 )^{\gamma - 1} }
\\ & +  \frac{\theta_a \kappa \delta}{(\Delta(r) + m_0)^{\gamma}} \frac{(\gamma -1)^{-1}}{( \Delta_t-1 )^{\gamma - 1} } + \theta_a c_n(t) M(B_\ast).
\end{align*}
We now choose $m_0$. If $n \leq  \Delta_t -\Delta(r)$, set $m_0 = n$. Otherwise, we choose $m_0$ such that $ \Delta_t + n  - m_0 = \Delta(r) + m_0 \pm 1$.  In  both cases, one obtains that
\begin{align*}
M^+(B^+_{-1})   
\leq &   \frac{(1 - \theta_a^{-1}) \delta}{( \Delta_t + n)^{\gamma}} 
+ \frac{ \frac{2\theta_a \kappa \delta}{\gamma -1}\left( n_0^{1 - \gamma}  + (\Delta_0 -1 )^{1- \gamma}   \right) }{(\Delta_t + \Delta(r) + n)^\gamma} 
%\\ &
+ \frac{\theta_a \kappa}{(\Delta(t)+n)^\gamma}.
\end{align*} 
For $\epsilon > 0$, we may enlarge $n_0$ such that 
$2\theta_a \kappa  n_0^{1 - \gamma}(\gamma - 1)^{-1} < \epsilon$. For $\Delta_0 := k/\epsilon$, it then follows from $k \geq n_0+p$ that 
$2\theta_a \kappa  (\Delta_0 -1 )^{1 - \gamma}(\gamma - 1)^{-1} < \epsilon$. 
Moreover, this choice also ensures that, for $\delta:= \kappa\theta_a /\epsilon$,  
\begin{align*}
M^+(B^+_{-1}) \leq & \frac{\delta (1+\epsilon)^\gamma }{(\Delta_t + n + k)^{\gamma}}
\left( (1 - \theta_a^{-1}) + 2\epsilon  + \frac{\kappa \theta_a}{\delta} \frac{(\Delta_t + n)^\gamma}{(\Delta(t) + n)^\gamma}   \right)\\
\leq &  \frac{\delta (1+\epsilon)^\gamma }{(\Delta_t + n + k)^{\gamma}}
\left( 1 - \theta_a^{-1} + 4\epsilon \right).
\end{align*}
The estimate in \ref{eq:estimate-to-show} then follows e.g. for $\epsilon^{-1} := \max \{  8\theta_a ,   2^{\gamma}\theta_a \}$.  

We are now in position to obtain the asymptotic behaviour through induction. Assume that  
that $n = q (n_0 + p) +  r $ for $q,r \in \mathbb{N}$ and $0 \leq r < n_0 + p$. 
{We} may apply the above iteration $q-1$-times for $k = n_0+p$ and once for $ k = n_0 + p + r$ to the Dirac measure $M_0:= \delta_{(x,y)}$ and $B_\ast^0 := \{(x,y)\}$ if $t:= d(x,y) < r$ and
$B_{-1}^0 := \{(x,y)\}$ and $t:=r$, otherwise. Hence, 
\[ 
dP^n_{x,y}(z,\tilde{z}) :=   dZ^{n_0 + p+ r}_{z_{q-1},\tilde{z}_{q-1} } (z,\tilde{z})  
\cdots Z^{n_0 + p}_{z_1,\tilde{z}_1}(z_2,\tilde{z}_2)  dZ^{n_0 + p}_{x,y}(z_1,\tilde{z}_1)  
\]
is a coupling of $m^n_x$ and $m^n_y$. Moreover, if $B^q_j$ and $B^q_\ast$ refer to the set obtained by induction, it follows that 
\begin{align*}
(\ast) := & \int d(z,\tilde{z}) dP^n_{x,y}(z,\tilde{z}) \\ = & \sum_{j ={-1}}^{n} \int_{B^q_j}    d(z,\tilde{z}) dP^n_{x,y}(z,\tilde{z}) + \int_{B^q_\ast} d(z,\tilde{z}) dP^n_{x,y}(z,\tilde{z}) \\
 \leq &  \sum_{j =-1}^{n}  \frac{c_{j}(r) }{\delta (\Delta_t + n -j)^{\gamma}} + c_{n}(t) M_0 (B_\ast^0).
\end{align*}

\medskip
\noindent\textsc{Step 4. Polynomial decay and weak regularity.} We now give the estimate for $\int \omega^\dagger(d)  dP^n_{x,y}$. Observe that it follows from Step 3 that,  for $-1 < n_0< m$,  
\begin{align*}
\int \omega^\dagger(d)  dP^n_{x,y}
\leq &
  \sum_{j =-1}^{m_0}  \frac{\kappa (\Delta(t) + j)^{-\gamma} }{\delta (\Delta_t + n -j)^{\gamma}} +   \sum_{j =m_0 +1}^{n}  \frac{\kappa (\Delta(t) + j)^{-\gamma} }{\delta (\Delta_t + n -j)^{\gamma}} + 
  \kappa (\Delta(t) + n)^{-\gamma}  M_0 (B_\ast^0)\\
  \leq  & 
  \frac{\kappa (\Delta(t) -1)^{1- \gamma} }{\delta (\gamma -1) (\Delta_t + n -m_0)^{\gamma}} 
  + 
    \frac{\kappa (\Delta(t) + m_0)^{- \gamma} }{\delta (\gamma -1) (\Delta_t  -1)^{\gamma -1}} +    \kappa (\Delta(t) + n)^{-\gamma}. 
\end{align*}
For $m_0$ with $\Delta_t + n -m_0 = \Delta(t)  + m_0$, we hence obtain that 
\[
\int \omega^\dagger(d)  dP^n_{x,y} \leq \frac{\kappa}{\delta(1-\gamma)}
 \left(\Delta(t) + \frac{\Delta_0 + n}{2}   \right)^{-\gamma}. 
\] 

\medskip
\noindent\textsc{Step 5. Expanding maps.} If $c_n(t) \leq \kappa s^n t$, then $\omega^\dagger =  \omega_{0,(\beta -1)\textrm{log}}$. Hence, $\gamma = \beta-1$ and $\Delta(t) = ( - \log t  - \kappa )/\log s$ and, in particular, $\beta > 2$. It then follows by separating the sum  at $m_0 = n/2$ that     
\begin{align} \nonumber  %\label{eq:general-estimate-for-int-d-P}
(\ast) \leq &  \frac{   \frac{\kappa r}{s(1-s)} }{\delta (\Delta_t + n/2)^{\beta -1}}
%\\ &
+  \frac{\kappa r s^{n/2} \Delta_t^{2 - \beta}}{\delta|2 - \beta|}  + \kappa s^n t
%\ll  &
%\frac{1}{\delta\left(\Delta_t + {n-m_0}\right)^{\beta -1 }}  
%+
%\frac{s^{m_0}   \left|\left(
%1+ \frac{n-m_0}{\Delta_t}\right)^{\beta -2 } -  1
%\right| }{\delta \left(\Delta_t+  {n-m_0} \right)^{\beta -2 }}
%  + s^n t.
\end{align}
 Then the middle term is bounded, up to constants depending on $T$ and $\beta$, by $s^{n/2}\Delta_t^{2-\beta}$. Hence, for $n$ with $s^{n/2} (\Delta_t +  {n}/{2})^{\beta -1} \leq 1$, it follows by enlarging $\kappa$, if necessary, that  
\begin{align} \nonumber %\label{eq:estimate-beta-bigger 2}
(\ast) \leq & \frac{\kappa}{\delta}  {\left( \Delta_t + \frac{n}{2}\right)^{1 - \beta}}  
. 
\end{align}
 
\medskip  
\noindent\textsc{Step 6. Expansive maps.} 
If $c_n(t) \leq \kappa t(1 + n t^{1/\beta})^{-\beta} = \kappa( t^{-1/\beta}+ n )^{-\beta} $ and $\omega = \omega_{\alpha, 0 \textrm{log}}$ for some $\alpha> 0$ and $\beta > 1$ with $\alpha \beta  >  1$, then $\gamma = \alpha \beta -1 $ and $\Delta(t) = t^{-1/\beta}$. In particular, $\alpha \beta > 2$. 
Now choose $m_0$ such that $\Delta_0 + t^{-1/\beta} + n - m_0  = t^{-1/\beta} + m_0 $. It then follows as above with  $\Delta_t = \Delta_0 + t^{-1/\beta}$ that
\begin{align*}
(\ast) \leq & \frac{ \kappa(t^{-1/\beta})^{-\beta + 1}}{\delta (\Delta_t + n - m_0)^{\alpha\beta - 1}} +  \frac{\kappa(t^{-1/\beta} + m_0)^{-\beta} }{\delta (\alpha \beta -2) } \Delta_t^{\alpha\beta - 2}
 + \kappa(t^{-\frac1{\beta}} + n)^{-\beta} \\
 \ll & \frac{1}{\left( t^{-1/\beta} +  \frac{\Delta_0 + n}{2} \right)^{\alpha\beta  -1} },
\end{align*}
where we have used that $ \alpha \beta - 1 <   \beta $.

\medskip  
\noindent\textsc{Step 7. The case $\gamma >0$.} Now assume that $\omega^\dagger \circ c_n(t) \leq \kappa(\Delta(t)+n)^{-\gamma}$  for some $\gamma > 0$. Furthermore, assume that $M$ is a probability measure on $B_\ast \cup \bigcup_{j\geq n} B_{j}$ where $n$ is chosen in such way that $\omega^\dagger( c_n(r)) \leq \theta_a^{-2} M(B_{-1})/2$. Then, by the same argument and with the same notation as in Step 4, it follows for $k\in \mathbb{N}$ that
\begin{align*}
\int P^m_{x,y}(B_{-1}^+) dM(x,y) \leq & (1 - \theta_a^{-1}) M(B_{-1}) + \theta_a \omega^\dagger(c_n(r)) (1 - M(B_{-1})) 
\\
\leq  &   (1 - \theta_a^{-1}) M(B_{-1}) + \frac{1}{2}\theta_a^{-1}M(B_{-1}  = \left(1 - \frac{1}{2\theta_a} \right) M(B_{-1}).
\end{align*}   
Hence, in order to be in position to apply the estimate again, it suffices to choose $m$ such that $\omega^\dagger( c_{m-p}(r)) \leq \rho \theta_a^{-2} M(B_{-1})/2$, for $\rho:= 1 - \frac{1}{2\theta_a}$. In other words, for $n_k$ with $\omega^\dagger( c_{n_k -p}(r)) \leq \rho^k \theta_a^{-2} /2$, for $k=0,1, \cdots$, it follows for 
\[  Q_k :=  P_{x_k,y_k}^{n_k} \cdots  dP^{n_0}_{x_0,y_0} (x_1,y_1)    dP_{x,y}^{n_0}(x_0,y_0)
\]
that $Q_k(B_{-1}^k) \leq \rho^k$ and $Q_k(B_{j}^k)=0$ for $k=0,\ldots,m-p-1$, where $B^k_\cdot$ refers to the set obtained by applying $B_\cdot \mapsto B_\cdot^+$ $k$-times. In particular,
\[ \int d(z,\tilde{z}) dQ^k \leq \rho^k + c_{n_k-p}(r)
%% , \hbox{ and }  \int \omega^\dagger(d(z,\tilde{z})) dQ^k \leq \omega^\dagger(1) \rho^k + \frac{\kappa}{(\Delta(r)+n_k -p)^{\gamma}}
.
\]      
As  $\omega^\dagger \circ  c_j(r) \leq \kappa (\Delta(r)+j)^{-\gamma}$ it follows that one may choose $n_k$ to be smallest integer bigger than $ p - \Delta(r) + (2\kappa\theta_a^2)^{1/\delta} \rho^{-k/\gamma}$. For this choice of $n_k$, one then obtains that $\sum_{j=0}^{k} n_j \sim (2\kappa\theta_a^2)^{1/\delta} \rho^{-(k+1)/\gamma}$. The statement on the decay for $\gamma > 0$ follows from this.  

\medskip  
\noindent\textsc{Step 8. Continuity of $\Delta_0$ and $n_0$.} First of all, note that  \eqref{eq:global-estimate-for-Q-from-below} holds for $\theta_a := \kappa \exp (\kappa \| a\|_\omega)$ for some $\kappa$ only depending on $T$ and $\omega^\dagger$. In particular, by eventually enlarging $\kappa$, we may assume that the parameter $\epsilon$ at the end of Step 4 might be chosen as $\epsilon := \kappa^{-1} \exp ( - \kappa \| a\|_\omega)$. In particular,  $\delta > 0 $ in fact might be chosen independently from $a$ and that 
$\Delta_0 := 2 n_0 \kappa \exp (\kappa \| a\|_\omega)$ satisfies the required properties. As $n_0$ might be chosen to be the minimal value such that 
$\omega^\dagger(c_{n_0 + 1}(r)) \leq \theta_a^{-1} - \theta_a^{-2}$ and $2 (\kappa \theta_a)^2 n_0^{1-\gamma} < \gamma -1$, it follows that $\Delta_0$ and $n_0$ vary continuously with $\|a\|_\omega$.  
\end{proof}

We now discuss applications of Theorem \ref{theo:main-contraction-result} with respect to the Hölder continuity of the equilibrium state with respect to $a$. In order to do so, we first introduce several new objects in order to be able to treat both cases in Theorem \ref{theo:main-contraction-result} simultaneously. That is, in the expanding case, we set 
%$\gamma := \beta -1$,  $\omega^\dagger := \omega_{0, \beta -1 \textrm{log}}$ and
\begin{equation}  \label{def:omega-n-expanding}
\gamma := \beta -1, \quad \omega^\dagger := \omega_{0, \beta -1 \textrm{log}}, \quad
\omega_n(t) := {\kappa}   {\left(  {\log t}/{\log s} +   {\Delta_0/2  + n}/{2} \right)^{1 - \beta} }.
\end{equation}
On the other hand, if $T$ is expansive,  we consider
%$\gamma := \alpha\beta -1$,  $\omega^\dagger:=\omega_{\alpha - 1/\beta, 0 \textrm{log}}$ and
\begin{equation}  \label{def:omega-n-expansive}
\gamma := \alpha\beta -1, \quad \omega^\dagger:=\omega_{\alpha - 1/\beta, 0 \textrm{log}}, \quad
\omega_n(t) :=  {\kappa}   {\left( t^{-1/\beta} +  {\Delta_0/2 + n/2}  \right)^{1 - \alpha\beta} }.
\end{equation}
A further key ingredient is the Wasserstein distance, which is defined as follows. 
\begin{definition}\label{def:wasserstein}
Assume that $\mu_1,\mu_2$ are probability measures on $\Omega$. Then 
\[W(\mu_1,\mu_2) := \inf \left\{ \textstyle \int d(x,y) dQ(x,y) : Q \hbox{ is  a coupling of }\mu_1,\mu_2   \right\} \]
refers to the Wasserstein distance of $\mu_1$ and $\mu_2$. 
\end{definition}
As it is well known, $W$ is a metric on the space of Borel probability measures which is compatible with the weak$^\ast$-topology. Moreover, by Kantorovich's duality,
\[
W_d(\mu_1,\mu_2) = \sup\{ \textstyle \int f d(\mu_1 - \mu_2): 
\textrm{Lip}(f) \leq 1 \},
\]
where $\textrm{Lip}(f)$ refers to the Lipschitz constant of $f$ (i.e. $\textrm{Lip}(f) = \textrm{Höl}_{\omega_{1,0 \textrm{log}}}(f)$). By having a look to Theorem \ref{theo:main-contraction-result}, it is natural to consider an equivalent metric on $\Omega$. In order to do so, note that for any concave and strictly increasing function $\omega:[0,1] \to [0,\infty)$ with $\omega(0)=0$, it follows from the subadditivity of concave functions that $\omega(d)$ is a metric. Moreover, as $\omega$ is invertible, it follows that the topologies generated by $d$ and $\omega(d)$ coincide. With respect to this new metric, we will write  
\[W_\omega(\mu_1,\mu_2) := \inf \left\{ \textstyle \int \omega(d(x,y)) dQ(x,y) : Q \hbox{ is  a coupling of }\mu_1,\mu_2   \right\}. \]

We begin with an analysis of the asymptotics of $(\mathcal{L}^n_{\overline{a}})^\ast(\mu_i)$ with respect to $W_d$, where $(\mathcal{L}_{\overline{a}})^\ast$ is the dual of the action  of $\mathcal{L}_{\overline{a}}$ on the space of continuous functions. By Kantorovich's duality, it suffices in fact to analyse the action of $\mathcal{L}_{\overline{a}}$ on Lipschitz continuous functions (i.e. $\omega_{1,0 \textrm{log}}$-Hölder continuous functions). So assume that  $Q$ is a coupling of  $\mu_1$ and $\mu_2$  and that Theorem \ref{theo:main-contraction-result} is applicable. Then $P_{x,y}^ndQ(x,y)$ defines a coupling of the $(\mathcal{L}^n_{\overline{a}})^\ast(\mu_i)$, where $P^n_{x,y}$ is as in Theorem \ref{theo:main-contraction-result}. Hence, if $f$ is Lipschitz continuous, then  
\begin{align*}
\left| \int \mathcal{L}^n_{\overline{a}}(f) d\mu_1  - \mathcal{L}^n_{\overline{a}}(f) d\mu_2 \right|
& = \left| \int \mathcal{L}^n_{\overline{a}}(f)(x)   - \mathcal{L}^n_{\overline{a}}(f)(y) 
 dQ(x,y) \right|
\\ & 
 = \left| \int f(\tilde{x}) - f(\tilde{y})  dP^n_{x,y}(\tilde{x},\tilde{y}) 
 dQ(x,y) \right| 
 \\ & 
\leq \textrm{Lip}(f)  \int d(\tilde{x},\tilde{y}) dP^n_{x,y}(\tilde{x},\tilde{y}) dQ(x,y) \ll  \omega_n(r).
\end{align*}
That is, $W_d((\mathcal{L}^n_{\overline{a}})^\ast(\mu_1),(\mathcal{L}^n_{\overline{a}})^\ast(\mu_2)) \ll  \omega_n(r)$ and, in particular,  $((\mathcal{L}^n_{\overline{a}})^\ast(\mu_1): n\in \mathbb{N})$ is a Cauchy sequence and therefore converges to a probability measure $\mu$. It then follows from continuity that $((\mathcal{L}_{\overline{a}})^\ast(\mu) = \mu$. By applying the estimate to $\delta_x$ and $\mu$ and after changing the metric, { one then obtains in both cases the following corollary. In order to give unified statements, we now will make use of $\omega^\dagger$, $\gamma$ and $\omega_n(r)$ as defined in \eqref{def:omega-n-expanding} or \eqref{def:omega-n-expansive}.
}

\begin{corollary} \label{cor:decay-to-invariante-measure} 
Under the assumptions of Theorem \ref{theo:main-contraction-result}, assume that $\gamma > 0$.
Then there exists a unique Borel probability measure $\mu$ with $((\mathcal{L}_{\overline{a}})^\ast(\mu) = \mu$. Moreover, for any Lipschitz continuous function $f$ and $n > n_0$,
$\| \mathcal{L}^n_{\overline{a}}(f)  - \textstyle \int f d\mu \|_\infty  
\ll \textrm{Lip}(f) \omega_n(r).$
If $f$ is $\omega^\dagger$-Hölder continuous, then
$\| \mathcal{L}^n_{\overline{a}}(f)  - \textstyle \int f d\mu \|_\infty  
\ll \textrm{Höl}_{\omega^\dagger}(f) \; \omega_n(r)$. 
\end{corollary}
In order to obtain an estimate on the Hölder regularity of $ \mathcal{L}^n_{\overline{a}}(f)$, it suffices to consider $\mu_i := \delta_{x_i}$ for $i=1,2$ in the above estimate and apply Theorem \ref{theo:main-contraction-result} for the case $\gamma > 1$. Hence, 
%\begin{equation} %\label{eq:decay-hoelder-coefficient}
\[
\left|   \mathcal{L}^n_{\overline{a}}(f)(x_1)  - \mathcal{L}^n_{\overline{a}}(f)(x_2) \right|
  \ll \textrm{Lip}(f) \omega_n(d(x_1,x_2))
\]
%\end{equation}
for $n$ sufficiently large. That is, if $T$ is expanding, then $\gamma =\beta -1$ and $n$ has to be {bigger than} some lower bound which depends on $d(x,y)$. Therefore, Theorem \ref{theo:main-contraction-result} does not provide an answer in this case. However, by considering functions which are Lipschitz continuous with respect to the metric $\omega^\dagger(d)$, the estimate on $\int \omega^\dagger(d) dP^n_{x,y}$ provides a uniform upper bound for $n \geq n_0$.
That is, one immediately obtains the following result {(for the definitions of $\gamma$, $\omega^\dagger$ and $\omega_n$, see \eqref{def:omega-n-expanding} or \eqref{def:omega-n-expansive})}.
\begin{corollary} \label{cor:decay-of-Hoelder-coefficients} Under the assumptions of Theorem \ref{theo:main-contraction-result}, assume that $\gamma > 1$ and that $f$ is $\omega^\dagger$-Hölder continuous.
We then have with respect to the measure $\mu$ given by Corollary \ref{cor:decay-to-invariante-measure}, for $n \geq 0$,   that
$%\begin{equation} \nonumber \label{eq:decay-of-Hoelder-correlation}
\left\| \mathcal{L}^n_{\overline{a}}(f)  - \textstyle \int f d\mu \right\|_{\omega_n}  
\ll  \textrm{Höl}_{\omega^\dagger}(f).
$%\end{equation}
\end{corollary}

We  now discuss how these results allow to obtain the asymptotics of $\mathcal{L}_a^n$ 
and the continuity of $\mu_a$, $\lambda_a$ and $h_a$ as functions of $a$. 
The key observation in here is that       
$\mathcal{L}^n_{\overline{a}}(f/h_a) = {\mathcal{L}^n_{a}(f) }/{\rho_a^n h_a}$ which allows to apply Corollary \ref{cor:decay-to-invariante-measure} to  $\mathcal{L}^n_{a}$. 
In particular, if we assume that $\mu(h^{-1}) =1$, we have that 
\begin{equation} \label{eq:definition-h}
\left\|  \rho^{-n} \mathcal{L}^n_{a}(\mathbf{1})  - h  \right\|_\infty \leq \kappa e^{\kappa C_a}  (c_a + n )^{-\gamma}.
\end{equation}
For  $dm_a:= \mu_a(h_a^{-1})^{-1} h_a^{-1}d\mu =h_a^{-1}d\mu  $, we then obtain the following. 

\begin{corollary}\label{cor:decay-to-conformal-measure} Assume that $\gamma > 0$ and $f$ is $\omega^\dagger$-Hölder continuous. Then, with  respect to  $C_a,C_b$ as in \eqref{eq:distortion-expanding} and \eqref{eq:distortion-expansive}, $c_a$ depending continuously on $\|a\|_\omega$ and $\kappa$ depending only on $T$ and $\gamma$, 
  \[ \left\|  \frac{\mathcal{L}^n_{a}(f)(x)}{\mathcal{L}^n_{a}(\mathbf{1})(x)} - m_a(f) \right\|_\infty \leq \kappa e^{\kappa C_a} \|f\|_{\omega^\dagger} (c_a + n )^{-\gamma}.
  \]
\end{corollary}

\begin{proof} It follows from Corollary \ref{cor:decay-to-invariante-measure} that
 \begin{align*}
&   \left|  \frac{\mathcal{L}^n_{a}(f)(x)}{\mathcal{L}^n_{a}(\mathbf{1})(x)} - m_a(f) \right| 
= \left|  \frac{\mathcal{L}^n_{\overline{a}}(f/h_a)(x)}{\mathcal{L}^n_{\overline{a}}(\mathbf{1}/h_a)(x)} - m_a(f) \right| \\
\leq &  
  \frac{1}{{\mathcal{L}^n_{\overline{a}}(h_a^{-1})(x)}} 
  \left( \left|  \mathcal{L}^n_{\overline{a}}(f h_a^{-1})(x) - \mu_a(f h_a^{-1}) \right| + \left|  
  \mu_a(f h_a^{-1}) - m(f) \mathcal{L}^n_{\overline{a}}(h_a^{-1})(x)  \right| 
  \right) \\
\leq & \kappa e^{pC_a} \left( \textrm{H\"ol}_{\omega^\dagger}(fh_a^{-1}) + \|f\|_\infty \textrm{H\"ol}_{\omega^\dagger}(h_a^{-1}) \right) (c_a + n)^{-\gamma} \\
\leq & 2 \kappa e^{pC_a}  \|f\|_{\omega^\dagger} \|h_a^{-1}\|_{\omega^\dagger}   (c_a + n)^{-\gamma}.   
 \end{align*}
Hence, it remains to remark that $\|h_a^{-1}\|_{\omega^\dagger} \leq 2 e^{(p+1)C_a}$ by Proposition \ref{prop:eigenfunction}  
\end{proof}

Note that the above statement is independent from $h_a$ and $\rho_a$. In particular, it immediately follows that $a \to m_a$ and $a \mapsto \rho_a = \int \mathcal{L}(1) dm_a$ are continuous. %In fact, the following holds.
{
Before analysing the regularity of $m_a$, $h_a$ and $\rho_a$ in detail, we show how Corollary \ref{cor:decay-to-conformal-measure} implies that these objects are unique.

\begin{corollary}\label{cor:uniqueness} If $\gamma > 0$, then the following holds.
\begin{enumerate}
 \item Assume that $\tilde{h}: \Omega \to [0,\infty)$ is a continuous function such that $\|\tilde{h}\|_\infty > 0$ and $\mathcal{L}_{a}(\tilde{h}) = \tilde{\rho} \tilde{h}$ for some $\tilde{\rho} > 0$. Then $\tilde{h} / h_a$  is a positive constant and $\tilde{\rho} = \rho_a$.
 \item Assume that $\tilde{m}$ is a probability measure such that $\int \mathcal{L}^n_{a}(f) d\tilde{m} = \tilde{\rho} \int f d\tilde{m}$ for some $\tilde{\rho} > 0$ and any continuous $f$. Then, $\tilde{m}=m_a$ and $\tilde{\rho} = \rho_a$.
\end{enumerate}
\end{corollary}

\begin{proof}
 We begin with the proof of the first assumption. In order to do so, note that topological transitivity implies that $\mathcal{O}_-(x) := \bigcup_{n= 1}^\infty T^{-n}(\{ x\}\}$ is dense in $\Omega$. In particular, if $\tilde{h}(x)=0$, then $\mathcal{L}_{a}^n(\tilde{h})(x) = \tilde{\rho}^n \tilde{h}(x) =0$ implies that $\tilde{h}(y) =0$ for all $y \in \mathcal{O}_-(x)$, a contradiction to $\|\tilde{h}\|_\infty > 0$.

 Hence, $\tilde{h}$ is bounded away from $0$ and infinity. Therefore, there exists $C> 0$ with $C^{-1} h_a < \tilde{h}  < C h_a$. Hence, by positivity of $\mathcal{L}_{a}$, we obtain for all $n \in \mathbb{N}$ that
 \[  C^{-1} \rho_a^n  h_a = C^{-1} \mathcal{L}_{a}^n(h_a)(x)  <   \mathcal{L}_{a}^n(\tilde{h})(x) = \tilde{\rho}^n \tilde{h} < C  \mathcal{L}_{a}^n(h_a)(x) =  C  \rho_a^n h_a. \]
 This implies that $\tilde{\rho} = \rho_a$. In order to show the remaining assertion of the first part, note that it follows from the fact that the Lipschitz continuous functions are dense in the space of continuous functions and Corollary \ref{cor:decay-to-conformal-measure} that
 $ \lim_{n \to \infty} \mathcal{L}_{a}^n(\tilde{h})/ \mathcal{L}_{a}^n(\mathbf{1})  = m_a(\tilde{h})$. Hence, $\tilde{h} / h_a = m_a(\tilde{h})/m_a({h})$.

 For the second assertion, note that, $\int \mathcal{L}_{a}^n(\mathbf{1}) d \tilde{m} = \tilde{\rho}^n$ by iteration. However, it follows from \eqref{eq:distortion-expanding} and \eqref{eq:distortion-expansive} that the left hand side is comparable, up to some multiplicative constant, to $\mathcal{L}_{a}^n(\mathbf{1})(x)$, independent of $x \in \Omega$. By repeating the argument with respect to $m_a$, it follows that $\tilde{\rho} = \rho_a$.

 By applying Corollary \ref{cor:decay-to-conformal-measure} to $h_a$, it follows that $ \mathcal{L}_{a}^n(\mathbf{1}) / \rho_a^n  \to 1$ uniformly as $n \to \infty$. This implies that $\| \mathcal{L}_{a}^n(\mathbf{1}) / \rho_a^n \|_\infty$ is uniformly bounded. Therefore, one obtains from Corollary \ref{cor:decay-to-conformal-measure} by multiplying with $ \mathcal{L}_{a}^n(\mathbf{1}) / \rho_a^n$ and using again the density of Lipschitz functions that
 \[ \lim_{n \to \infty} \|\rho_a^{-n} \mathcal{L}_{a}^n(f)  - \rho_a^{-n} m_a(f) \mathcal{L}_{a}^n(\mathbf{1}) \|_\infty =0,\]
 for any  continuous $f$. The assertion follows by integrating with respect to $\tilde{m}$.
\end{proof}

We now close this section by proving that these objects depend Hölder continuously on $a$.
}

\begin{theorem} \label{theo:continuity}
 Assume that the conditions of Theorem \ref{theo:main-contraction-result} hold with respect to the potentials $a$, $b$ and assume that $\gamma>0$. Then, for some $\kappa$ only depending on $T$ and $\gamma$ and with $C_a,C_b$ as in \eqref{eq:distortion-expanding} and \eqref{eq:distortion-expansive},
  \begin{align}
\label{eq:Hoelder-continuity-equilibrium-conformal-lyapunov}
  W_{\omega^\dagger}(\mu_a,\mu_b),  W_{\omega^\dagger}(m_a,m_b), |\rho_a - \rho_b|, \|h_a - h_b\|_\infty  \\
  \nonumber <   e^{\| a-b\|_\infty} \kappa \min\left\{e^{\kappa C_a}, e^{\kappa C_b}\right\}  \| a-b\|_\infty^{\frac{\gamma}{\gamma + 1}}.  
\end{align}  
%  
%  \begin{align}
% \label{eq:Heolder-continuity-equilibrium}
%  W_{\omega^\dagger}(\mu_a,\mu_b)  & \leq  e^{\| a-b\|_\infty} \kappa \min\left\{e^{\kappa C_a}, e^{\kappa C_b}\right\}  \| a-b\|_\infty^{\frac{\gamma}{\gamma + 1}},\\
% \label{eq:Hoelder-continuity-conformal}
%  W_{\omega^\dagger}(m_a,m_b)  & \leq e^{\| a-b\|_\infty} \kappa \min\left\{e^{\kappa C_a}, e^{\kappa C_b}\right\}  \| a-b\|_\infty^{\frac{\gamma}{\gamma + 1}}, \\
%  \label{eq:Hoelder-continuity-lyapunov}
%  |\rho_a - \rho_b| & \leq 
%  \end{align}
\end{theorem}

\begin{proof} 
Assume that $h_a$ and $h_b$ are the functions given by \eqref{eq:definition-h} with respect to $a$ and $b$ and observe that the constants {in \eqref{eq:definition-h}} are of the form $e^{p C_a}$ and $e^{C_a}$. Furthermore, observe that
$ \mathcal{L}^n_{\overline{a}}(f/h_a) = {\mathcal{L}^n_{a}(f) }/{\rho^n h_a}$,
Hence, by Corollary \ref{cor:decay-to-invariante-measure} and Proposition \ref{prop:eigenfunction}, 
 \begin{align*}
\|\rho_a^{-n}  \mathcal{L}^n_{a}(1)   - h_a \mu(h_a^{-1}) \|_\infty \leq  \|h_a\|_\infty  \textrm{Höl}_{\omega^\dagger}(1/h_a)  (c_a + n)^{-\gamma} \leq e^{(2p+1)C_a} (c_a + n)^{-\gamma}. 
 \end{align*}
Moreover, $c_a$ depends continuously on $\|a\|_\omega$ by Theorem \ref{theo:main-contraction-result}. In particular, $\lim \rho_a^{-n}  \mathcal{L}^n_{a}(1) =h_a$. 
By combining \eqref{eq:distortion-expanding}, \eqref{eq:distortion-expansive}, the continuity properties of $h_a$ and the above convergence, one then obtains that $ \| \mathcal{L}^n_{\overline{a}}(1/h_a) \|_{\omega^\dagger} \leq  e^{\kappa C_a}$ with respect to some $\kappa$ only depending on $T$. As $\textrm{Höl}_{\omega^\dagger}(fg) \leq \|f\|_{\omega^\dagger} \|g\|_{\omega^\dagger}$, by possibly enlarging $\kappa$,
\begin{align*}
& \left\|\frac{\mathcal{L}^n_a(f \mathcal{L}^n_{a}(1))}{\rho_a^{2n} h_a} - \mu_a(f) \mu_a(h_a^{-1})\right\|_\infty 
= \left\| \mathcal{L}^n_{\overline{a}}(f \mathcal{L}^n_{\overline{a}}(h_a^{-1}))   - \mu_a(f) \mu_a(h_a^{-1})\right\|_\infty \\
  \leq & 
 \left\| \mathcal{L}^n_{\overline{a}}(f \mathcal{L}^n_{\overline{a}}(h_a^{-1}))   - \mu_a(f \mathcal{L}^n_{\overline{a}}(h_a^{-1}))\right\|_\infty +  \left| \mu_a(f (\mathcal{L}^n_{\overline{a}}( h_a^{-1})  -\mu(h_a^{-1})) )\right|
 \\
 \leq & \kappa \frac{\textrm{Höl}_{\omega^\dagger}(f \mathcal{L}^n_{\overline{a}}(h_a^{-1}) )}{(c_a + n)^{\gamma}}  + \|f\|_\infty \|\mathcal{L}^n_{\overline{a}}( h_a^{-1})  - \mu_a(h_a^{-1}) \|_\infty
 \\
 \leq & \kappa \frac{\|f\|_{\omega^\dagger} \| \mathcal{L}^n_{\overline{a}}(h_a^{-1}) \|_{\omega^\dagger}  }{(c_a + n)^{\gamma}} + 
  \kappa \frac{\textrm{Höl}_{\omega^\dagger}(h_a^{-1}) \|f\|_\infty}{ (c_a + n)^{\gamma}} \leq   \frac{2 \kappa e^{\kappa C_a}}{ (c_a + n)^{\gamma}} \|f\|_{\omega^\dagger}.    
\end{align*}
We are now in position to obtain an approximation which is independent from $\rho_a$ and $h_a$: 
\begin{align*}\nonumber 
& \left\|\frac{\mathcal{L}^n_a(f \mathcal{L}^n_{a}(1))}{\mathcal{L}^{2n}_{a}(1)} - \mu_a(f) \right\|_\infty 
= \left\| \frac{\mathcal{L}^n_{\overline{a}}(f \mathcal{L}^n_{\overline{a}}(h_a^{-1}))}{\mathcal{L}^{2n}_{\overline{a}}(h_a^{-1})} - \mu_a(f) \right\|_\infty \\
  \nonumber 
  \leq &
  \left\| \frac{\mathcal{L}^n_{\overline{a}}(f \mathcal{L}^n_{\overline{a}}(h_a^{-1}))}{\mathcal{L}^{2n}_{\overline{a}}(h_a^{-1})} - \frac{\mu_a(f) \mu(h_a^{-1})}{\mathcal{L}^{2n}_{\overline{a}}(h_a^{-1})} \right|_\infty
   + |\mu_a(f)|  \left\| \frac{ \mu(h_a^{-1})}{\mathcal{L}^{2n}_{\overline{a}}(h_a^{-1})}   -1 \right\|_\infty 
\\
\label{eq:decay-of-quotient}
\leq & \left\| \frac{1}{\mathcal{L}^{2n}_{\overline{a}}(h_a^{-1})} \right\|_\infty \left( 
 \frac{2 \kappa e^{\kappa C_a}}{ (c_a + n)^{\gamma}} \|f\|_{\omega^\dagger} +  \frac{ e^{\kappa C_a}}{ (c_a + 2n)^{\gamma}} \| f \|_\infty   \right) 
 \ll   \frac{\kappa e^{\kappa C_a}}{ (c_a + n)^{\gamma}} \|f\|_{\omega^\dagger}
 {.}
\end{align*}
We now compare the action of $\mathcal{L}^n_a$ and $\mathcal{L}^n_b$ on $\omega^\dagger$-Hölder functions. Note that

\begin{align*}
& \left| \frac{\mathcal{L}^n_a(f\mathcal{L}^n_a(1)) }{\mathcal{L}^{2n}_a(1)} -  \frac{\mathcal{L}^n_a(f\mathcal{L}^n_b(1)) }{\mathcal{L}^{2n}_b(1)} \right| \\
  \leq & \left| \frac{\mathcal{L}^n_a\left(f \mathcal{L}^n_a(1)\left(1 - e^{b_n - a_n} \frac{\mathcal{L}^n_a(  e^{b_n - a_n} )}{\mathcal{L}^n_a(1)} \right)\right)}{\mathcal{L}^{2n}_a(1)}  \right| 
  + \left|  \frac{\mathcal{L}^n_a(f\mathcal{L}^n_b(1)) }{\mathcal{L}^{2n}_b(1)}  \right|
    \left|  \frac{\mathcal{L}^{2n}_b(1)}{\mathcal{L}^{2n}_a(1)} -1\right|
 \\
 \leq & \|f\|_\infty \left|e^{2   \| a_n-b_n\|_\infty} -1\right| + 
 \|f\|_\infty \left|e^{ \| a_{2n}-b_{2n} \|_\infty} -1\right| \leq 2 \|f\|_\infty \left|e^{2 n \| a-b\|_\infty} -1\right|
\end{align*}
We now give an estimate $\mu_a(f) - \mu_b(f)$. As this quantity  is invariant under adding a constant to $f$, we may assume without loss of generality that $\min f =0$. Hence, $\|f\|_{\omega^\dagger} \leq (1 + \omega^\dagger(1)) \textrm{Höl}_{\omega^\dagger}(f)$. For  $n = \| a-b\|_\infty^{-1/(\gamma + 1)}$, one then obtains from the last two estimates that 
\begin{align*}
 |\mu_a(f) - \mu_b(f)| & \ll   \frac{\textrm{Höl}_{\omega^\dagger}(f) e^{\kappa C_a}}{(c_a + n)^\gamma}   + e^{\| a-b\|_\infty} \textrm{Höl}_{\omega^\dagger}(f)  \| a-b\|_\infty^{\frac{\gamma}{\gamma + 1} } + \frac{\textrm{Höl}_{\omega^\dagger}(f) e^{\kappa C_b}}{(c_b + n)^\gamma}  \\
 & \ll \textrm{Höl}_{\omega^\dagger}(f) e^{\| a-b\|_\infty} \min\left\{e^{\kappa C_a}, e^{\kappa C_b}\right\} \| a-b\|_\infty^{\frac{\gamma}{\gamma + 1} }.
\end{align*}
The first estimate in \eqref{eq:Hoelder-continuity-equilibrium-conformal-lyapunov} and the second estimate 
follows by the same lines. For the continuity of $\rho_a$, note that 
\begin{align*}
|\rho_a - \rho_b| & = \left| \int \mathcal{L}_a(1) dm_a +  \int \mathcal{L}_b(1) dm_b \right| \\
& \leq  \left| \int \mathcal{L}_a(1) - \mathcal{L}_b(1)   \right| dm_a  +  \left| \int \mathcal{L}_b(1)  d(m_a - m_b)  \right|\\
& \leq  \left(e^{\|b-a\|_\infty} -1\right) \rho_a +  e^{\| a-b\|_\infty} \min\left\{e^{\kappa C_a}, e^{\kappa C_b}\right\}  \| a-b\|_\infty^{\frac{\gamma}{\gamma + 1}} \textrm{Höl}_{\omega^\dagger}(\mathcal{L}_b(1)) \\
& \ll e^{\|b-a\|_\infty} \min\left\{e^{\kappa C_a}, e^{\kappa C_b}\right\}
\| a-b\|_\infty^{\frac{\gamma}{\gamma + 1}}. 
\end{align*}
The final estimate in \eqref{eq:Hoelder-continuity-equilibrium-conformal-lyapunov} follows by similar arguments, based on the fact that $\| \rho_a^n \mathcal{L}^n_a(\mathbf{1}) - h_a \|_\infty \ll \|h_a\|_\infty  (c_a + n)^\gamma$.
\end{proof}

\begin{remark} \label{rem:examples-for-decay:Manneville-Pomeau-and-Dyson}
We now discuss two classes of examples where these conditions turn out to be natural. We begin with the Pomeau-Manneville family in Example \ref{ex:Manneville}. For $\delta \in (0,1)$, this map is Ruelle expansive with contraction rate $c_n(t) \ll t(1+ nt^{\delta})^{-1/\delta}$. Hence, the above results with respect to the $\sup$-norm hold with respect to potential functions $a$, which are $\alpha$-Hölder continuous for $\delta < \alpha $.  As $\alpha$-Hölder continuous functions on $\mathbb{R}$ for $\alpha> 1$ are constant, the natural range for the Hölder exponent of $a$ is 
$\delta < \alpha \leq 1$. 

Furthermore, we have that $\alpha \delta^{-1} > 2$ if $\alpha > 2 \delta$. Therefore, in order to apply results like Corollary \ref{cor:decay-of-Hoelder-coefficients} for the decay of regularity, one has to assume that $\delta < 1/2$ and $2 \delta < \alpha \leq 1$. Observe that a change of behaviour at $\delta = 1/2$ is reasonable as it is known from the decay of correlation of the invariant probability measure which is absolutely continuous with respect to Lebesgue measure, that this is a critical value (see \cite{Sarig--Subexponential-Decay-Of-Correlations--IM2002,Gouezel--Sharp-Polynomial-Estimates-For--IJM2004}). However, the results in here do not cover this measure as it is associated to the geometrical potential $-\log T'$, which is $\delta$-Hölder. As a concluding remark with respect to this example, we would like to remark that the expansive behaviour of $T$ is only due to the indifferent fixed point at zero, i.e. consequence of an isolated phenomena. This can be used in fact to obtain exponential decay based on the combinatorial observation that there are more expanding than expansive branches (see \cite{Klo}). 

The second class of examples is related to the Dyson model of a ferromagnet with binary spins and polynomially decaying interactions, which is the physically relevant scale. In terms of thermodynamic formalism, this translates to considering  the shift on $\{-1,1\}^\N$ and the potential 
\[ a((x_1,x_2,\ldots )) = \sum_{n=1}^\infty x_1x_{n+1} n^{-\alpha}.\]
As it easily can be seen, this potential is $\omega_{0,(\alpha -1) \textrm{log}}$-Hölder continuous (cf. Section \ref{subsec:dyson}) and that his weak regularity is not an isolated phenomenon as above. Furthermore, it follows that there is polynomial decay of correlation whenever $\alpha> 2$. This adds a new detail to this well-studied model as so far it only was known that there exists more than one equilibrium state for $\alpha =2$ (see \cite{Frohlich-Spencer--The-Phase-Transition-In--CMP1982}) and a unique one for 
$\alpha > 2$, as in this case, the unicity follows, e.g., from the observation that the potential satisfies Walters condition (see \cite{MR2342978}).

\end{remark}

\section{Spectral Triples and their spectral metrics} \label{mainsec-1}
In this section we adapt the construction of the spectral triple  by R. Sharp in \cite{MR2990125,MR3485414} to our more general setting of Ruelle expansive maps. Moreover, we provide criteria such that Connes' pseudo-metric is comparable to the Wasserstein distance on probability measures.

\subsection{Spectral triples}
\label{sec-spe-trip-norm-potentials}
We now begin with the construction of the spectral triples. In order to do so, we will make use of the fact that preimages of Ruelle expanding maps come in pairs. Therefore recall that for any   
$(r,\lambda)$-expanding map $T$ the map $T^n$ is $(r/2,\lambda^n)$-expanding. In particular, there exists an open cover $\mathcal{U}$ of $\Omega$ such that     
%%\marginpar{\tiny check reference, e.g. Paulo}
 for any $n \in \mathbb{N}$, $U \in \mathcal{U}$, $x, {y} \in U$ and 
 $\tilde{x} \in \Omega$  with  $T^n(\tilde{x})=x$, there exists a unique $\tilde{y}\in \Omega$  with $T^n(\tilde{y})={y}$ and 
 $d(T^k(\tilde{x}), (T^k(\tilde{y})) \leq  \lambda^{n-k} d(x,y)$ for $k = 0,\ldots, n$. We will refer to $\tilde{y}$ as the $n$-the preimage of $y$ adapted to $\tilde{x}$ and $U$ and set   
\[T^{-n}_{\tilde{x}}: U \to \Omega, \quad    T^{-n}_{\tilde{x}} (y) := \tilde{y} .\] 
In order to parametrize the set of all inverse branches, fix $x_U \in U$, define
\[W^*_U := \{(x,n) \in \Omega \times \mathbb{N} : T^n(x) = x_U \} \]
and set $l(w) := n$ and $\tau_w = T_x^{-n}$,  for $w = (x,n) \in W^*$.  

The construction of the spectral triple starts 
with the $C^*$-algebra $A = C(\Omega,\mathbb{C})$, the space of all continuous complex valued functions defined on $\Omega$
and the Hilbert space 
\[ H_U = \ell^2(W^*_U)\oplus \ell^2(W_U^*),\]
for some fixed $U \in \mathcal{U}$. 
{We} will write a generic element in $H$ as
\[
\bigoplus_{w \in W^*_U}  \binom{\epsilon_1(w) }{\epsilon_2(w) },
\]
where $\epsilon_i: W^{*}_U\to \mathbb{C}$
are complex valued functions with 
$\sum_{w \in W^*_U} |\epsilon_i(w)|^2 <\infty$.
We now fix two arbitrary elements $x, y \in U$.
For $a\in A$, the operator $L_a: H \to H$ is defined by
\[
L_a  (
	\bigoplus_{w \in W^*}  \binom{\epsilon_1(w) }{\epsilon_2(w) }
 )
=
\bigoplus_{w \in W^*}
\binom{a( \tau_w(x))\, \epsilon_1(w) }{a(\tau_w(y))\,\epsilon_2(w) }.
\]
To construct the Dirac operator $D_s$, we fix a continuous  function $J:\Omega \to (0,\infty)$, $ s\geq 0$ and set, for $w \in W^*_U$,
\[\mathbb{J}^s_w(x) := \prod_{k=0}^{l(w)-1}  \left(J(T^{k}(\tau_w(x)))\right)^s .\]
The operator $D_s$ is then defined by
\begin{align} \label{eq:Definition of D_s}
D_s :&\,   \hbox{Dom}(D_s) := \left\{ \bigoplus_{w \in W^*}  \binom{\epsilon_1(w) }{\epsilon_2(w)} : \sum_w \frac{|\epsilon_i(w)|^2}{\mathbb{J}^s_w(x)^2}  < \infty  \right\} \to H_U, \\
\nonumber
& \,
 \bigoplus_{w \in W^*}  \binom{\epsilon_1(w) }{\epsilon_2(w) }  \mapsto 
\bigoplus_{w \in W^*} \frac{1}{\mathbb{J}^s_w(x)} \,\binom{\epsilon_2(w)}{\epsilon_1(w)}.
\end{align}
It then follows immediately from the definitions that 
$\hbox{Dom}(D_s)$ is dense in $H_U$ and that $\langle D_s x,y\rangle = \langle x, D_s y\rangle$ for all $x,y \in \hbox{Dom}(D_s)$. That is, $D_s$ is symmetric. By choosing an orthonormal basis of $H_U$, it is then easy to see that $\hbox{Dom}(D_s) = \hbox{Dom}(D_s^\ast)$, implying that $D_s$ is self-adjoint.
Moreover,  
%%\begin{equation} \label{piot}
\[ [D_s,L_a] ( \bigoplus_{w \in W^*}  \binom{\epsilon_1(w) }{\epsilon_2(w) } )
=
\bigoplus_{w \in W^*}
\frac{ a(\tau_w (x))- a(\tau_w (y)) }{\mathbb{J}^s_w(x)}
\binom{\,- \epsilon_2(w) }{\,\epsilon_1(w) }.
\]
%%\end{equation}
 
{
\begin{proposition} \label{prop:spectral-triple}
Assume that $T$ is Ruelle expanding and that $J:\Omega \to (0,\infty)$ is a continuous function with  
\[\lim_{n \to \infty} \sup \{\|\mathbb{J}_w \|_\infty:  \ell(w) =n\} =0,\]
and that $x,y \in U$, $U \in \mathcal{U}$. Then there exists $s_0> 0$ such that $(H_U,A,D_s)$ is a spectral triple for $0 < s <  s_0$.
\end{proposition}

\begin{proof} It is only left to show that $D_s$ has a compact resolvent and that there exists $s_0 > 0$ such that   $\{a\in A: \|[D,L_a]\|<+\infty \}$ is dense in $A$ for $0 \leq s < s_0$.

In order to do so, observe that for $t \in \mathbb{C}$, the product structure of $D_s$ implies that $D_s - t \hbox{id}$ is invertible if and only if the the linear map $A_w: (z_1,z_2) \mapsto \mathbb{J}^{-s}_w(x)(z_2,z_1) - t (z_1,z_2)$ is invertible for all $w \in W_U^\ast$. Or equivalently,
 $t^2 \neq \mathbb{J}^{-2s}_w(x)$ for all $w \in W_U^\ast$ and $A_w$ has eigenvalues  $\pm \mathbb{J}^{-s}_w(x) - t \neq 0$.  As $\mathbb{J}^{s}_w(x)  \to 0$ as $\ell(w) \to \infty$, the eigenvalues of $A_w^{-1}$ tend to zero as $\ell(w) \to \infty$. This implies that $(D_s - t \, \hbox{id})^{-1}$ is compact for $t$ in the resolvent set of $D_s$. 
 
It remains to analyse $[D_s,L_a]$. In order to do so, choose $n_0 \in \mathbb{N}$ such that $\| \mathbb{J}_w \|_\infty < 1$ for all $\omega$ with $\ell(\omega) \geq n_0$. Furthermore, set $C_1:= \sup \{  \|(\mathbb{J}_\omega)^{-1}\|_\infty  :  \ell(\omega) \} < n_0$ and $C_2:= \inf \{  \|(\mathbb{J}_\omega)^{-1}\|_\infty  :  \ell(\omega) =  n_0  \}$. Now observe that $C_1 < \infty$ as $J$ is continuous and that $C_1 > 1$ by construction.
In particular, if $a$ is Lipschitz continuous and $\ell(\omega) = p n_0 + q$ for $p,q \in \mathbb{N}\cup \{0\}$ and $q  < n_0$, then
 \[
 \frac{| a(\tau_w (x))- a(\tau_w (y))| }{\mathbb{J}^s_w(x)}
 \leq C_1 C_2^{ps}  \hbox{Lip}(a)  d(\tau_w (x),\tau_w (y)) \leq C_1 C_2^{ps} \hbox{Lip}(a) \lambda^{\ell (w)} \leq C_1 \hbox{Lip}(a) (C_2^s\lambda^{n_0})^p.
 \]
 In particular, if $0 \leq s < s_0 := - n_0 \log \lambda / \log C_2$, then $[D_s,L_a]$ is a bounded operator. The remaining assertion follows from the fact that the Lipschitz continuous functions are dense in $C(\Omega)$.
 \end{proof}
}

\begin{remark} \label{remark:s_0}
Even though the proof of Proposition \ref{prop:spectral-triple} is elementary, the result gives rise to interesting observations with respect to the dimension of the ambient space $\Omega$ and the relation of $D_s$ to the usual derivative. Firstly, if $\Omega \subset \{1, 2, \ldots k\}^\mathbb{N}$ is a subshift of finite type with metric 
\[
d((x_i),(y_i)) := \lambda^{ \text{ { $\min \{i : x_i \neq y_i\}$ } }   },
\]
then $\mathcal{U}$ consists of the cylinders of length one and the expansion rate of the shift $T$ is equal to $\lambda$. In particular, a change of metric corresponds to a change of $\lambda$ and a brief analysis of the above proof shows that $(H_U,A,D_s)$ is a spectral triple for any $s > 0$.

On the other hand, if $\Omega$ is not zero-dimensional, then $s_0$ plays an essential rôle as indicated in the following basic  example. Let $T: \mathbb{T}^2 \to \mathbb{T}^2$, $(x,y) \mapsto (2x \mod 1, 2y \mod 1)$ and $J = 1/4$. Then $J$ is normalized 
and $d(\tau_w(x),\tau_w(y))/\mathbb{J}^s_w(x) = 2^{ (2s - 1)|w|}  d(x,y)$. Hence, this quotient is uniformly bounded if and only if $s \in [0,1/2]$. Moreover, 
 if $a \in C^1(\mathbb{T})$ and $h_w := \tau_w(x) - \tau_w(y)$, then 
 \begin{align*} 
 \frac{a(\tau_w(x))- a(\tau_w(y))}{\mathbb{J}_w(x)}
 & = 
 2^{ (2s - 1)|w|} d(x,y) 
 \left( a'(\tau_w(x))\left( h_w/|h_w| \right)
+ o(1)  \right)\\
& = 2^{ (2s - 1)|w|}  
 \left( a'(\tau_w(x))\left( x-y \right)
+ d(x,y) o(1)  \right)
,\end{align*}
where $a'$ denotes the usual derivative. In particular, if $s = 1/2$, this establishes a connection between $[D_s,L_a]$ and $a'$. Moreover, note that $1/2$ is the   natural parameter in this example: the equilibrium state associated to $J$ is the two-dimensional {Lebesgue} measure whereas $J^{1/2}$ corresponds to the arclength.

Now assume that, after identifying $\mathbb{T}^2$ with $[0,1]^2/_\sim$, that $x  - y$ is colinear to $(1,0)$ and that  $a:\mathbb{T}^2 \to \mathbb{R}$ is of the form $(z_1,z_2) \mapsto f(z_2)$, where $f : [0,1] \to \mathbb{R}$ is continuous and $f(0)=f(1)$. Then $a \in A$, $[D_s,L_a] = 0$ and there is no uniform bound on $\max a - \min a = \max f - \min f$. Hence, the image of 
\[ \{ a \in A: \|[D_s,L_a]\| \leq 1  \}  \]
in $A/z\mathbf{1}$ is unbounded and, therefore, a result due to Rieffel and Pavlović implies that Connes' pseudo-metric is not a metric (for a brief exposition of Connes' pseudo-metric, see \cite[Section 2.1]{MR3084488}). However, by considering a finite direct sum of the spectral triple, this problem can be resolved as shown in Theorem \ref{theo:spectral-metric} below.    
\end{remark}

\subsection{Spectral triples and their spectral metrics} \label{subsec:spectral-metric}
In this section, we modify the above construction in order to obtain a spectral triple such that the topology of $X$ can be recovered by Connes' pseudo-metric. In order to do so, we assume that $T$ is a Ruelle expansive map such that the associated open cover $\mathcal{U}$ has a certain overlap, that is, for each $U \in  \mathcal{U}$, there exists $V \in   \mathcal{U}$ with $U \neq V$ and $U \cap V \neq 0$.  
Furthermore, we now fix $J: \Omega \to (0,1)$ and assume that there exists a finite set  $R \subset \bigcup_{U \neq V} U\cap  V$ such that the following conditions are satisfied.
\begin{itemize}
 \item[(C0)] $R \cap U \neq \emptyset $ for all  $U \in  \mathcal{U}$.
 \item[(C1)] There exists $s> 0$ and $C_1>0$ such that for all $x,y \in U \cap R$, $w \in W_U^\ast$ and  $U \in \mathcal{U}$,
\[ \frac{1}{C_1}< \frac{d(\tau_w(x),\tau_w(y))}{\mathbb{J}^s_w(x)} < C_1\]
%\[ K_s
%:= \sup\left\{ \frac{d(\tau_w(x),\tau_w(y))}%{\mathbb{J}^s_w(x)} : x,y \in U \cap R, w \in W_U^\ast, U \in \mathcal{U} \right\} < \infty
%\]
\item[(C2)] There exists $C_2$ such that, for all $x,y\in R$ and  $u,v \in W^n$, there exist $k \in \mathbb{N}$, $z_1, \ldots z_k \in R$, $U_1,\ldots U_k$ and $w_1 \in W^n_{U_1}, \ldots w_k \in W^n_{U_k}$ with $z_1 = x$, $w_1 = u$  and  
$z_k = y$, $w_k = v$ such that 
\begin{enumerate}
 \item for each $j = 1, \ldots k-1$, 
 $\tau_{w_{j+1}}(z_{j+1}) \in \tau_{w_j}(U_j)$,
 \item $\sum_{j=1}^{k-1} d(\tau_{w_{j}}(z_{j}), \tau_{w_{j+1}}(z_{j+1})) \leq C_2 d(\tau_u(x),\tau_v(y))$.
\end{enumerate}
\end{itemize}
The motivation behind these conditions is the following. Assume that $\Omega$ is pathwise connected, that $\gamma$ is a curve in $\Omega$ and that $n \in \mathbb{N}$. Then any cover of $\gamma$ by elements of $T^{-n}(\mathcal{U})$ gives rise to a sequence of points in $T^{-n}(R)$ such that any two neighbouring points are in $\tau_\omega(U)$ for some $U$ and $\omega$. 
{This allows us to} identify those $a \in C(\Omega)$ such that  $[D^s,L_a]$ extends to a bounded operator.

The spectral triple under consideration is now a finite sum of those in Proposition \ref{prop:spectral-triple}, provided that $\mathcal{U}$ and $R$ are as above.  

\begin{definition} For $x,y \in U \in \mathcal{U}$ and $s> 0$, let $D_s^{x,y}$ refer to the Dirac operator as in \eqref{eq:Definition of D_s}. We then refer to $(H_U,A,D^s_{x,y})$ as the partial spectral triple, to $D_s^{x,y}$ as the partial Dirac operator and to 
\[ (H,A,\mathcal{D}_s) := 
\left(\bigoplus_{U \in \mathcal{U}} \bigoplus_{x,y \in R \cap U, x \neq y} H_U,A, \bigoplus_{U \in \mathcal{U}} \bigoplus_{x,y \in R \cap U, x \neq y} D_s^{x,y}\right) \]
as the global spectral triple.  
\end{definition}

With (C0) to (C2) at hand, it is then possible to relate Connes' pseudo-metric and the Wasserstein distance $W$ as defined in Definition \ref{def:wasserstein}. For a spectral triple $(H,A,D)$ and states $p,q$ of $A$, the Connes (pseudo-)metric is defined by
\begin{align*}
d_{\hbox{\tiny spec} }(p,q) := 
\sup \left\{
 p(a)-q(a)  : a \in A, [D,L_a] \hbox{ densely def.,} \|[D,L_a]\|\leq 1
\right\}.
\end{align*}
However, if $A$ is the $C^\ast$-algebra of continuous functions on a compact metric space, then the set of states of $A$ coincides with the set of Borel probability measures by the Riesz representation theorem. Also note that in this situation, the set of Lipschitz continuous functions is dense in $A$ and that it follows from Kantorovich's duality that 
\begin{align*}
W(p,q) := 
\sup \left\{ \textstyle 
 \int a dp - \int a dq : a \in A, \hbox{Lip}(a)\leq 1 
\right\}.
\end{align*}
Furthermore, observe that the metric on $X$ can be recovered through $d(x,y) = W(\delta_x,\delta_y)$, where $\delta_x,\delta_y$ refer to the Dirac measures in $x$ and $y$, respectively. 

\begin{theorem} \label{theo:spectral-metric} Assume that $T:X \to X$ is a Ruelle expansive map, $X$ is a compact, metric space and $J : X \to (0,\infty)$ is continuous. If (C1) holds, then 
$(H,A,\mathcal{D}_{s_0})$ is a spectral triple. If (C1) and (C2) hold, then
\[ C_1^{-1}  W  \leq  d_{\hbox{\tiny spec}}  \leq C_1C_2 W.\]
\end{theorem}

\begin{proof} It follows from (C1) and the expansiveness of $T$ that 
\[ \mathbb{J}^s_w(x)  \leq C_1 d(\tau_w(x),\tau_w(y))  \to 0  \hbox{ as } \ell(w)\to \infty.\]
It follows from this{,} as in the proof of  Proposition \ref{prop:spectral-triple}{,}
that $\mathcal{D}_{s_0}$ has a compact resolvent. 
So it remains to analyse $[\mathcal{D}_{s_0},L_a]$. 
As it easily can be seen,  
\[ \| [\mathcal{D}_s,L_a]\|  = \sup \left\{ \frac{|a(\tau_w(x)) - a(\tau_w(y))|}{\mathbb{J}^s_w(x)} : x,y \in U \cap R, w \in W^\ast, U \in \mathcal{U} \right\}.   \]
Hence, if $a$ is Lipschitz continuous then (C1) implies that for all $s \in(0, s_0]$,  $ x,y \in U \cap R $, $w \in W^\ast_U$ and $U \in \mathcal{U}$,  
\[\frac{|a(\tau_w(x)) - a(\tau_w(y))|}{\mathbb{J}^s_w(x)} 
\leq  \hbox{Lip}(a) \frac{d(\tau_w(x),\tau_w(y))}{\mathbb{J}^s_w(x)} 
\leq C_1 \hbox{Lip}(a),\]
where $\hbox{Lip}(a)$ refers to the Lipschitz constant of $a$. Hence, $\| [\mathcal{D}_s,L_a]\| \leq C_1 \hbox{Lip}(a)$. This proves that 
$(H,A,\mathcal{D}_{s_0})$ is a spectral triple. 

On the other hand, if $a \in C(\Omega)$ is such that $[\mathcal{D}_s,L_a]$ is a bounded operator, then  
\begin{equation}\label{eq:spectral-radius-of-[DL]}
|a(\tau_w(x)) - a(\tau_w(y))| \leq   \|[D^s,L_a] \| \mathbb{J}^s_w(x)
\end{equation}
for all $x,y \in U \cap R$, and $w \in W^\ast_U$ and $U \in \mathcal{U}$. So assume that $x,y \in \Omega$ and that $\epsilon > 0$. By uniform continuity of $a$, there exists $\delta >0$ such that $d(z,\tilde{z})< \delta$ implies $|a(z)-a(\tilde{z})|< \epsilon$.  
Now choose $n \in \mathcal{N}$ such that $\diam(\tau_w(U)) \leq \delta$ for all $w$ with $\ell(w) =n$. It then follows from condition (C2) there are  $z_1, \ldots z_k$ and $w_1, \ldots w_k$ satisfying the conditions in (C2) and such that $d(x,z_1), d(z_k,y) < \delta$. Hence, 
\begin{align*}
 d(x,y) & \geq  d(\tau_{u_1}(z_1),\tau_{u_k}(z_k)) - 2 \delta 
 \geq \frac{1}{C_2}\sum_{j=1}^{k-1} d(\tau_{w_{j}}(z_{j}), \tau_{w_{j+1}}(z_{j+1}))  - 2 \delta\\
 & \geq \frac{1}{C_1C_2}\sum_{j=1}^{k-1} \mathbb{J}^{s_0}_{w_{j}}(z_{j})   - 2 \delta
 {.}
\end{align*}
The estimate combined with \eqref{eq:spectral-radius-of-[DL]} then implies that 
\begin{align*}
| a(x) -  a(y)| 
\leq &|a(x) - a(\tau_{u_1}(z_1))| + 
\sum_{j=1}^{k-1} |a(\tau_{w_{j}}(z_{j}))- a(\tau_{w_{j+1}}(z_{j+1}))
 |  \\ & + | a(\tau_{u_1}(z_1)) - a(y)| \\
\leq &  \|[\mathcal{D}_s,L_a] \| \sum_{j=1}^{k-1} \mathbb{J}^{s_0}_{w_{j}}(z_{j}) + 2 \epsilon \leq C_1 C_2 (d(x,y) + 2\delta)    + 2 \epsilon
.
\end{align*}
As $\epsilon$ can be arbitrarily chosen, we obtain that $\hbox{Lip}(a) \leq C_1C_2$. 
\end{proof}
 
Note that the above theorem is applicable to the second example in Remark \ref{remark:s_0}. 
Namely it suffices to define $\mathcal{U}$ as the set open squares of the form $(x,x+ 1/2) \times (y, y + 1/2)$ for $x, y \in \{0,1/4,1/2,3/4\}$ and $R = \{(k/8, l/8): k,l = 1,3,5,7\}$. For a more interesting example, we refer to section \ref{subsec-sierpinski}.

\subsection{Subshifts of finite type} \label{subsec:sft}
The motivation of this section is to obtain an analogous result for totally disconnected spaces. The applications we have in mind are conformal graph directed Markov systems (see \cite{Mauldin-Urbanski--Graph-Directed-Markov-Systems--2003}) with a totally disconnected limit set. {For ease of exposition,} we only formulate the result in terms of shift spaces.
Therefore, we now recall the definition of a topologically mixing subshift of finite type.
\begin{definition}
Assume that $\mathcal{A} = \{1, \ldots, k\}$ for some $k> 1$, $B= (a_{ij})\in \{ 0,1\}_{k\times k}$ and set
\[\Omega   :=  \{(x_i:i \in \N \cup \{0\}) : x_n \in \mathcal{A}, \mathbf{1}_{i}(x_n)\mathbf{1}_{j}(x_{n+1}) \leq a_{ij} \, \forall i,j \in \mathcal{A}, n\in \N \cup \{0\} \}.
\]
We then say that $(\Omega,T)$ is a \emph{subshift of finite type}, where $T$ is the shift map $T(x_1,x_2,\ldots ):= (x_2,x_3,\ldots )$. In this setting, we refer to $\mathcal{A}$ as the \emph{alphabet} or the \emph{set of states}. 
Moreover, we say that the subshift of finite type is \emph{aperiodic} if there exists  $n_0 \in \N$  such that all coordinate of $B^{n_0}$ are strictly positive.    
\end{definition}
Recall that, as it is well known, $T$ is topologically mixing if and only if $A$ is aperiodic. Furthermore, $d((x_i: i \geq 0),(y_i :i \geq 0)) := 2^{- \min\{i: x_i \neq y_i\}}$ defines a metric on $\Omega$ such that $(\Omega,d)$ is compact and totally disconnected and $T: \Omega \to \Omega$ is Ruelle expanding with parameter $1/2$. 

We now fix $J: \Omega \to (0,\infty)$ such that $g:= \log J$ is $\omega_{0, \beta \textrm{log}}$-Hölder for some  $\beta > 1$ and that $\rho_g=1$, with $\rho_g$ given by Proposition \ref{prop:eigenfunction}. We now use the results in Section \ref{sec-preliminares} in order to define a new metric. That is, with $m_g$ given by Corollary \ref{cor:decay-to-conformal-measure}, assume that  $x,y \in \Omega$ with $d(x,y) = 2^{-k}< 1$. That is, the first $k$ coordinates of $x$ and $y$ coincide whereas the $(k+1)$-th are different. In particular, 
\[ d_{m_g}(x,y) := m_g(\{ (z_i) : z_i = x_i \hbox{ for }i = 0,1, \ldots,k-1  \}) = m_g(\{ z: d(x,z) \leq 2^{-k}\}) \] 
satisfies $d_{m_g}(x,y) = d_{m_g}(y,x)$. Furthermore, if $d(x,y) =1$, set $d_{m_g}(x,y) =1$. It now follows immediately from the definition that $d_{m_g}$ satisfies the triangle inequality. A further important ingredient is \eqref{eq:distortion-expanding} which implies that $g_k(x) \asymp g_k(y)$ whenever $d(x,y) \leq 2^{-k}$. Hence, as $m_g$ is a conformal measure, it follows that 
\[ m_g( T^k (A)) = \int_A \frac{1}{g_k(x)} dm_g  \]
for any $A$ such that $T^k$ is bimeasurable on $A$. In particular, \eqref{eq:distortion-expanding} implies that 
\begin{equation} \label{eq:new-metric-shift}
m_g(\{ z: d(x,z) \leq 2^{-k}\}) \asymp e^{g_k(x)}.
\end{equation}
Finally, as $\mathcal{L}^{n_0}_{\overline{g}}(\mathbf{1}) = \mathbf{1}$, it follows that $\sup \overline{g}_{n_0} < 0$. In particular, $\sup g_k \to - \infty$, which implies that $d(x,y) = 0 $ if and only if $x=y$. Hence, $d_{m_g}$ is a metric. With respect to this metric, the following holds. 

\begin{theorem} \label{theo:spectral-metric-sft}
 Assume that $(\Omega,T)$ is a topologically mixing subshift of finite type, that $\log J$ is a $\omega_{0, \beta \textrm{log}}$-Hölder continuous potential for some $\beta > 1$ and that $\rho_{\log J}=1$. Moreover, assume that $R \subset \Omega$ is a finite set such each cylinder of length (i.e. ball of radius 1/2) contains precisely two elements.       
Then $(H,A,\mathcal{D}_{1})$ is a spectral triple and $ d_{\hbox{\tiny spec}} \asymp W_{d_{m_g}}$,  where  $W_{d_{m_g}}$ refers to the Wasserstein distance with respect to $d_{m_g}$.
\end{theorem}

\begin{proof} Observe that \eqref{eq:new-metric-shift} implies Condition (C1) of Theorem \ref{theo:spectral-metric}. Hence, 
 $(H,A,\mathcal{D}_{1})$ is a spectral triple. We omit the proof of the remaining statement as it is significantly simpler than the one in Theorem \ref{theo:spectral-metric}. 
\end{proof}

\subsection{A spectral triple associated to the Sierpiński gasket}
\label{subsec-sierpinski}

We now give an example of a spectral triple associated to a connected fractal set, for which Connes' pseudometric in fact is a metric, in contrast to the example given in Remark \ref{remark:s_0}. 

Instead of working directly with the Sierpiński gasket, we consider 4 copies of this well known fractal, placed on the regular octohedron as illustrated in Figure \ref{fig:sierpinski}. The reason for this procedure stems from the fact that we want to define the Sierpiński gasket as a repeller of a Ruelle expanding map which only is possible after taking care of the three extremal points of the standard Sierpiński gasket.
\begin{figure}[htb]
\centering
\def\svgwidth{0.8\textwidth}
%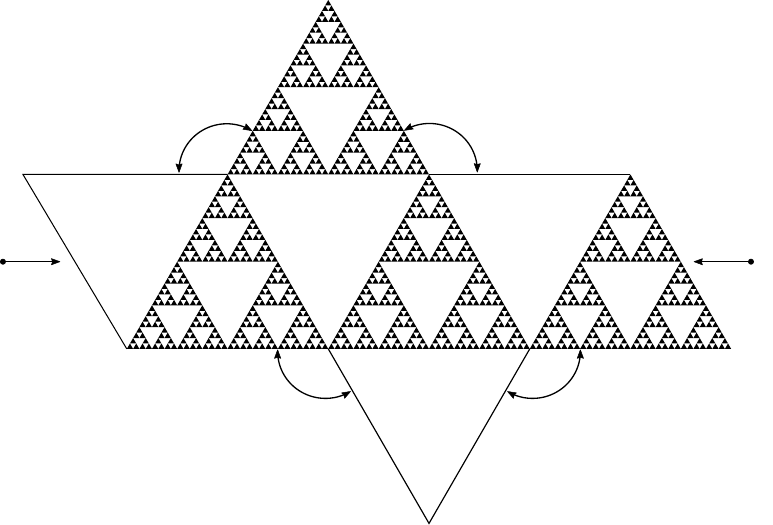
%% Creator: Inkscape 1.1.2 (0a00cf5339, 2022-02-04, custom), www.inkscape.org
%% PDF/EPS/PS + LaTeX output extension by Johan Engelen, 2010
%% Accompanies image file 'sierpinski-octohedron.pdf' (pdf, eps, ps)
%%
%% To include the image in your LaTeX document, write
%%   \input{<filename>.pdf_tex}
%%  instead of
%%   \includegraphics{<filename>.pdf}
%% To scale the image, write
%%   \def\svgwidth{<desired width>}
%%   \input{<filename>.pdf_tex}
%%  instead of
%%   \includegraphics[width=<desired width>]{<filename>.pdf}
%%
%% Images with a different path to the parent latex file can
%% be accessed with the `import' package (which may need to be
%% installed) using
%%   \usepackage{import}
%% in the preamble, and then including the image with
%%   \import{<path to file>}{<filename>.pdf_tex}
%% Alternatively, one can specify
%%   \graphicspath{{<path to file>/}}
%%
%% For more information, please see info/svg-inkscape on CTAN:
%%   http://tug.ctan.org/tex-archive/info/svg-inkscape
%%
\begingroup%
  \makeatletter%
  \providecommand\color[2][]{%
    \errmessage{(Inkscape) Color is used for the text in Inkscape, but the package 'color.sty' is not loaded}%
    \renewcommand\color[2][]{}%
  }%
  \providecommand\transparent[1]{%
    \errmessage{(Inkscape) Transparency is used (non-zero) for the text in Inkscape, but the package 'transparent.sty' is not loaded}%
    \renewcommand\transparent[1]{}%
  }%
  \providecommand\rotatebox[2]{#2}%
  \newcommand*\fsize{\dimexpr\f@size pt\relax}%
  \newcommand*\lineheight[1]{\fontsize{\fsize}{#1\fsize}\selectfont}%
  \ifx\svgwidth\undefined%
    \setlength{\unitlength}{370.60513559bp}%
    \ifx\svgscale\undefined%
      \relax%
    \else%
      \setlength{\unitlength}{\unitlength * \real{\svgscale}}%
    \fi%
  \else%
    \setlength{\unitlength}{\svgwidth}%
  \fi%
  \global\let\svgwidth\undefined%
  \global\let\svgscale\undefined%
  \makeatother%
  \begin{picture}(1,0.67942116)%
    \lineheight{1}%
    \setlength\tabcolsep{0pt}%
    \put(0,0){\includegraphics[width=\unitlength,page=1]{sierpinski-octohedron.pdf}}%
    \put(0.27768164,0.15638876){\makebox(0,0)[lt]{\lineheight{1.25}\smash{\begin{tabular}[t]{l}$\triangle_2$\end{tabular}}}}%
    \put(0.54076482,0.15638876){\makebox(0,0)[lt]{\lineheight{1.25}\smash{\begin{tabular}[t]{l}$\triangle_3$\end{tabular}}}}%
    \put(0.79980063,0.15638876){\makebox(0,0)[lt]{\lineheight{1.25}\smash{\begin{tabular}[t]{l}$\triangle_4$\end{tabular}}}}%
    \put(0.54076482,0.56922706){\makebox(0,0)[lt]{\lineheight{1.25}\smash{\begin{tabular}[t]{l}$\triangle_1$\end{tabular}}}}%
    \put(0.84404425,0.42977003){\makebox(0,0)[lt]{\lineheight{1.25}\smash{\begin{tabular}[t]{l}$x_1$\end{tabular}}}}%
    \put(0.58312232,0.42976193){\makebox(0,0)[lt]{\lineheight{1.25}\smash{\begin{tabular}[t]{l}$x_2$\end{tabular}}}}%
    \put(0.32628019,0.42976193){\makebox(0,0)[lt]{\lineheight{1.25}\smash{\begin{tabular}[t]{l}$x_3$\end{tabular}}}}%
    \put(0.95737241,0.2031137){\makebox(0,0)[lt]{\lineheight{1.25}\smash{\begin{tabular}[t]{l}$x_4$\end{tabular}}}}%
    \put(0.69645049,0.20310558){\makebox(0,0)[lt]{\lineheight{1.25}\smash{\begin{tabular}[t]{l}$x_6$\end{tabular}}}}%
    \put(0.45175066,0.20310558){\makebox(0,0)[lt]{\lineheight{1.25}\smash{\begin{tabular}[t]{l}$x_5$\end{tabular}}}}%
    \put(0.19052118,0.20310558){\makebox(0,0)[lt]{\lineheight{1.25}\smash{\begin{tabular}[t]{l}$x_4$\end{tabular}}}}%
    \put(0.45267618,0.65574368){\makebox(0,0)[lt]{\lineheight{1.25}\smash{\begin{tabular}[t]{l}$x_1$\end{tabular}}}}%
  \end{picture}%
\endgroup%
\caption{An expanding map on the union of 4 Sierpiński gaskets}
\label{fig:sierpinski}
\end{figure}
That is, with $\triangle_1$ to $\triangle_4$ referring to the copies of the standard Sierpiński gasket and endpoints $x_1$ to $x_6$ as indicated in Figure \ref{fig:sierpinski}, we define $T : X\to X$, where $X:= \bigcup_{i=1}^4 \triangle_i$, to be the piecewise, orientating preserving similarity which expands distances by 2 such that
\[ 
x_1 \mapsto x_3,\; x_2 \mapsto x_2,\; x_3 \mapsto x_1,\; 
x_4 \mapsto x_4,\; x_5 \mapsto x_6,\; x_6 \mapsto x_5. 
\]
In terms of the action on $\triangle_i$, this means that
\begin{align*}
 \textstyle \triangle_1 \mapsto \bigcup_{i \neq 1} \triangle_i, \; \triangle_2 \mapsto \bigcup_{i \neq 4} \triangle_i, \; \triangle_3 \mapsto \bigcup_{i \neq 3} \triangle_i, \; \triangle_4 \mapsto \bigcup_{i \neq 2} \triangle_i.
\end{align*}
It is now straightforward to check that, that $T$ is continuous and that each point has three preimages. We now show how to apply Theorem \ref{theo:spectral-metric}. Let $d$ refer to the geodesic metric {on} $X$, that is the distance between two points is the Euclidean length of the shortest path between these points (as in, e.g., \cite{Christensen-Ivan-Lapidus--Dirac-Operators-And-Spectral--AM2008}). Furthermore, let $U_i$ be the $\epsilon$-neighbourhood of $\triangle_i$ for $\epsilon$ sufficiently small such that $T|_{U_i}$ is a homeomorphism. In particular, the connected components of $U_i \cap U_j$ are then open balls of radius $\epsilon$ whose centers are elements of $R = \{x_1,\ldots x_6\}$.

We now consider the constant function $ J = 1/3$ and the action of $\mathcal{L}_{\log J}$ on $C(X)$. As each $x\in X$ has three preimages, we have that  $\mathcal{L}_{\log J}(\mathbf{1}) = \mathbf{1}$. Moreover, as $\tau_w$ contracts by $2^{-\ell(w)}$ and $J$ is constant, it follows that 
\[ d(\tau_w(x) , \tau_w(y) ) = d(x,y) 2^{-\ell(w)} = d(x,y) \mathbb{J}^{\log 2 / \log 3}_w(x) \] 
for any $x,y \in R \cap U_i$ and $w \in W_{U_i}^\ast$ and $i=1, \ldots 4$. Hence, hypothesis (C1) is satisfied with respect to $s = \log 2 / \log 3$. In order to verify (C2), we make use of the fact that the Sierpiński gasket is a geodesic space.
That is, the distance between two points is defined as infimum over all path lengths of paths from $x$ to $y$ and this infimum is realized by a curve, referred to as geodesic, from $x$ to $y$. In particular, , for $x,y \in T^{-n}(R)$, there exists a curve in $X$ from $x$ to $y$ which realizes the (geodesic) distance between $x$ and $y$. 
Moreover, this curve has subsequent visits to elements of $T^{-n}(R)$. This then defines a sequence $z_1,\ldots z_k \in  T^{-n}(R)$ and $w_1, \ldots w_k \in W^n$  such that in fact
\[  d(x,y) =  \sum_{j=1}^{k-1} d(\tau_{w_{j}}(z_{j}), \tau_{w_{j+1}}(z_{j+1})).\]         
Hence, Theorem \ref{theo:spectral-metric} is applicable and one obtains that the spectral metric associated to the global spectral triple is comparable to the Wasserstein distance.  

\begin{remark} \label{remark:analysis on fractals}
{As pointed out by Connes, a spectral triple establishes a differential calculus on the underlying space by considering $[D_s,L_a]$ as a replacement for the gradient of $a$. However, in the context of the Sierpiński gasket, a differential calculus historically was first established through the construction of Laplacians and Dirichlet forms as a by-product of the construction of the Brownian motion on the gasket.

The first breakthrough with respect to a detailed analysis of a diffusion is due to Barlow and Perkins in \cite{Barlow-Perkins--Brownian-Motion-On-The--PTRF-1988}, who refined the known construction as a limit of a simple random walk on rescaled copies of the gasket such that they were able to provide precise asymptotics for the tails of the transition probabilities in terms of the Hausdorff dimension and the dimension of the walk. Thereafter, the community started to analyse the concept behind their work in terms of Dirichlet forms, which is the natural choice in the context of diffusions. It turned out that this Dirichlet form can be constructed as a the limit of finite dimensional Dirichlet forms, and that the limit satisfies a fixed point equation, which lead to elegant criteria for the uniqueness of the diffusion by Sabot in  \cite{Sabot--Existence-And-Uniqueness-Of--ASENS4-1997}. In particular, this justified to speak about the Brownian motion on Sierpiński's gasket.

On the other hand, Kigami and Lapidus already started in the early 1990ies to work towards a differential calculus on fractals by analysing Weyl's formula for the Laplacian in \cite{Kigami-Lapidus--Weyl-S-Problem-For--CMP-1993} and its relation to non-commutative geometry in terms of the representation of the volume as a Dixmier trace (\cite{Lapidus--Analysis-On-Fractals-Laplacians--TMNA-1994,Kigami-Lapidus--Self-Similarity-Of-Volume--CMP-2001}) and spectral triples (\cite{Christensen-Ivan-Lapidus--Dirac-Operators-And-Spectral--AM2008}).

We now give a brief comparison of the constructions of Christensen, Ivan and Lapidus in \cite{Christensen-Ivan-Lapidus--Dirac-Operators-And-Spectral--AM2008} and of Cipriani, Guido, Isola and Sauvageot in \cite{Cipriani-Guido-Isola--Spectral-Triples-For-The--JFA2014} with the one in here. In \cite{Christensen-Ivan-Lapidus--Dirac-Operators-And-Spectral--AM2008}, the authors start with the construction of a spectral triple on the unit interval based on the observation that the usual derivative acts as a multiplication operator on a Fourier series. After that, the spectral triple on the gasket is obtained as an infinite direct product of scaled copies of this initial triple. In \cite{Cipriani-Guido-Isola--Spectral-Triples-For-The--JFA2014}, the focus of the authors is on the holes or \emph{lacunas} of the gasket. That is, they first construct a spectral triple for quasicircles and then again consider an infinite direct product of these triples.

The approach in here is different as it is based on a difference operator instead of a derivative (or multiplication operator in terms of Fourier series). Hence, it is natural to work here with Lipschitz functions instead of square integrable functions. Due to Kantorovich's duality, the proof that the spectral metric is equivalent to the ambient metric becomes elementary (cf. Theorems \ref{theo:spectral-metric} and \ref{theo:spectral-metric-sft}). Furthermore, it follows as in \cite{Christensen-Ivan-Lapidus--Dirac-Operators-And-Spectral--AM2008} that the Hausdorff measure can be recovered by identifying the Dixmier trace of $L_a|D_0|^{-1}$ with the integral of $a$ (see Corollaries \ref{cor:Dixmier-negative-potential} and \ref{cor:Dixmier-normalized-potential} below). It is also worth pointing out that Cipriani, Guido, Isola and Sauvageot show how to reconstruct the standard Dirichlet form on the Sierpiński from their spectral triple (Corollary 4.24 in \cite{Cipriani-Guido-Isola--Spectral-Triples-For-The--JFA2014}).
}
\end{remark}

\section{Dixmier Trace Representation of Gibbs Measures}
\label{sec-zeta-dixmier}

We now analyse spectral triples for a given function $J: \Omega \to (0,1)$ of higher regularity and determine the Dixmier trace of $L_a|D_s|^{-1}$, with $L_a$ and $D_s$ as defined in \eqref{eq:Definition of D_s}. In here, we do not necessary require that $L_a$ and $D_s$ belong to a spectral triple, which allows to include the following combinations of Ruelle expansive maps and functions $\log J$ of lower regularity. 

\begin{enumerate}
\item[(E)] Assume that $T:\Omega \to \Omega$ is topologically mixing. Furthermore, assume that $T$ is either Ruelle expanding and  $\log J$ is $\omega_{0,\beta \textrm{log}}$-Hölder continuous for some $\beta > 1$, or  $T$ is Ruelle expansive with contraction rate $t(1 + nt^{-1/\beta} + n)^{-\beta}$  and $\log J$ is $\omega_{\alpha,0\textrm{log}}$-Hölder with $\alpha\beta > 1$, respectively.  
\end{enumerate}

Note that it follows from the results in Section \ref{sec-preliminares} that under these conditions the pressure function
\[ P(s) := \lim_{n \to \infty} \frac{1}{n} \log \mathcal{L}^n_{s \log J}(\mathbf{1})(z_0),\] 
is well defined and does not depend on $z_0 \in \Omega$. Moreover, if $P(s)$ is finite, then $\rho_s := \exp(P(s))$ is the spectral radius of the action of $L_{s\log J}$ on the space of $\omega^\dagger$-Hölder continuous functions with $\omega^\dagger = \omega_{0,(\beta -1)\textrm{log}}$ or $\omega^\dagger = \omega_{\alpha - 1/\beta,\textrm{log}}$, respectively.  
 
Now assume that  $h_s$, $\mu_s$ and $m_s$ are constructed as in \eqref{eq:definition-h} and Corollaries \ref{cor:decay-to-invariante-measure} and \ref{cor:decay-to-conformal-measure} for the potential $ s \log J$. 
As $T$ is locally invertible and the preimages of the covering $\mathcal{U}$ generate the Borel $\sigma$-algebra, it follows from the Rokhlin formula for the  Kolmogorov-Sinai entropy $\mathfrak{h}(\mu_s)$ that 
\begin{align*}
 \mathfrak{h}(\mu_s) &= \int \log \frac{d\mu_s\circ T}{d\mu_s} d\mu_s = 
\int - s\log (J)  - \log h_s + \log h_s \circ T + P(s) d\mu_s\\ &= P(s)  -  s\int \log (J) d\mu_s.
\end{align*}
\begin{theorem}
 \label{theo:dixmier-for-Ruelle-expansive-and-expanding} Assume that (E) holds and that, for some ${\delta}>0$, $P(\delta)=0$ and $\mathfrak{h}(\mu_{\delta})> 0$. Then  
 for any $\omega^\dagger$-Hölder continuous and positive function $a$ and $x\in \Omega$,
\[   
\textrm{Tr}_{\omega}( L_a \, |D_{\delta}|^{-1}) =  \frac{2 h_{\delta}(x)}{\delta \mathfrak{h}(\mu_{\delta})}   \int a \,  d m_{\delta} .
\]
\end{theorem}

Note that the Rokhlin formula provides a simple condition for $\mathfrak{h}(\mu_{\delta})> 0$. Namely, as $P(\delta)=0$, it follows that $\delta\int \log J d\mu = - \mathfrak{h}(\mu_{\delta})$. In particular, $\mathfrak{h}(\mu_{\delta})> 0$ whenever $J < 1$. The proof of the following result is based on this observation and also indicates that the noncommutative integral only is non-trivial for a single choice of the parameter $s$. 

\begin{corollary} \label{cor:Dixmier-negative-potential} 
Assume that (E) holds and that $J < 1$. Then there exists a unique $\delta_0>0$ such that $P(\delta_0)=0$. Moreover, for any $\omega^\dagger$-Hölder continuous function $a$ with $a \geq 0$, $a\neq 0$, $x\in \Omega$ and $s \geq 0$, 
\[\textrm{Tr}_{\omega}( L_a \, |D_{s}|^{-1}) = 
\begin{cases}
\infty &: \, 0 \leq s < \delta_0,\\
 \frac{2 h_{\delta}(x)}{\delta \mathfrak{h}(\mu_{\delta})}   \int a dm_{\delta}    &: \,   s = \delta_0,\\
0      &: \,   s > \delta_0.
\end{cases}
\]
\end{corollary}
  
\begin{proof}  We first prove that $T$ is non-invertible. So assume that $T$ is invertible. It then follows from the contraction along preimages, that 
\[\diam(\Omega) = \diam (T^{-n}(\Omega)) \leq \sum_{U \in \mathcal{U}} \diam (T^{-n}(U)) \xrightarrow{n \to \infty}  0, \]
{which is a contradiction}. Hence, $T$ is non-invertible and there  exists $x\in \Omega$ with at least two preimages. Furthermore, as $T$ is topologically mixing, there is $n_0$ such that $\# T^{-n_0}(\{ x\})\geq 2$ for all $x \in \Omega$, which implies that $P(0) \geq \log 2/n_0> 0$.

Moreover, it follows from Lemma \ref{lem:differentiability} that $P'(s) \leq \max (\log J) < 0$. Hence, there exists a unique $\delta_0>0$ with $P(\delta_0)=0$. As $\mathfrak{h}(\mu_{\delta_0})> 0$ by Rokhlin's formula, Theorem \ref{theo:dixmier-for-Ruelle-expansive-and-expanding} is applicable and provides the statement for $s = \delta_0$.  

It remains to prove the cases for $s \neq \delta_0$. In order to do so, note that for $s > \delta$, it follows from $\rho^s < 1$ that  $\sum_{n=1}^\infty \mathcal{L}_{s \log J}^n(a) < \infty$. By applying Theorem \ref{theo:Tauber}, it then follows as in the proof of Theorem \ref{theo:dixmier-for-Ruelle-expansive-and-expanding} that $\textrm{Tr}_{\omega}( L_a \, |D_{s}|^{-1}) =0$ for $s > \delta$. 

On the other hand, note that it follows from the fact that the support of $m_\delta$ is $\Omega$, that $\int a dm_\delta > 0$. Hence, by Theorem \ref{theo:dixmier-for-Ruelle-expansive-and-expanding}, 
$\textrm{Tr}_{\omega}( L_a \, |D_{s}|^{-1})  \geq \textrm{Tr}_{\omega}( L_a \, |D_{\delta}|^{-1}) > 0$ for $s < \delta_0$.
It immediately follows from this that  
$\textrm{Tr}_{\omega}( L_a \, |D_{s}|^{-1}) = \infty$ for $s < \delta_0$.
\end{proof}
  
We give a further application of Theorem \ref{theo:dixmier-for-Ruelle-expansive-and-expanding} to normalized potentials, that is $\mathcal{L}_{\log J}(\mathbf{1})=\mathbf{1}$. In the next 
   corollary $\mu$ refers to the equilibrium state associated to  $\log J$.

\begin{corollary} 
\label{cor:Dixmier-normalized-potential} Assume that (E) holds and that $\log J$ is normalized. Then $\delta =1$ is the unique root of $P(s)$ for $s \geq 0$, the non-commutative integral $a \to \textrm{Tr}_{\omega}( L_a \, |D_{s}|^{-1})$ is non-trivial only for $s=1$ and,
for any $\omega^\dagger$-Hölder continuous and positive function $a$ and $x\in \Omega$,
\[   
\textrm{Tr}_{\omega}( L_a \, |D_{1}|^{-1}) =  \frac{2}{\mathfrak{h}(\mu)}   \int a d\mu . 
\]
\end{corollary}
  
\begin{proof} 
Observe that it follows from the proof of Corollary \ref{cor:Dixmier-negative-potential} that there exists $n_0$ such that $\# T^{-n_0}\{ x\}\geq 2$ for all $x \in \Omega$. As $\log J$ is normalized, this implies that 
$\prod_{k=0}^{n_0-1}  J \circ T^k(x) < 1$ for all $x \in \Omega$. The result then follows from applying Corollary \ref{cor:Dixmier-negative-potential} to $T^{n_0}$.   
\end{proof}

\subsection{Proof of Theorem \ref{theo:dixmier-for-Ruelle-expansive-and-expanding}}

The remaining part of this section is devoted to the proof of Theorem \ref{theo:dixmier-for-Ruelle-expansive-and-expanding}. 
For the proof of this theorem, it is crucial to understand the spectrum of $L_{a}|D_s|^{-1}$. 
As the absolute value of $(\epsilon_1,\epsilon_2) \mapsto (\epsilon_2,\epsilon_1)$ is the identity,  it follows that
\[ L_{a}|D_s|^{-1}  (\bigoplus_{w \in W^*}  \binom{\epsilon_1(w) }{\epsilon_2(w) })
=
\bigoplus_{w \in W^*}
\binom{a( \tau_w(x)) \mathbb{J}^s_w(x)\, \epsilon_1(w) }{a(\tau_w(y))\mathbb{J}^s_w(x)\,\epsilon_2(w) }.
\]
In particular, $L_{a}|D_s|^{-1}$ has pure point spectrum and each element of the spectrum is of the form $a(\tau_w(x)) \mathbb{J}^s_w(x)$ or $a( \tau_w(y)) \mathbb{J}^s_w(x)$. In order to determine the Dixmier trace, we consider the  $\zeta$- functions
\[
\zeta_+ (s)
=
\sum_{k=1}^\infty
\sum_{ \substack{ w\in W^* \\ l(w)=k }}
a(\tau_w(x))\mathbb{J}(\tau_w(x))^{s}
,\quad
\zeta_{-} (s)
=
\sum_{k=1}^\infty
\sum_{\substack{ w\in W^* \\ l(w)=k }}
a(\tau_w(y))\mathbb{J}(\tau_w(x))^{s}.
\]
The relation to the Dixmier trace is now given by the following version of the Hardy-Littlewood
Tauberian theorem which can be found in  \cite{MR1303779},  Chapter IV.2, Prop. 4.  
\begin{theorem}\label{theo:Tauber}
Assume that $(b_n)$ is a decreasing sequence of positive numbers. Then the following statements are equivalent.
\begin{enumerate}
 \item $ \lim_{s \to 1+} (s-1) \sum_{k=1}^\infty b_k^s = 1$.
 \item $ \lim_{n \to \infty}
\frac{1}{\log  n }
\sum_{k=1}^n b_k   = 1$.
\end{enumerate}
\end{theorem}
In particular, we have for  $c> 0$ and $\delta>0$ that  
\begin{enumerate}
 \item $ \lim_{s \to \delta+} (s-\delta) \sum_{k=1}^\infty b_k^s = c \delta $,
 \item $ \lim_{n \to \infty}
\frac{1}{\log  n }
\sum_{k=1}^n b_k^{\delta}   = c$
\end{enumerate}
are equivalent. We now assume that $\min a \geq 1$. As this implies that $\|a^{s-\delta} -1 \|_\infty \to 0$ as $s \to \delta+$, it follows that        
$\lim_{s \to \delta+} (s-\delta) (\zeta_{+} (s) + \zeta_{-} (s))  = c$ for some $c  > 0$ implies that 
\[   \textrm{Tr}_{\omega}( L_{a}  |D_{\delta}|^{-1})
= \lim_{n \to \infty} \frac{1}{\log n} \sum_{k=1}^{n}b_k  = \frac{c}{\delta},\]
where the $b_n$ refer to the $n$-th largest eigenvalue of $L_{a} |D_{\delta}|^{-1}$ up to multiplicity. Hence, it suffices to determine $c$. 
However, as  
\begin{align*}
\zeta_+(s) + \zeta_- (s) & = \sum_{\substack{ w\in W^*  }}
( a(\tau_w(x)) + a^s(\tau_w(y)))\mathbb{J}(\tau_w(x))^{s}\\
& = 2  \sum_{n=1}^\infty \mathcal{L}^n_{s \log J} (a)(x) + 
\sum_{\substack{ w\in W^*  }}
( a(\tau_w(y)) - a(\tau_w(y)))\mathbb{J}(\tau_w(x))^{s} \\
& =: 2  \sum_{n=1}^\infty \mathcal{L}^n_{s \log J} (a)(x)  + R(s),
\end{align*}
it remains to understand the asymptotics of $(s-\delta)\sum_n \mathcal{L}^n_{s \log J}(a)$ and $(s-\delta)R(s)$.

\medskip
\noindent\textsc{Step 1. The asymptotics of $(s-\delta)\zeta_+(s)$}.  
In order to do so, we apply the results from Section \ref{sec-preliminares}. Set $\mathcal{L}_s:= \mathcal{L}_{s \log J}$ and let $\rho_s$, $h_s$, $m_s$ and $\mu_s$ refer to the objects given by Proposition \ref{prop:eigenfunction}, \eqref{eq:definition-h} and Corollary \ref{cor:decay-to-invariante-measure}. In particular, it follows from Theorem \ref{theo:continuity} that they vary continuously in $s$ and that for any  $n$ sufficiently large,      
\[ \left| \frac{\mathcal{L}_{s}^n(a)}{\rho_s^n h_s} -  m_s(\varphi) \right| \leq \kappa \frac{\textrm{Höl}_{\omega^\dagger}(a)}{(c_s + n)^\gamma}, \]
where $\kappa$ only depends on $T$ and $c_s > 1$ is continuous in $s$ by Theorem \ref{theo:main-contraction-result}. 
We now show that $\rho_s < 1$ for $s> \delta$ sufficiently close to $\delta$.  

\begin{lemma} \label{lem:differentiability}
Assume that $\gamma > 0$. Then $P(s)$ is differentiable and $dP(s)/ds = \int (\log J) d\mu_s$.
\end{lemma}

\begin{proof} We now write  $A:=\log J$.
It follows from Corollary \ref{cor:decay-to-conformal-measure} that for each $\omega^\dagger$-Hölder continuous function $f$,   
\[ \| \mathcal{L}_{s}(a)/\mathcal{L}_{s}(\mathbf{1}) -  m_s(f) \|_\infty \leq \kappa \textrm{Höl}_{\omega^\dagger}(f) e^{\kappa C_{sA}} n^{-\gamma}.\]
 The uniform estimate above now implies that 
\begin{align*}
\frac{d}{ds} \log \frac{1}{n}\mathcal{L}_{s}^n(\mathbf{1})(z_0) & = \frac{1}{n} \frac{\sum_{\sigma^n(x) = z_0} A_n(x) e^{sA_n(x)} }{\mathcal{L}_{sA}^n(\mathbf{1})(z_0)} = \frac{ \mathcal{L}_{s}^n(\frac{1}{n}  A_n)(z_0) }{\mathcal{L}_{s}^n(\mathbf{1})(z_0)}\\
& = \frac{1}{n} \int A_n dm_s  \pm \kappa n^{-\gamma}  \textrm{Höl}_{\omega^\dagger}(A) \\
& = \frac{1}{n} \sum_{j=0}^{n-1} \int A \circ T^j   dm_s  \pm \kappa n^{-\gamma}  \textrm{Höl}_{\omega^\dagger}(A)
\\
& = \frac{1}{n} \sum_{j=0}^{n-1} \int A \rho_s^{-j} \mathcal{L}_{sA}^j(\mathbf{1}) dm_s 
 \pm \kappa n^{-\gamma}  \textrm{Höl}_{\omega^\dagger}(A).
\end{align*}
As $\mathcal{L}_{sA}^j(\mathbf{1})/\rho_s^{-j}$ converges uniformly to $h_s$, it follows that  
\[\frac{d}{ds} \log \frac{1}{n}\mathcal{L}_{sA}^n(\mathbf{1})(z_0) \xrightarrow{n \to \infty}  \int A h_s dm_s \]
uniformly for $s  = s_0 \pm \epsilon$, for $s_0> 0$ and $\epsilon > 0$ sufficiently small. Hence, the right hand side coincides with the derivative of the pressure function.
\end{proof}

Note that  $P(\delta)=0$  implies that $\mathfrak{h}(\mu_{\delta}) = - \delta \int  A d\mu_{\delta} > 0$. As $\mu_s$ 
varies continuously in $s$, it hence follows that $P'(s)< 0$ for $\delta <  s < \delta + \epsilon$ for $\epsilon$ sufficiently small. Hence, $P(s)$ is strictly decreasing in $[\delta, \delta + \epsilon]$ and, therefore, $\lambda_s < 1$ for $s \in  (\delta, \delta+ \epsilon]$.  
In particular,    
\begin{align*}
(\ast) & :=  \left|(s-\delta)\zeta_+(s) - \frac{\rho_s (s-\delta)}{1-\rho_s} h_s(x) m_s(a)\right|
 =    (s-\delta) \left| \sum_{n=1}^\infty \mathcal{L}^n_{s} (a)(x)  - \rho_s^n h_s(x) m_s(a)\right| \\
& \leq \textrm{Höl}_{\omega^\dagger}(a) \kappa \| h_s\|_\infty  (s-\delta)  \sum_{n=1}^\infty  \rho_s^n (c_s + n)^{-\gamma}.
\end{align*} 
Now assume that $\gamma > 1$. As $\lim_{s \to \delta+} \rho_s  = 1$, it follows that $\lim_{s \to \delta+} \sum_n \rho_s^n (c_s + n)^{-\gamma}$ exists and is finite by an application of Abel's theorem. Hence, the right hand side tends to $0$ as $s \to \delta+$. 
On the other hand, if $0 < \gamma  <  1$, it follows from a lengthy calculation with the Taylor expansion of $(1 - x)^{\gamma -1}$ that for $0 \leq t < 1$,
 \[
 \sum_{n=1}^\infty t^n n^{-\gamma} \leq \frac{e^{ (1-\gamma)^2\pi^2/12}}{1-\gamma} \left( (1-t)^{\gamma -1}  -1 \right){.}
 \]
In particular, it follows for $0 < \gamma <  1$ from this estimate, Lemma \ref{lem:differentiability} and the continuity of $h_s$ that
\begin{align*}
\limsup_{s \to \delta+ }\; (\ast)  & \leq  \textrm{Höl}_{\omega^\dagger}(a) \kappa   \lim_{s \to \delta+ }  \left( \frac{s-\delta}{\lambda_s -1} \| h_s\|_\infty  \right) \lim_{t\to 1-} (1-t)\sum_{n=1}^\infty t^n n^{-\gamma}  \\
& \ll   \textrm{Höl}_{\omega^\dagger}(a)  \|h_{\delta}\|_\infty \left(- \textstyle \int \log J d\mu_{\delta}\right)^{-1}  \lim_{t\to 1-} (1-t)^\gamma =0.
\end{align*}
This implies that $\lim_{s \to \delta+ }\; (\ast) = 0$. Hence,  
\begin{equation} \label{eq:asymptotics-zeta}
 \lim_{s \to \delta+} (s-\delta)\zeta_+(s) = - h_{\delta}(x) m_{\delta}(a)  \lim_{s \to {\delta}+} \frac{s-1}{e^{P(s)} -1} =   - \frac{h_{\delta}(x)   \int a dm_{\delta}}{\mathfrak{h}(\mu_{\delta})}. 
 \end{equation}

\medskip
\noindent\textsc{Step 2. The asymptotics of $(s-{\delta_0})R(s)$}. Let $c_n$ refer to the contraction rate of $T$. As already shown in Section \ref{sec-preliminares}, it holds in both cases  that $\omega^\dagger(c_n) \ll n^{-\gamma}$. Hence, for $s> {\delta_0}$
\begin{align*}
R(s) & \leq \textrm{Höl}_{\omega^\dagger}(a) \sum_{n=1}^\infty \omega^\dagger(c_n) \mathcal{L}_s^n(\mathbf{1}) \ll  
 \textrm{Höl}_{\omega^\dagger}(a) \sum_{n=1}^\infty  \mathcal{L}_s^n(\mathbf{1}) n^{-\gamma}\\
& \ll \textrm{Höl}_{\omega^\dagger}(a) \sum_{n=1}^\infty  \lambda_s^n n^{-\gamma}.
 \end{align*} 
It then follows from the same argument as above that $\lim_{s \to {\delta}+}(s-1)R(s) =0$. 
This proves the theorem for $a> 0$. The general case then follows by considering $a+1$.

\section{Examples and counterexamples} \label{exampand}
 
In this section, we discuss the existence of spectral triples and the representation of the Dixmier trace as in Theorem \ref{theo:dixmier-for-Ruelle-expansive-and-expanding}. As a first class of examples, we consider  topologically mixing Ruelle expanding maps, equipped with a normalized potential $\log J$, which is $\omega_{0,\beta\textrm{log}}$-Hölder continuous for some $\beta > 1$. In particular, it follows from Proposition \ref{prop:spectral-triple}, that $(H_U,A,D_s)$ is a spectral triple for each $s \in (0,s_0)$, with $s_0$ given by Proposition \ref{prop:spectral-triple}. On the other hand, as 
the representation as a Dixmier trace only holds for the single parameter $\delta =1$ (cf. Corollary \ref{cor:Dixmier-normalized-potential}, it is natural to ask in which cases a spectral triple comes with a non-commutative integral with the same exponent. 

However, the examples and results given above suggest that the relation between $s_0$ and $\delta$ is related to the topological dimension of $\Omega$. For subshifts of finite type, it is shown in Remark \ref{remark:s_0} that $s_0 = \infty$. Hence, one may choose $s = 1$ and obtains a spectral triple with a Dixmier trace representation of the equilibrium state. Moreover, by applying Theorem \ref{theo:spectral-metric-sft}, it follows that after a change of metric, the spectral metric coincides with the Wasserstein distance with respect to the new metric. Or in other words, if $\Omega$ is a Cantor set, then there exist spectral triples with strong properties.

If, on the other hand, the space is connected, the situation might be different. As shown in Section \ref{subsec-sierpinski}, there exists a spectral triple for the parameter $s= \log 2/\log 3$ whose spectral metric coincides with Wasserstein distance. Moreover, by going through the proof of Theorem \ref{theo:spectral-metric}, it follows that $(H_U,A,D_s)$ no longer is a spectral triple for $s > \log 2/\log 3$. However, as the potential is normalized,  the Dixmier representation only holds for $s=1$. 

Hence, from a general point of view, it seems that spectral triples and their spectral metric are associated to curve length whereas the Dixmier trace reflects a kind of volume.

\subsection{The Dyson potential} \label{subsec:dyson}
We now discuss an example from statistical mechanics and do not require that the potential is normalized.  That is, we consider $\Omega := \left\{-1,1\right\}^\mathbb{N}$ equipped with the shift map $T$(i.e. $T((x_i: i \geq 1)) = (x_i: i \geq 2)$) and the metric $d((x_i: i \geq 1),(y_i :i \geq 1)) := 2^{- \min\{i: x_i \neq y_i\}}$. Note that this implies that $T$ is uniformly expanding and $c_n(t) = 2^{-n}t$. For $\alpha > 1$, we then refer to  
\[
\Phi((x_i: i \geq 1))  = x_1 \sum_{n=1}^\infty n^{-\alpha}x_{n+1}
\]
as the \emph{Dyson potential} due to the similarity to Dyson's model of ferromagnetism. We now determine the regularity of $\Phi$. So assume that  $d(x,y)\leq 2^{-k}$ for $k>1$. Then $x_j = y_j$ for $j=1,\ldots k-1$ and 
\[|\Phi(x) - \Phi(y)| \leq \sum_{n=1}^\infty |x_{n+1} - y_{n+1}|  n^{-\alpha} \leq  2  \sum_{n \geq k} n^{-\alpha} \ll k^{1-\alpha}       = \left(-\frac{\log d(x,y)}{\log 2}\right)^{1-\alpha}.\] 
Hence, we have shown that Dyson potential is $\omega_{0,(\alpha-1)\textrm{log}}$-Hölder continuous. In particular, it follows from Corollary \ref{cor:decay-to-invariante-measure} that for $\alpha > 2$,  
\begin{equation} \nonumber
 \| \mathcal{L}_{\overline{\Phi}}(f) -\mu(f) \|_\infty \ll \textrm{Höl}(f) \, n^{2-\alpha},  
\end{equation}
for each $\omega_{0,(\alpha-2)\textrm{log}}$-Hölder continuous function $f$. However, as we see from the next result, Theorem \ref{theo:dixmier-for-Ruelle-expansive-and-expanding} is not immediately applicable. 
 
\begin{lemma} \label{lem:dyson-pressure-is-increasing} Assume that $\alpha > 2$. Then the pressure function $P(s)$ is increasing for $s\geq 0$. Furthermore, $P'(0) = 0$, $P(0) = \log 2$ and  $P(s) > s \max \Phi$ for all $s \geq 0$.
\end{lemma}

\begin{proof}
For $s=0$, $\Phi=1$. Hence, for this parameter, the measure of maximal entropy is the equilibrium state. This implies that $P(0) = \log 2$ and $P'(0) =0$ by symmetry. Moreover, note that 
the Dirac measure $\delta_{\overline{1}}$ on $(1111 \ldots)$ is an invariant measure with $\int \Phi d \delta_{\overline{1}} = \max \Phi$. Hence, by the variational principle, $P(s) \geq s \max \Phi$. 
However, in our setting, equilibrium states are unique (see, e.g.,  \cite{MR2342978}), $\mu_s$ is the unique equilibrium measure and $\mu_s$ is not atomic. Hence, $P(s) > s \max \Phi$. 
Now assume that $\int \Phi d\mu_s < 0$ for some $s> 0$. Moreover, define $\tilde{\mu_s}$ by  $\tilde{\mu_s}([(x_1 \ldots x_n)] ):= {\mu_s}([(-x_1 \ldots -x_n)])$, for $[(x_1 \ldots x_n)]:= \{ (y_i):y_i =x_i \forall i = 1,\ldots n\}$. As the Borel $\sigma$-algebra is generated by these cylinder sets, it follows that $\tilde{\mu_s}$ also is a probability measure.  
As  $\tilde{\mu_s}$ is invariant as well, the variational principle implies that 
\[P(s) \geq \mathfrak{h}(\tilde{\mu_s}) + s\int \Phi  d\tilde{\mu_s} = \mathfrak{h}( {\mu_s})  - s \int \Phi d\mu_s >  \mathfrak{h}( {\mu_s})  + s \int \Phi d\mu_s.  \]
Hence, $\mu_s$ is not an equilibrium measure, 
{which is a contradiction}. Therefore,  $\int \Phi d\nu_s \geq 0$ for all $s \geq 0$. The assertion then follows from Lemma \ref{lem:differentiability}.
\end{proof}

Hence, in case of the Dyson potential, $P(s)\neq 0$ for all $s \in \R$. However, by slightly modifying $\Phi$, we obtain the following for the potential $\Phi_t : = \Phi - t$.

\begin{lemma} \label{lem:dyson-zeta} Assume that $\Phi$ is the Dyson potential and  $\alpha>2$. Then, for each $t > \max \Phi$, the pressure function $s \mapsto P(s)$ is strictly decreasing and there is a unique $\delta$ with $P(\delta)=0$. Furthermore, 
\[\lim_{s \to \delta + }(s  - \delta) \sum_{k=0}^\infty e^{-nt} \mathcal{L}_{s\Phi}^n(f)(z)  
 = \frac{ m_{\delta}(f) h_{\delta}(z)}{\mathfrak{h}(\mu_{\delta})} .   \]  
\end{lemma}

\begin{proof} By the above lemma, there is no zero of $P_{\Phi_t}(s)$ for $t \leq \max \Phi$. On the other hand, as $\Phi_t < 0$,  
it follows from Corollary \ref{cor:Dixmier-negative-potential}, that Theorem \ref{theo:dixmier-for-Ruelle-expansive-and-expanding} is applicable. In particular, the assertion follows from \eqref{eq:asymptotics-zeta} in the proof of Theorem \ref{theo:dixmier-for-Ruelle-expansive-and-expanding}.
\end{proof}

A possible application of this Lemma is the following.
\begin{proposition} \label{prop:dixmier-triple}
 Assume that $\Phi$ is the Dyson potential, that $\alpha> 2$, that $t > \max \Phi$, that $J := e^{\Phi - t}$ and that 
 $x = (-1,x_2, x_3, \ldots)$ and $y = (1,y_2,y_3, \ldots)$. Then there exists a unique $\delta>0$ such that $P(\delta) =0$. Furthermore, $(H, A, \mathcal{D}_\delta)$ as in Theorem \ref{theo:spectral-metric-sft} is a spectral triple whose spectral metric is a metric and
\[  \textrm{Tr}_{\omega}( L_a \, |D_{\delta}|^{-1}) =  \frac{2 (h_\delta(x) + h_\delta(y))}{\mathfrak{h}(\mu)}   \int a dm_\delta  \]  
for any positive and $\omega_{0,(\alpha-2)\textrm{log}}$-Hölder continuous function $a$.
\end{proposition} 

\begin{proof} The statement is a consequence of Theorem \ref{theo:spectral-metric-sft}, Lemma \ref{lem:dyson-zeta} and Theorem \ref{theo:dixmier-for-Ruelle-expansive-and-expanding}.
\end{proof}

\subsection{Potentials from Walters' family} We now assume that $\Omega :=\{0,1\}^\mathbb{N}$, that $T$ is the shift map and that  potential $g:\Omega  \to  \mathbb{R}$  is continuous. We now would like to give examples where the statement of 
Theorem \ref{theo:dixmier-for-Ruelle-expansive-and-expanding} is true, that is  
\begin{align} 
\label{eq:zeta-walters-class}
\lim_{s\to 1^{+}}(s-1) \sum_{k=1}^\infty \mathcal{L}_{sg}^k (a)(x)
 = \frac{1}{c}\int_{\Omega}a\, dm,
\end{align}
for some $c>0$ and a continuous function $a$. In here, we assume that $f$ is in the so-called Walters family, introduced in \cite{MR2342978}.
\begin{definition}
A potential $g$ is in the Walters family if $g$ is continuous and there exist convergent sequences 
$(a_n)_{n \in \mathbb{N}}$, $(b_n)_{n \in \mathbb{N}}$, $(c_n)_{n \in \mathbb{N}}$ and $(d_n)_{n \in \mathbb{N}}$  such that, for any $x \in \Omega$ and all  $n \in \mathbb{N}$,
\[g(0^{n+1}1x) = a_{n+1},\; g(01^n0x) = b_n,\; g(1^{n+1}0x) = c_{n+1},\; g(10^n1x) = d_n. \]
\label{defi1}
\end{definition}
We refer to $a, b, c$ and $d$, respectively, as the limits of the sequences  $a_n, b_n, c_n$ and $d_n$. The relevance of this family is based on the fact that the family is sufficiently rich to include examples of important classes of functions like the Bowen class or functions satisfying Walters condition (see Theorem 1.1  in \cite{MR2342978}). Moreover, it is possible to obtain necessary and sufficient conditions in this class such that Ruelle's operator theorem holds (see Theorem 3.1 in \cite{MR2342978}).

For our purposes, it is in fact sufficient to study the following subclass of Walters' family. Namely, we assume that $g$ is of the form  
\begin{align}
f(x) = \begin{cases}
c_0  &:\, x \in [0], \\
c_n  &:\, x \in [1^n\,0], \hbox{ for } n \in \N, \\
c & \, x = (1,1,\ldots),
\end{cases}
\label{eq1}
\end{align}
for some convergent sequence $(c_n)$ and $c:= \lim_{n\to \infty} c_n$. 

In Appendix \ref{sec112}, we will analyse the action of the Ruelle operator $\mathcal{L}_g$ associated to potentials in this class in more detail in the appendix. In particular, we obtain in Proposition \ref{leel} a simple condition in order to guarantee that the leading eigenvalue of $\mathcal{L}_g$ is equal to 1. We will now give two examples of potentials not covered by Theorem \ref{theo:dixmier-for-Ruelle-expansive-and-expanding} in order to show that in lower regularity the limit  \eqref{eq:zeta-walters-class} only sometimes exists.

\begin{example} \label{kpo} %%We will generalize some estimations of  \cite{FL01,MR1242602}.
For a given sequence $a_k>0$, $k \geq 1$, we consider the associated sequence $b_k=
\frac{a_{k+1}}{a_k}$, $k \geq 1$. Obviously, $(b_k)$ is determined by $(a_k)$ and vice-versa.
Furthermore, we assume that $\sum_{k=1} ^\infty  a_k=\alpha$ is finite, $a_1=1$ and that $\lim_{k \to \infty} b_k$ exists and that $f:\Omega\to\mathbb{R}$ satisfies
\begin{align} \label{rer}
f(x)
=
\begin{cases}
- \log \sum_{j=1}^\infty  a_k &:\,  x\in [0],
\\
\phantom{- } \log b_n  &:\,    x\in [1^n0], n\geq 1{,}
\\
\phantom{- }\log  b&:\, x=(1,1,1,\ldots).
\end{cases}
\end{align}
The classical Hofbauer case (see \cite{Hof}) as described in \cite{FL01,MR1242602,CL} corresponds to $a_k=k^{-\gamma}$, $\gamma>1$, $k\geq 1$. Moreover, for this case one can get an explicit expression for the eigenfunction $h$ and the eigenmeasure $m$ (see \cite{FL01}). If, in addition, $\gamma>2$, then there exists an equilibrium $\mu$ for this potential.

In the general setting, the following holds. Let $h$ be the continuous function with   
\begin{align*} \label{eq:eigenfunction-general-hofbauer}
h(x)  = 
\begin{cases}
  \phantom{\frac{1}{a_{n+1}}} \sum_{j=1}^\infty  a_k &:\,  x\in [0], \\
   \frac{1}{a_{n+1}} \sum_{j = n+1}^\infty  a_j  &:\,    x\in [1^n0], n\geq 1.
\end{cases}
\end{align*}
It follows from a straightforward calculation that $\mathcal{L}_f(h) = h$. Hence, $g = f + \log h - \log h\circ T$ is a normalized potential and the Jacobian $J$ of the associated equilibrium state $\mu$ is of the form $J = e^f h/h\circ T$. 
That is, for $x \in [1^n,0]$, $n\geq 1$, 
\[
J(x) = \frac{\sum_{j=n+1}^{\infty} a_j}{ \sum_{j=n}^{\infty} a_j} = 1- \frac{a_{n}}{\sum_{j=n}^{\infty} a_j}.
\]
We will now analyse the case were $a=1_{[1]}$, that is, the indicator function of the cylinder $[1]$. Our goal in this case is to obtain an explicit expression of the zeta function 
\[\zeta^{+}(s):=   \sum_{k=1}^\infty \mathcal{L}_{sg}^k (a)(z) \]
and to evaluate $\lim_{s \to 1^+}(s-1)\, \zeta^{+} (s)$. We will now assume that $s>1$ 
{and that $z$ belongs to the}\break
{cylinder $[01]$.}
As we will see, the convergence of the series 
$\sum_{n=1}^\infty a_n \log a_n $ {will play a} decisive role.

Note that $\mathcal{L}_{s\,f}^n (\mathbf{1}_{[1]})(z)=\mathcal{L}_{s\,f}^n (\mathbf{1}_{[1]})(0,z)= \mathcal{L}_{s\,f}^n (\mathbf{1}_{[1]})(0,1^d,0 ,\ldots) $, 
for $d>0$.
To simplify the notation we set $L_s^n :=  \mathcal{L}_{s\,f}^n (\mathbf{1}_{[1]})(z)$
for $n\geq 1$.
Then 
\begin{align*}
 L_s^1  = & a_2^s 
 \\
 L_s^2 = &  L^1_s  \frac{1}{\alpha^s} + a_3^s 
 \\
 L_s^3 = &  L^2_s  \frac{1}{\alpha^s} +L^1_s  \frac{1}{\alpha^s} a_2^s+ a_4^s \\
 L_s^4 = &  L^3_s  \frac{1}{\alpha^s} +   L^2_s  \frac{1}{\alpha^s}  a_2^s +   L^1_s  \frac{1}{\alpha^s} a_3^s + a_5^s\\ 
 & \vdots
\end{align*}
By induction one then obtains 
the following renewal equation for $L_s^n$:
%%\begin{equation} \label{opi1}
\[ L_s^n
=
\sum_{k=1}^{n-1} \frac{a_{k}^s}{\alpha^s} L_s^{n-k}
+
a_{n+1}^{s}. \]
%%\end{equation}
From this we obtain the convergence of the following series and
the expression
\[
\sum_{k=1}^{\infty} L_{s}^{k}
=(\sum_{j=1} ^\infty  a_j^s\, -1)
 +  (\sum_{j=1} ^\infty  a_j)^{-s}  \,(\sum_{j=1} ^\infty  a_j^s\,)\,      \sum_{k=1}^{\infty} L_{s}^k.
\]
Therefore  $\zeta_{+}(s)=L_s^1 +  L_s^2 +  L_s^3 +\ldots+  L_s^n+\ldots$ is given in an explicit form by
%%\begin{equation} \label{kjh}
\[ 
\zeta_{+}(s)
=
\frac{    \sum_{j=1} ^\infty  a_j^s -1   }{ 1- \sum_{j=1} ^\infty  a_j^s  \,(\,\sum_{j=1} ^\infty  a_j)^{-s}  }.
\]
%%\end{equation}
Consider the function  $\gamma(s)= \sum_{j=1} ^\infty  a_j^s  \,(\,\sum_{j=1} ^\infty  a_j)^{-s}  $.
 In the case $\gamma$ is
differentiable at the left of $s=1$ there exists
\[
\gamma '(1^{-})=  \lim_{s \to 1} \frac{ \sum_{j=1} ^\infty  a_j^s  \,(\,\sum_{j=1} ^\infty  a_j)^{-s}  -1}{s-1}<0.
\]
Therefore,
\[
\lim_{s \to 1^+}(s-1)\, \zeta^{+} (s)
= \frac{ \sum_{j=1} ^\infty  a_j -1  }{-\,\gamma'(1) },
\]
Now, we estimate
\begin{equation} \label{trf}
 \gamma '(1)  =
 \,\frac{\sum_{n=1}^\infty a_n \log a_n}{\sum a_n} - \,\log(\sum_{n=1}^\infty a_n) .
\end{equation}
Note that $ \sum_{n=1}^\infty a_n \log a_n$ is finite for $a_n=n^{-\gamma}$ and $\gamma>2$. Then, $\gamma '(1)  $ is finite and non-zero.
This case is an example of a potential $f$ which is not in the Walters class but 
\[
\lim_{s \to 1^+}(s-1)\, \zeta^{+} (s)= \frac{1}{h_{\mu_f}}\int_{\Omega}a\, d\mu_f,
\]
in the same way as in  (\ref{kw}).

\qed

\end{example}

\smallskip

\begin{example} \label{bobo}  We will now construct a potential $f$ of the form (\ref{rer}) where the $a_n$, $n \in \mathbb{N}$, of last example  are such that
the expression (\ref{trf})  for $\gamma' (1)$ above is not finite.
We are interested in a sequence $a_n$, $n \in \mathbb{N}$, such that,
\begin{enumerate}
\item[(a)] $\sum_{n=1}^\infty a_n$ is finite,
\item[(b)] 
$\,\sum_{n=1}^\infty a_n \log a_n$ does not converges and
\item[(c)]  there exists the limit: $\lim_{k \to \infty} C_k=C$.
\end{enumerate}
  Take 
\begin{equation} \label{sda} a_n= \, \frac{1}{ n \, (\log_2 (n))^p},\end{equation} 
$n \geq 2$, for some positive $p$. Remember that we take $a_1=1$. In this case $C=1$.
Now, considering  the case $p=2$, we get $\sum_{n=1}^\infty a_n \log a_n = \infty$ (that is item (b) is true) and the potential associated to such a sequence $a_n$ (and, $C_n$) is not contained in Bowen's class (see Theorem 1.1 in Walters).  

In this case, one obtains an explicit conformal measure $\mu$, where $\mu (C_n)\sim a_n$, but there is no equilibrium probability with support on the all space $\Omega$. Moreover, one can show that $\mu([1])>0.$ Note that in this case  $\gamma'(1)=\infty$, and therefore the limit
\[
\lim_{s \to 1^+}(s-1)\, \zeta^{+} (s)
= \frac{ \sum_{j=1} ^\infty  a_j -1  }{-\,\gamma'(1) }=0.
\]
For this choice of $z_0 \in [010]$ and $[w]=[1]$, we get 
$$ \lim_{s \to 1^+}(s-1)\, \zeta^{+} (s) = 0.$$
This  provides an example where the results of the previous sections do not apply.
\end{example}
 
In Appendix \ref{sec112}, we will consider general potentials satisfying expression (\ref{eq1}) and more general functions $a$. Moreover, in  Theorem  \ref{luc} of Appendix \ref{sec3},  we consider the case of a potential $f$ (as in (\ref{eq1})) which is in the Walters class but is not H\"older continuous, such that, $ \lim_{s \to 1^+}(s-1)\, \zeta^{+} (s) $ is finite. In this result the observable $a$ can be the indicator function of any cylinder set $[w]$ in $\Omega$. 

\section{Dixmier Trace Representations for Equilibrium Measures of Topological Markov Shifts}
\label{sec-top-markov-chains}
The key ingredient {of the proof of} Theorem \ref{theo:dixmier-for-Ruelle-expansive-and-expanding}
is the uniform decay of the Ruelle operator. In this section we illustrate how to obtain a similar identity in {a} non-compact setting, using a spectral gap
 (see Definition \ref{def-SGP}).
The basic setup and presentation follow closely the reference \cite{MR2551790}.
Now {assume that} $\mathcal{A}=\mathbb{N}$ and
$T=(t_{ij})_{\mathcal{A}\times \mathcal{A}}$
is a matrix of zeroes and ones.
Let $\sigma:\Omega\to \Omega$ denote the left shift map, where
\[
\Omega\equiv \{(x_1,x_2,\ldots)\in \mathcal{A}^{\mathbb{N}} : t_{x_ix_{i+1}}=1 \}.
\]
We think of $\Omega$ as the collection of one sided infinite admissible words. We
equip it with its usual distance $d(x,y)=2^{-N(x,y)}$, where $N(x,y)\equiv \inf\{k: x_k\neq y_k\}$
(with the convention that $\inf\emptyset = +\infty$). The resulting topology is generated by the
cylinder sets
$
[y_1,y_2,\ldots,y_{n}]\equiv \{x\in X: x_i=y_i,\ i=1,\ldots,n  \},
$
where $n\geq 1$.
A word $\underline{y}\in\mathcal{A}^{n}$ is called admissible if {the associated} cylinder is non-empty. The length of an admissible word
$\underline{y}=(y_1,\ldots,y_{n})$
will be denoted in this section by $|\underline{y}|\equiv n$. We also assume that
$\sigma:\Omega\to \Omega$ is topologically mixing and locally compact.
{This means that for} any two symbols $p,q\in \mathcal{A}$,
there is an $N(p,q)\in\mathbb{N}$
such that for all $n\geq N(p,q)$ there is an admissible word of length $n$ which starts
at $p$ and ends at $q$, and for all $p\in\mathcal{A}$ we have
$\#\{q\in\mathcal{A}: t_{pq}=1\}<\infty$.%, see \cite{sarig2009lecture}.

 We define the $n$-th variation
of a function $f:\Omega\to\mathbb{R}$ as
$\mathrm{var}_{n}(f)\equiv \sup\{|f(x)-f(y)|\ :\ x_1^{n}=y_1^{n}\}$,
where $z_{m}^{n}\equiv (z_m,\ldots,z_n)$.
A function $f:\Omega\to\mathbb{R}$ is
called a weakly $\theta$-H\"older continuous function
for $0<\theta<1$ if there exists a positive number $\mathrm{Hol}_{\theta}(f)$
such that $\mathrm{var}_{n}(f) \leq \mathrm{Hol}_{\theta}(f)\ \theta^n$.
The Birkhoff sum of a function $f$ is denoted by
$S_n(f)(x)\equiv \sum_{k=0}^{n-1}f\circ\sigma^{k}$.

Suppose $f$ is a weakly $\theta$-H\"older continuous function (or has summable variations)
and $X$ is topologically mixing. The
\textit{Gurevich} pressure of $f$ is the limit
\[
P_{G}(f) = \lim_{n\to\infty} \frac{1}{n}\log Z_{n}(f,q),
\quad  %\mathrm{where} \
Z_n(f,q) = \sum_{\sigma^n(x)=x}\exp(S_n(f)(x))1_{[q]}(x),
\]
for some $q\in \mathcal{A}$. This limit is independent of $q$, and if
$\sup f<\infty$ then it is equal to
$\sup \{h_{\mu}+\int f\, d\mu \}$, where the supremum ranges
over all invariant probability measures such that the supremum is not of
the form $\infty-\infty$ (see \cite{MR1738951}).

In this setting the Ruelle operator associated with $f$ is defined
in {similar way by}
\[
\mathcal{L}_{f}(\varphi)(x)= \sum_{q\in\mathcal{A}} e^{f(qx)}\varphi(qx).
\]
This is well-defined for functions $f$ such that the sum converges for
all $x\in X$. Let $\mathrm{dom}(\mathcal{L}_{f})$ denote the collection
of such functions.

\begin{definition}[Spectral Gap Property - SGP]
\label{def-SGP}
Suppose that $f$ is $\theta$-weakly H\"older continuous,
and that $P_{G}(f)<\infty$. We say that $f$ has the spectral gap property (SGP)
if there is a Banach space of continuous functions $\mathcal{B}$ such that
\begin{enumerate}
	\item $\mathcal{B}\subset \mathrm{dom}(\mathcal{L}_{f})$ and
	$\mathcal{B}\supset \{1_{[\underline{a}]}: \underline{a}\in \mathcal{A}^n, n\in\mathbb{N} \}$,

	\item  $f\in \mathcal{B}$ implies $|f|\in\mathcal{B}$,
	$\||f|\|_{\mathcal{B}}\leq \|f\|_{\mathcal{B}} $,

	\item  $\mathcal{B}$-convergence implies uniform convergence on cylinders,

	\item  $\mathcal{L}_{f}(\mathcal{B})\subset \mathcal{B}$ and
	$\mathcal{L}_{f}:\mathcal{B}\to \mathcal{B}$ is bounded,

	\item  $\mathcal{L}_{f}=\lambda P+N$, where $\lambda=\exp(P_{G}(f))$,
	and $PN=NP=0$, $P^2=P$, $\dim(\mathrm{Im}\,P)=1$, and the spectral radius of $N$
	is less than $\lambda$,

	\item  if $g$ is $\theta$-H\"older, then
	$\mathcal{L}_{f+zg}:\mathcal{B}\to\mathcal{B}$ is bounded and
	$z\longmapsto \mathcal{L}_{f+zg}$ is analytic on some complex neighborhood of zero.

\end{enumerate}

\end{definition}

The motivation to introduce this concept is the next theorem.
{This result has been proven} in several contexts by many authors
(see \cite{MR2551790} and references therein).

\begin{theorem}
Suppose $X$ is a topologically mixing countable Markov shift,
and $f:X\to\mathbb{R}$ is a $\theta$-weakly H\"older continuous potential
with finite Gurevich pressure, finite supremum, and the SGP.
Write $\mathcal{L}_{f}=\lambda P+N$. Then
\begin{enumerate}
		\item $P$ takes the form $Pf = h \int f\, d\nu$, where $h\in\mathcal{B}$
		is a positive function,
		and $\nu$ is a measure which is finite and positive on all cylinder sets,

		\item the measure $d\mu= hd\nu$ is a $\sigma$-invariant probability measure {such that,} if $\mu$ has finite entropy, then $\mu$ is the unique equilibrium measure
		of $f$,

		\item there is a constant $0<\kappa<1$ such that for all $g\in L^{\infty}(\mu)$ and
		$f$ bounded H\"older continuous, there exists a positive constant $C(\varphi,\psi)$ such that
		$|\mathrm{cov}_{\mu}(\varphi,\psi\circ \sigma^n)|
		\leq C(\varphi,\psi)\kappa^n$
		($\mathrm{cov}$ = covariance).

		\item Suppose $g$ is a bounded H\"older continuous function, such that
		$\mathbb{E}_{\mu }[g]=0$. If $g\neq f-f\circ\sigma$ with
		$f$ continuous, then there is $\varrho>0$
		such that $S_n(g)/\sqrt{\varrho n}$ converges
		in distribution (w.r.t. $\mu$) to a standard normal distribution.

		\item Suppose $g$ is a bounded H\"older continuous function, then the function
		$t\longmapsto P_{G}(f+tg)$ is real analytic on a neighborhood of zero.
\end{enumerate}

\end{theorem}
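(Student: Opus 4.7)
The plan is to derive every item from the Perron--Frobenius decomposition $\mathcal{L}_{f}=\lambda P + N$ supplied by SGP(e), interpreting $P$ as a rank-one projection onto an eigenline and $N$ as a strictly subdominant remainder. For part (1), I pick any nonzero $h\in\mathrm{Im}\,P$; invariance of the positive cone under $\mathcal{L}_{f}$ together with SGP(d) and the uniform-on-cylinders convergence of SGP(c) forces $h$ to be of constant sign, so I take $h>0$. Dualizing, the rank-one projection $P^{*}$ identifies a positive eigenmeasure $\nu$ of $\mathcal{L}_{f}^{*}$; positivity of $\nu$ on cylinders follows from SGP(a) and (b), since $\mathbf{1}_{[\underline{a}]}\in\mathcal{B}$ and $\int\mathbf{1}_{[\underline{a}]}\, d\nu$ must be strictly positive for $P$ to act nontrivially on it. Normalizing $\int h\, d\nu=1$ yields $Pf=h\int f\, d\nu$.

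For part (2), $\sigma$-invariance of $d\mu=h\, d\nu$ is the identity
\[
\int\varphi\circ\sigma\, d\mu
=\lambda^{-1}\int\mathcal{L}_{f}(h\,\varphi\circ\sigma)\, d\nu
=\int\varphi\, d\mu,
\]
using $\mathcal{L}_{f}(h\,\varphi\circ\sigma)=\varphi\,\mathcal{L}_{f}(h)=\lambda\varphi h$. Item (a) reduces to uniqueness of the conformal measure for the normalized potential $f-\log h\circ\sigma+\log h-\log\lambda$, combined with the variational principle of \cite{MR1738951}. For (b), iterating the SGP decomposition gives $\mathcal{L}_{f}^{n}=\lambda^{n}P+N^{n}$ with $\|N^{n}\|_{\mathcal{B}}\leq C(\lambda\kappa)^{n}$ for some $\kappa<1$ coming from the strict inequality of spectral radii in SGP(e), and then
\[
\mathrm{cov}_{\mu}(\varphi,\psi\circ\sigma^{n})
=
\lambda^{-n}\int\psi\cdot N^{n}(\varphi h)\, d\nu
\]
after the dominant $\lambda^{n}P$ piece cancels against the product of expectations; this is bounded by $C(\varphi,\psi)\kappa^{n}$.

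Parts (c) and (d) rest on SGP(f), and they are where I expect the main obstacle to lie. SGP(f) provides an analytic family $z\mapsto\mathcal{L}_{f+zg}$ of bounded operators on $\mathcal{B}$; since $\lambda$ is a simple eigenvalue of $\mathcal{L}_{f}$, Kato's analytic perturbation theorem produces an analytic branch $z\mapsto\lambda(z)$ of leading eigenvalues near $z=0$. For real $t$ near zero, the spectral characterization of $P_{G}$ forces $\lambda(t)=\exp(P_{G}(f+tg))$, which gives (d). For the CLT in (c), I specialize to $z=it$ and apply the Nagaev--Guivarc'h method: the characteristic function of $S_{n}(g)$ under $\mu$ equals $\lambda^{-n}\int\mathcal{L}_{f+itg}^{n}h\, d\nu$ up to a normalization, and the analytic decomposition rewrites this as $\lambda(it)^{n}$ times a smooth factor plus an $O(\kappa^{n})$ remainder; expanding $\log\lambda(it)$ to second order then yields the Gaussian limit. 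The subtle point, and the technical bottleneck, is the nondegeneracy condition $\varrho=-\partial_{t}^{2}\log\lambda(0)>0$: showing that $\varrho$ can only vanish when $g$ is a continuous coboundary requires solving the cohomological equation within $\mathcal{B}$, using that the spectral gap rules out nontrivial solutions of $N u = 0$, and this is where I would spend the most care.
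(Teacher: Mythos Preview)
The paper does not prove this theorem at all: it is stated as a background result, with the sentence ``This result were proved in several contexts by many authors, see \cite{MR2551790} and references therein'' immediately preceding it. There is no proof to compare against.

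Your sketch is, in fact, a reasonable outline of how such results are established in the literature (in particular in Cyr--Sarig \cite{MR2551790}): positivity of $h$ from the asymptotics $\lambda^{-n}\mathcal{L}_f^n\to P$ applied to indicators, conformality of $\nu$ by duality, exponential decay of correlations from $\mathcal{L}_f^n=\lambda^nP+N^n$, and the Nagaev--Guivarc'h / Kato perturbation argument for analyticity of pressure and the CLT. One caution: in the countable-state, non-compact setting, the step ``$h$ is of constant sign because $\mathcal{L}_f$ preserves the positive cone'' needs the convergence in SGP(c) (uniform on cylinders) rather than a compactness argument, and the identification $\lambda(t)=\exp(P_G(f+tg))$ requires knowing that the perturbed operator still has SGP-type spectral structure for small real $t$, which is not automatic from SGP(f) alone and is where the cited references do some work. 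But since the paper simply imports the theorem, there is nothing further to compare.
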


Now the $C^*$-algebra $A$ is taken to be as $C_{0}(X,\mathbb{C})$,
the set $X$ of complex valued continuous functions $f:X\to\mathbb{C}$
that vanish at infinity.
The Hilbert space $H$ is  $\ell^2(W^*)\oplus \ell^2(W^*)$, where
$W^*$ is the set of all finite length admissible words
 $w=(w_1,w_2,\ldots,w_n)$, where $n\in \mathbb{N} $ and
$w_j\in \mathcal{A}$. The space $\ell^2(W^*)$ is defined as before, the complex vector space
of all functions $\epsilon:W^{*}\to \mathbb{C}$, satisfying $\sum_{w \in W^*} |\epsilon(w)|^2 <\infty$.
Fix two arbitrary elements $x,y\in \Omega$.
For each $a\in A$ the operator $L_a: H \to H$ is defined by
\[
L_a (
\bigoplus_{ w \in W^*(x,y) }   \binom{\epsilon_1(w) }{\epsilon_2(w) }
)
=
\bigoplus_{w \in W^*(x,y)}
\binom{a( wx)\, \epsilon_1(w) }{a( wy)\,\epsilon_2(w) },
\]
where $W^*(x,y)$ is the set of all admissible words $w\in W^{*}$
such that $t_{w_nx_1}=\ t_{w_ny_1}=1$.
Finally, the Dirac operator is given by
\[
D
=
( \bigoplus_{w \in W^*(x,y)}  \binom{\epsilon_1(w) }{\epsilon_2(w) } )
=
\bigoplus_{w \in W^*(x,y)} \frac{1}{\mathbb{J}(wx)} \,\binom{\epsilon_1(w)}{\epsilon_2(w)},
\]
where $\mathbb{J}(wx)=[J(w_{l(w)}x)\ldots J(w_{l(w)}\cdots w_{1}x)]$,
$l(w)$ is the \textit{length} of a string $w\in W^*(x,y)$ and
$
\log J = f + \log h -\log\circ h\circ\sigma -\log \lambda.
$
We also assume there are constants $0<\kappa_1<\kappa_2$ such that
$e^{-\kappa_1 q}\leq J(qx)\leq  e^{-\kappa_2 q}$ for all $x\in \Omega$.

The verification that $(A,H,D)$ is a spectral triple is
more involved. The difficult {part} is to verify item (3) of Definition \ref{def-spec-triples}.
When working with spectral triples in this context (infinite alphabets)
we need to restrict ourselves to a class of functions $J$
for which the operator $D^{-1}$ is a compact operator.
A simple example of  such a  function is $J(x)=e^{-x_1+1}(1-e)^{-1}$.
To prove the existence of a dense subset of $C_{0}(\Omega,\mathbb{C})$
satisfying $\{a\in A: \|[D,L_a]\|<+\infty \}$ it is enough to observe that
\[
\left\{
a\in C_{0}(\Omega,\mathbb{C}):
\sum_{w\in W^*}\frac{|a(wx)-a(wy)|}{|\mathbb{J}(wx)|}<+\infty
\right\}
\]
is a self-adjoint subalgebra of $C_{0}(\Omega,\mathbb{C})$,
separating points in $\Omega$ and for any $x\in\Omega$ there is an element $a$
in this subalgebra such that $a(x)\neq 0$, and therefore we can apply the Stone-Weiertrass
theorem for locally compact spaces. Indeed, the family of functions
$(S_n)_{n\in\mathbb{N}}$, given by
\[
S_n(x) \equiv \arctan(\frac{1}{x_n})\exp(-2\kappa_1 \sum_{j=1}^{n-1}x_j)
\]
is in this subalgebra, {is non-vanishing and is separating points.}
Since we are assuming that $P_{G}(f)<\infty$, it
follows immediately that $h_{\mu }(\sigma)$ is finite.
By using the main result of \cite{MR2551790} we can find a potential
$J$ having SGP and satisfying the above conditions. Therefore
{it} follows from the above theorems
and a similar computation as presented before that
\[
\textrm{Tr}_{\omega}(L_{a}D^{-1})
=
\frac{2}{h_{\mu}}\int_{\Omega}a\, d\mu,
\qquad \forall a\in C_{0}(X,\mathbb{C}).
\]

\appendix
\section{The Ruelle Operator applied to indicators of cylinders for a family of potentials}
\label{sec112}
First we consider the case of a general $f$ and a general cylinder set $[w]$.
Let $z_0 \in [010]$ and assume that $\psi$ is the indicator function $I_{[w]}$ of the cylinder $[w]$ for some finite word $w$ in the alphabet $\left\lbrace 0,1 \right\rbrace$. Then
\begin{align*}
\mathcal{L}_s^n \psi (z_0) &= \sum_{y \in \sigma^{-n}(z_0)} \left[\prod_{j=0}^{n-1} f^s \left(\sigma^j(y) \right) \right] \. I_{[w]}(y) 
= \sum_{a \in \left\lbrace 0,1 \right\rbrace^{n - |w|}} \left[ \prod_{j=0}^{n-1} f^s \left( \sigma^j(w a z_0) \right) \right]
\end{align*}
for all $n \geq |w|$.
Holding $w$ and $z_0$ fixed, we write $\mathcal{L}_s^n I_{[w]} (z_0)$ simply as $L_s^n$.

Now, for a potential $f$ as in expression (\ref{eq1}) we formally derive some expressions, whose validity will depend on the convergence of some sequences. Later we investigate in some examples the convergence issue.
\begin{theorem}
For $n > |w|$,
\begin{align*}
L_s^n = \sum_{j=1}^{n - |w|} \left( \prod_{i=0}^{j-1} C_i^s \right) \. L_s^{n-j} + \prod_{k=0}^{n-1} f^s \left( \sigma^k(w1^{n-|w|}z_0) \right) \ .
\end{align*}
%\label{theo1}
\end{theorem}
\begin{proof}
\begin{align*}
& \sum_{j=1}^{n-|w|} \left( \prod_{i=0}^{j-1} C_i^s \right) L_s^{n-j} \\
&= \sum_{j=1}^{n-|w|} \left(\prod_{i=0}^{j-1}C_i^s \right) \. \sum_{a \in \left\lbrace 0,1 \right\rbrace^{n-j-|w|}} \prod_{k=0}^{n-j-1} f^s \left(\sigma^k (waz_0) \right) \\
&= \sum_{j=1}^{n-|w|} \left[ \prod_{i=0}^{j-1} f^s \left( \sigma^i(01^{j-1} z_0) \right) \right] \sum_{a \in \left\lbrace 0,1 \right\rbrace^{n-j-|w|}} \prod_{k=0}^{n-j-1} f^s \left( \sigma^k(wa01^{j-1}z_0) \right) \\
&= \sum_{j=1}^{n-|w|} \left[ \prod_{i=0}^{j-1} f^s \left( \sigma^{n-j+i}(wa01^{j-1} z_0) \right) \right] \sum_{a \in \left\lbrace 0,1 \right\rbrace^{n-j-|w|}} \prod_{k=0}^{n-j-1} f^s \left( \sigma^k(wa01^{j-1}z_0) \right) \\
&= \sum_{j=1}^{n-|w|} \sum_{a \in \left\lbrace 0,1 \right\rbrace^{n-j-|w|}} \prod_{k=0}^{n-1} f^s \left(\sigma^k(wa01^{j-1}z_0) \right)
\end{align*}
The set
\begin{align*}
\left\lbrace wa01^{j-1}z_0;\  a \in \left\lbrace 0,1 \right\rbrace^{n-j-|w|} \right\rbrace
\end{align*}
is the set of all $n$-th pre-images of $z_0$ which has $z_0$ preceded by exactly $(j-1)$ digits 1 and belongs to $[w]$. Thus,
\begin{align*}
\bigcup_{j=1^{n-|w|}} \left\lbrace wa01^{j-1}z_0;\  a \in \left\lbrace 0,1 \right\rbrace^{n-j-|w|} \right\rbrace
\end{align*}
has all the $n$-th pre images of $z_0$ in the set $[w]$, except for $w1^{n-|w|}z_0$. We have, therefore,
\begin{align*}
\sum_{j=1}^{n - |w|} \left( \prod_{i=0}^{j-1} C_i^s \right) \. L_s^{n-j} + \prod_{k=0}^{n-1} f^s \left( \sigma^k(w1^{n-|w|}z_0) \right)   & =  \\
\sum_{a \in \left\lbrace 0,1 \right\rbrace^{n-|w|}} \prod_{k=0}^{n-1} f^s \left( \sigma^k(waz_0) \right)  & = L_s^n \ .
\end{align*}
\end{proof}
This theorem has the following consequences, whose proofs are omitted.
\begin{corollary}
For $N \in \N$,
\begin{align*}
\sum_{n=0}^N L_s^{|w|+n} = L_s^{|w|} + \sum_{n=0}^{N-1} \left( \sum_{j=0}^{N-1-n} \prod_{i=0}^j C_i^s \right) L_s^{|w|+n} + \sum_{n=1}^N \prod_{k=0}^{n+|w|-1} f^s\left( \sigma^k(w1^{n}z_0) \right) \ .
\end{align*}
\label{cor1}
\end{corollary}

\begin{corollary}
\begin{align*}
\sum_{n=0}^{\infty} L_s^{|w|+n} = \frac{L_s^{|w|} + \sum_{n=1}^{\infty} \prod_{k=0}^{n+|w|-1} f^s \left( \sigma^k(w1^n z_0) \right)}{1- \sum_{j=0}^{\infty} \prod_{i=0}^j C_i^s}
\end{align*}
\label{cor2}
\end{corollary}
From Corollary \ref{cor2}, we conclude that
\begin{align}
\zeta_+(s) = \frac{L_s^{|w|} + \sum_{n=1}^{\infty} \prod_{k=0}^{n+|w|-1} f^s \left( \sigma^k(w1^n z_0) \right)}{1- \sum_{j=0}^{\infty} \prod_{i=0}^j C_i^s} + \sum_{n=1}^{|w|-1} L_s^n \ .
\label{eq2}
\end{align}
The second term in the right hand side of Equation \eqref{eq2} is a finite sum (does not have convergence problems), but we shall see in the next section that $\lim_{s \to 1^+}\zeta_+(s) = +\infty$ under the hypothesis of a normalized eigenvalue. We shall investigate the existence of the limit
\begin{align*}
\lim_{s \to 1^+} (s-1) \zeta_+(s) = \lim_{s \to 1^+} (s-1) \. \frac{L_s^{|w|} + \sum_{n=1}^{\infty} \prod_{k=0}^{n+|w|-1} f^s \left( \sigma^k(w1^n z_0) \right)}{1- \sum_{j=0}^{\infty} \prod_{i=0}^j C_i^s} \ .
\end{align*}
Here, it is worth noting that the  denominator does not depends of the cylinder $[w]$.

\subsection{Normalization of the Eigenvalue}
\label{sec2}
We are now interested in examples where the main eigenvalue of the Ruelle operator is equal to $1$ but $f$ not necessarily is normalized.
%As above, we write $\mathbf{1}$ for the constant function $x \mapsto 1$ in $\Omega$.
We recall that, according to Corollary 3.5 in \cite{MR2342978}, $\lambda$ is the maximal eigenvalue if and only if,
\begin{align}
\frac{1}{\lambda^2} \left[ e^{d_1} + \sum_{j=1}^\infty e^{d_{1+j}} \frac{e^{a_2 + \ldots + a_{1+j}}}{\lambda^j} \right] \left[ e^{b_1} + \sum_{j=1}^\infty e^{b_{1+j}} \frac{e^{c_2 + \ldots + c_{1+j}}}{\lambda^j} \right] = 1 \ .
\label{eq3}
\end{align}
For the potential defined in \eqref{eq1}, we have $a_n \equiv \log C_0 \equiv b_n$, $d_n \equiv  \log C_1$ and $c_{n+1} = \log C_{n+1}$. Hence, \eqref{eq3} translates to the following. 

\begin{proposition} \label{leel} If 
\begin{align}
C_0 = \left[ 1 + \sum_{j=1}^{\infty} \left( \prod_{k=1}^j C_k \right) \right]^{-1} ,
\label{eq4}
\end{align}
then, the main eigenvalue of the Ruelle operator for the potential $g$ is equal to $1$.

\end{proposition}

\begin{proof}
Assuming  that $\lambda = 1$ in  \eqref{eq3}, we get
\begin{align*}
& \left[ C_1 + \sum_{j=1}^\infty C_1 C_0^j \right] \left[ C_0 + \sum_{j=1}^\infty C_0 C_2 \ldots C_{j+1} \right]  \\
=& C_0 C_1 \left[ 1 + \sum_{j=1}^\infty C_0^j \right] \left[ 1 + \sum_{j=1}^\infty C_2 \ldots C_{j+1} \right] 
= \left( \sum_{j=1}^\infty C_0^j \right) \left[ \sum_{j=1}^{\infty} \left( \prod_{k=1}^j C_k \right)\right] \\
=& \frac{C_0}{1-C_0} \. \left[ \sum_{j=1}^{\infty} \left( \prod_{k=1}^j C_k \right)\right] = 1 .
\end{align*}
In this case $C_0$ satisfies (\ref{eq4})
\end{proof}

Therefore, we now investigate potentials given by $(C_n)_{n \in \N}$, for which $\sum_{j=1}^{\infty} \prod_{k=1}^j C_k$ converges and for which the restriction to the cylinder $[0]$ is equal to $C_0$ as in \eqref{eq4}.

Define $\gamma: [1,+\infty) \to \R$ to be the second term in the denominator of $\zeta_+(s)$ in \eqref{eq2}, that is
\begin{equation} \label{treta}
\gamma(s) := \sum_{j=0}^{\infty} \prod_{i=0}^j C_i^s = C_0^s \left[ 1 + \sum_{j=1}^{\infty} \prod_{i=1}^j C_i^s \right].
\end{equation}
If we suppose that the eigenvalue is normalized, then $\gamma(1) = 1$, by \eqref{eq4}, and, by \eqref{eq2}, we see that $\lim_{s \to 1^+} \zeta_+(s) = + \infty$, which makes the question of the existence of the limit $\lim_{s \to 1^+}(s-1) \zeta_+(s)$ a non trivial one. Let $z(s)$ be the numerator of the first term in $\zeta_+(s)$, for $s \geq 1$. Therefore,
\begin{equation}\label{lor}
\lim_{s \to 1^+} (s-1) \zeta_+(s) = \lim_{s \to 1^+} \frac{z(s)}{(s-1)^{-1} \left( \gamma(1) - \gamma(s) \right)} \ .
\end{equation}
As $\gamma(1)=1$, 
we conclude that, under the above hypothesis, if $\gamma$ has right derivative at $s=1$ and $z(1)$ is finite, then the above limit is finite.

\begin{remark}
Note that the term on the numerator depends on the cylinder $[w]$ and the term on the denominator does not.
\end{remark}

\medskip

\subsection{An example with Walters regularity}
\label{sec3}
We consider the potential $g = \log f: \Omega =\{0,1\}^\mathbb{N} \to \mathbb{R}$ of the following kind. Let
\begin{align*}
   t = (e^{\frac{1}{3}} + e^{-\frac{1}{2}})^{-1} \ ,
\end{align*}
   and, for $n \in \mathbb{N}$, define $C_n = t \, \exp \left[ \frac{(-1)^{n+1}}{n} \right]$, which satisfies $\lim_n C_n = t$. $f$ is then defined by \eqref{eq1}. We would like to point out that P. Walters mentioned in a private message a version of the above example (a potential which is in the Walters class but it is not of H\"older continuous).
   
   We will now show (see Theorem \ref{luc}) that in this case, the limit
   \begin{equation} \label{kle} \lim_{s \to 1^+} (s-1) \zeta_+(s) = - \frac{z(1)}{\gamma'(1)}.
   \end{equation}
   exists. As stated in \cite{MR2342978},  a potential in the Walters family is in Walters class  of regularity if and only if, for the associated sequences $(a_n)_{n \in \N}$ and $(c_n)_{n \in \N}$ given by Definition \ref{defi1} with respective limits $a$ and $c$, the following holds. 
   \begin{align*}
   \sum_{n \in \N} (a_n - a) \ \text{and} \ \sum_{n \in \N} (c_n - c) \ \text{converge} \ .
   \end{align*}
   For the potential defined in the first paragraph of this section, $a_n \equiv \log C_0$ while $c_n = \log C_n = \log t + \frac{(-1)^{n+1}}{n}$, so
   \begin{align*}
   \sum_{n \in \N} (a_n - a) = 0 \ ,
   \end{align*}
   and
   \begin{align*}
   \sum_{n \in \N} (c_n - c) = \sum_{n \in \N} \left[ \log t + \frac{(-1)^{n+1}}{n} - \log t \right] = \sum_{n \in \N} \frac{(-1)^{n+1}}{n} \ ,
   \end{align*}
   which is convergent.  Therefore, $g = \log f$ is of Walters class of regularity. 
   Note that a potential $g = \log f$, for $f$ as in \eqref{eq1}, cannot be normalized, unless the sequence $C_n$ is constant (which is equivalent to a constant potential), as
\begin{align}
\mathcal{L}_{g} \mathbf{1} (x) = e^{g(0x)} + e^{g(1x)} = C_0 + f(1x) = 1
\label{eq6}
\end{align}
for every $x \in \Omega$. 

We will show that this potential is not  H\"older continuous. In order to do so, define, for $\phi \in C(\Omega, \mathbb{R})$,
\begin{align*}
v_n(\phi) = \sup \left\lbrace |\phi(x) - \phi(y)|;\ x,y \in \Omega,\ x_i = y_i, i  = 0, \ldots, n-1  \right\rbrace \ .
\end{align*}
A function $\phi$ is H\"older with H\"older exponent $\alpha \in (0,1)$ if
\begin{align*}
\sup_{n \in \mathbb{N}} \frac{v_n(\phi)}{2^{-n \alpha}} < \infty \ .
\end{align*}
Given $n \geq 2$, let $x = 1^n0x$, for any fixed $x \in \Omega$, and $y = 1^\infty$. Then $d(x,y) = 2^{-(n+1)}$, and
$|\log f(x) - \log f(y)| = {1}/{n}$. This implies that $ v_n(\log f) \geq \frac{1}{n}$ and, in particular, 
This implies that $\log f$ is not H\"older continuous.

We now determine the derivative of $\gamma$ defined in \eqref{treta}. In order to so, let 
\begin{align*}
\gamma_n(s) = C_0^s \left[ \sum_{j=1}^n \left( \prod_{k=1}^j C_k^s \right) + 1 \right]
\end{align*}
Then
\begin{align*}
  \gamma_n'(s)   = & \log C_0 \. C_0^s \left[ \sum_{j=1}^n \left( \prod_{k=1}^j C_k \right)^s + 1 \right] + C_0^s 
\left[ \sum_{j=1}^n \left( \prod_{k=1}^j C_k \right)^s \. \log \left( \prod_{k=1}^j C_k^s \right) \right] \\
 = &
 C_0^s \left\lbrace \log C_0 \left[ 1 + \sum_{j=1}^n t^{js} e^{s \. \sum_{k=1}^j \frac{(-1)^{k+1}}{k}} \right] \right. 
 \\ & + \left. \sum_{j=1}^n \left[ t^{js} e^{s \. \sum_{k=1}^j \frac{(-1)^{k+1}}{k}} \. j \. \log t \.  \sum_{k=1}^j \frac{(-1)^{k+1}}{k} \right] \right\rbrace
\end{align*}
We will now address the question of differentiability of  the function $\gamma$.
\begin{theorem} \label{gaga} The derivative $\gamma ' (1)$ exists and is finite. \end{theorem}
\begin{proof}
Remember that $t<1$. Let
\begin{align*}
\rho = \sup_{n \in \N} \sum_{i=1}^n \frac{(-1)^{i+1}}{i} \ .
\end{align*}
Then, for $\epsilon > 0$ and $s \in [1,1+\epsilon]$,
\begin{align*}
 & C_0^s \log C_0 \left[ \sum_{j=1}^n t^{js} e^{s \. \sum_{k=1}^j \frac{(-1)^{k+1}}{k}} \right] 
\\  \leq &
  \log C_0 \. \max \{ C_0,C_0^{1+\epsilon}\} \. \max \{e^\rho,e^{\rho (1+\epsilon)} \} \. \sum_{j=1}^n \left(t^s \right)^j
\\  \leq &
\log C_0 \. \max \{ C_0,C_0^{1+\epsilon}\} \. \max \{e^\rho,e^{\rho (1+\epsilon)} \} \. \sum_{j=1}^n  t^j 
\end{align*}
and
\begin{align*}
& C_0^s \sum_{j=1}^n \left[ t^{js} e^{s \. \sum_{k=1}^j \frac{(-1)^{k+1}}{k}} \. j \. \log t \.  \sum_{k=1}^j \frac{(-1)^{k+1}}{k} \right] 
\\  \leq & 
\max \{C_0,C_0^{1+\epsilon}\} \. \lambda \. \max \{e^{\rho},e^{\rho (1+\epsilon)}\} \. \sum_{j=1}^n j \. t^j \ .
\end{align*}
By the Weierstrass $M$ criterion, the series of functions $\gamma_n': [1,1+\epsilon] \to \R$ converges uniformly to the function
\begin{align*}
 s \mapsto &   C_0^s \left\lbrace \log C_0 \left[ 1 + \sum_{j=1}^\infty t^{js} e^{s \. \sum_{k=1}^j \frac{(-1)^{k+1}}{k}} \right] \right. \\&+ \left. \sum_{j=1}^\infty \left[ t^{js} e^{s \. \sum_{k=1}^j \frac{(-1)^{k+1}}{k}} \. j \. \log t \.  \sum_{k=1}^j \frac{(-1)^{k+1}}{k} \right] \right\rbrace \ .
\end{align*}
Hence, the function  $\gamma':[1,1+\epsilon] \to \R$ is well defined. 
\end{proof}
It follows from the above  that expression (\ref{kle}) is true if $z(1)$ is finite.
Now we will investigate if $z(1)$ is finite for the case of a general cylinder $[w]$.
Considering the indicator function $I_{[w]}$ of the cylinder $[w]$, we obtain from the results in Section \ref{sec2}, that
\begin{align*}
z(s) = L_s^{|w|} + \sum_{n=1}^{\infty} \prod_{k=0}^{n+|w|-1} f^s \left( \sigma^k(w1^n z) \right),
\end{align*}
where it remains to analyze the convergence as $s \to 1$. 

\begin{theorem} \label{luc}
$z(1)$ is finite  in the case of the existence of the limit 
\begin{equation} \label{sili} \lim_{s \to 1}\sum_{k=0}^{\infty} \prod_{i=0}^k C_i^s.
\end{equation}
Therefore, in this case, 
$\lim_{s \to 1^+} (s-1) \zeta_+(s) = - \frac{z(1)}{\gamma'(1)}$ exists. Note that condition (\ref{sili}) does not depend on the cylinder set $[w]$.
\end{theorem}

\begin{proof}
When $w = u0$ (it is possible for $u$ to be the empty word),
\begin{align*}
\prod_{k=0}^{n+|w|-1} f^s \left( \sigma^k(w1^n z) \right) &= \prod_{k=0}^{|w|-1}f^s \left(\sigma^k(u0\ldots) \right) \prod_{k=0}^{n-1} f^s\left( \sigma^k(1^n z) \right) \\
&= \prod_{k=0}^{|w|-1}f^s \left(\sigma^k(u0\ldots) \right) \prod_{k=0}^{n-1} C_{n-k}^s \ ,
\end{align*}
where $u0\ldots$  can be any point in $\Omega$ in the cylinder $[u0]$. Thus,
\begin{align*}
z(s) = L_s^{|w|} +  \prod_{k=0}^{|w|-1}f^s \left( \sigma^k (u0\ldots) \right) \sum_{n=1}^{\infty} \prod_{k=0}^{n-1} C_{n-k}^s
\end{align*}
When $w = u1^l$, with $u$ empty or having 0 as the last digit,
\begin{align*}
\prod_{k=0}^{n+|w|-1} f^s \left( \sigma^k(w1^n z) \right) &= \prod_{k=0}^{|w|-l}f^s \left(\sigma^k(u\ldots) \right) \prod_{k=0}^{n+l-1} f^s\left( \sigma^k(1^{n+l} z) \right) \\
&= \prod_{k=0}^{|w|-1}f^s \left(\sigma^k(u\ldots) \right) \prod_{k=0}^{n+l-1} C_{n+l-k}^s \ ,
\end{align*}
where $u\ldots$ can be any point in $\Omega$ in the cylinder $[u]$. Thus,
\begin{align*}
z(s) = L_s^{|w|} +  \prod_{k=0}^{|w|-l-1}f^s \left( \sigma^k (u\ldots) \right) \sum_{n=1}^{\infty} \prod_{k=0}^{n-1} C_{n+l-k}^s
\end{align*}
In both cases, convergence of  $\lim_{s \to 1}z(s)$ is guaranteed by the existence of $\sum_{k=0}^{\infty} \prod_{i=0}^k C_i^s$.
\end{proof}

\subsection{A more general family of examples}
\label{subsec:Lambda}
The most general situation where the calculations of the preceding section are valid is in the case of uniform convergence of the sequence of functions
\begin{align*}
[1,1+\epsilon] \ni s &\mapsto \sum_{j=1}^n \left( \prod_{k=1}^j C_k \right)^s \\
[1,1+\epsilon] \ni s &\mapsto \sum_{j=1}^n \left( \prod_{k=1}^j C_k \right)^s \. \log \left( \prod_{k=1}^j C_k \right)
\end{align*}
Nevertheless, the second condition is implied by the first, for
\begin{align*}
& \sum_{j=1}^\infty \left[ \left( \prod_{k=1}^j C_k \right)^s \. \log \left( \prod_{k=1}^j C_k \right) \right] - \sum_{j=1}^n \left[ \left( \prod_{k=1}^j C_k \right)^s \. \log \left( \prod_{k=1}^j C_k \right) \right] \\
= &  \sum_{j=n+1}^\infty \left( \prod_{k=1}^j C_k \right)^s \. \log \left( \prod_{k=1}^j C_k \right) \ ,
\end{align*}
and the first convergence conditions bounds $\log \prod_{k=1}^j C_k$.

Trying to generalize the calculations of the preceding section, we present a class $\Lambda$ of potentials $f$ for which they can be extended.  Choose a sequence $\left( \alpha_k \right)_{k \in \N}$, with the property that 
$\sum_{k \in \N} \alpha_k$ converges and fix $t \in (0,1)$. Then define $C_k := t e^{\alpha_k}$, for $k \in \N$, and define $C_0$ by \eqref{eq4}, so that the eigenvalue is normalized. The potential $f$ associated to those choices is then given by \eqref{eq1}. We denote by $\Lambda$ the class of such potentials. We observe that $\Lambda$ is a family indexed by the set
\begin{align*}
\left\lbrace \left( (\alpha_k)_{k \in \N},t \right);\ \sum_{k \in \N} \alpha_k \ \text{converges} \ \& \ t \in (0,1) \right\rbrace \ .
\end{align*}
Using the notation of the preceding sections,
\begin{align*}
\gamma_n'(s) = C_0^s \left\lbrace \log C_0 \left[ 1 + \sum_{j=1}^n t^{js} e^{s \. \sum_{k=1}^j \alpha_k} \right] + \sum_{j=1}^n \left[ t^{js} e^{s \. \sum_{k=1}^j \alpha_k} \. j \. \log t \.  \sum_{k=1}^j \alpha_k \right] \right\rbrace  \ ,
\end{align*}
the same arguments used there to prove the uniform convergence of $\left(\gamma_n' \right)$ can be used again to show that $\gamma ' (1)$ is finite (see Theorem \ref{gaga}). 
Indeed, under the hypothesis of convergence of the series $\sum_{k \in \N} \alpha_k$, we can define
\begin{align*}
\rho = \sup_{n \in \N} \sum_{k=1}^n \alpha_k \ ,
\end{align*}
and get the same estimates as in the preceding section. In the same way as before, if condition (\ref{sili}) is true, then the convergence of $\lim_{s \to 1^+} (s-1) \zeta_+(s)$ is guaranteed. Hence, we have shown the following. 

\begin{theorem}
For the potentials $g = \log f$, with $f \in \Lambda$, the limit
\begin{equation}
\lim_{s \to 1^+} (s-1) \zeta_+(s)
\label{ccc}
\end{equation}
exists.
\end{theorem}

\section{Appendix B: A Haar basis for \texorpdfstring{$L^2$}{L2} spaces and explicit calculations of eigenfunctions} \label{Haar}

We now briefly discuss the construction of spectral triples in \cite{MR3084488} which we recall now. Let $(\Omega,T)$ refer to a topologically mixing subshift of finite type (cf. Section \ref{subsec:sft}), $\varphi: \Omega \to \mathbb{R}$ a Hölder continuous      
potential, $\mu$ to the associated equilibrium state as in Corollary \ref{cor:decay-to-invariante-measure} and 
$L^2(\mu)$ to the Hilbert space of complex valued and $\mu$-square integrable functions. Furthermore, for $a \in C(\Omega)$, set $L_a :  L^2(\mu) \to L^2(\mu)$, $f \mapsto a f$ and note that $a \mapsto L_a$ defines a faithful representation of the $C^\ast$-algebra $C(\Omega)$. Hence, in order to obtain a spectral triple, it remains to construct  a suitable Dirac operator $D$. In order to do so, the authors construct a Haar basis of $L^2(\mu)$, then use this basis in order to define $D$ on the dense subset of finite linear combinations of this subset (cf. Equation (13) in \cite{MR3084488}) and finally show that $(C(\Omega),L^2(\mu),D)$ is a spectral triple (Theorem 4.1 in in \cite{MR3084488}) and that the associated spectral metric is a metric.  

We now recall their construction for the particular case of $\Omega := \left\lbrace 0,1 \right\rbrace^\N$ in order to have the most simple example at hand. In here, we refer to $W^\ast = \bigcup_{n\geq 1}\left\lbrace 0,1 \right\rbrace^n$ as the set of finite words. Moreover, for each $w \in W^\ast$, define the inner product 
\[\langle (x_0,x_1),(y_0,y_1) \rangle_w  :=  \mu([w0]) x_0y_0 +  \mu([w1]) x_1y_1  \]
on $\R^2$ and set $f_{w,0} := \mu([w0])^{-1/2}(1,0)$ and $f_{w,1} := \mu([w1])^{-1/2}(0,1)$. Then, $\{f_{w,i}:i=0,1\}$ is an orthonormal basis  with respect to  $\langle \cdot, \cdot \rangle_w$. As we are in dimension 2 and $\|\mu([w])^{-1}(1,1)\|_w =1$, there exists a unique matrix $A_\omega \in \textrm{GL}_2(\R)$ such that the  determinant of $A_w$ is positive, $A_w$ is an isometry with respect to $\langle \cdot, \cdot \rangle_w$ and $A_w f_{w,1} =  \mu([w])^{-1/2}(1,1)$. Namely, as it easily can be verified, 
\[ A_w = \sqrt{\tfrac{\mu([w1])}{\mu([w])}} \begin{pmatrix} 1 & 1 \\ -\frac{\mu([w0])}{\mu([w1])} & 1 \end{pmatrix}.\]
Moreover, the $A_w$-image of $f_{w,0}$ is  
\[ 
A_w(f_{w,0}) = \sqrt{\tfrac{\mu([w1])}{\mu([w])\mu([w0])}}  \begin{pmatrix} 1 & 1 \\ -\frac{\mu([w0])}{\mu([w1])} & 1 \end{pmatrix} \begin{pmatrix} 1 \\0   \end{pmatrix} = 
 \mu([w])^{-1/2} \begin{pmatrix} \sqrt{  {\mu([w1])}/{\mu([w0])} }  \\ - \sqrt{  {\mu([w0])}/{\mu([w1])}  }    \end{pmatrix}.
\] 
From the above, following the reasoning in \cite{MR3084488}, we set     
\begin{align} \nonumber e_w & := \frac{\langle f_{w,0}, A_w(f_{w,0}) \rangle_w}{\sqrt{\mu([w0])}} \mathbf{1}_{[w0]} + \frac{\langle f_{w,1}, A_w(f_{w,0}) \rangle_w}{\sqrt{\mu([w1])}} \mathbf{1}_{[w1]}\\
\label{eq5}
& = \frac{1}{\sqrt{\mu([w])}} \left( \sqrt{  \tfrac{\mu([w1])}{\mu([w0])}} \mathbf{1}_{[w0]} -  \sqrt{  \tfrac{\mu([w0])}{\mu([w1])}} \mathbf{1}_{[w1]} \right).
\end{align}
We are now in position to apply Theorem 3.5 in \cite{MR3084488}. That is, one obtains that 
\begin{align*}
\mathbb{B} := \left\lbrace e_w;\ w \in \textstyle W^\ast \right\rbrace \cup \left\lbrace  \mu([0])^{-\frac{1}{2}} \mathbf{1}_{[0]} \ ,  \   \mu([1])^{-\frac{1}{2}} \mathbf{1}_{[1]} \right\rbrace  
\end{align*} 
is a Haar basis of $L^2(\mu)$. Furthermore, as the proof of Theorem 3.5 only makes use of the property that $\mu(\Omega) =1$, this in fact holds for any probability measure $\mu$. As an immediate consequence of orthonormality of $\mathbb{B}$, it  follows that the following operator $D$ is well defined for any finite linear combination $f$ of  elements in $\mathbb{B}$ by   
\begin{align} \label{nhec2}
 D(f) :=  
   \frac{\langle f,\mathbf{1}_{[0]} \rangle }{\mu([0])} \mathbf{1}_{[0]}  + 
   \frac{\langle f,\mathbf{1}_{[1]} \rangle}{\mu([1])}  \mathbf{1}_{[1]} 
 - \langle f,\mathbf{1} \rangle\mathbf{1}  +
  \sum_{w \in W^\ast} \frac{\langle f, e_w \rangle} {\mu([w]) }   e_{w}.
\end{align} 
As $\mathbb{B}$ is a basis of $L^2(\mu)$, it follows that $D$ is densely defined. 
Now assume that $\varphi: \Omega \to \mathbb{R}$ is $\omega_{0,\beta \textrm{log}}$-Hölder continuous for some $\beta > 1$ and that $\mu$ is given by Corollary \ref{cor:decay-to-invariante-measure}. Moreover, as the proof of Theorem 4.1 in \cite{MR3084488} only makes use of $\mu([w]) \to 0$ as the length of $w$ tends to infinity and the distortion estimate in \eqref{eq:distortion-expanding}, we obtain the following partial generalisation to not necessarily Hölder continuous potentials.  
\begin{corollary} If $\varphi: \Omega \to \mathbb{R}$ is $\omega_{0,\beta \textrm{log}}$-Hölder continuous for some $\beta > 1$
and $\mu$ is given by Corollary \ref{cor:decay-to-invariante-measure}, then $(C(\Omega), {L}^2 (\mu),D)$ is a spectral triple.
\end{corollary}
  
However, in order to have an explicit expression for $D$ {it is necessary} to compute the values $\mu([w])$ for any cylinder set  $[w]$ in $\Omega$. The main motivation of this appendix is in fact to provide these expressions for a class of normalized potentials in the Walters family. Before we do so, we give two simple examples.
 
\begin{example}  \label{MaMa} Assume that $P=(P_{i,j})_{i,j=0,1}$ is a stochastic matrix and that $\pi \in \R^2$ is a left invariant probability vector of $P$. It is then well known that 
\[\mu([w_0\ldots w_n]) := \pi(w_0) P_{w_0w_1} \cdots  P_{w_{n-1}w_n}\]
defines a $T$-invariant probability measure. Recall that a measure of this type also is known as the Markov measure associated with $P, \pi$.  In this case, $\mathbb{B}$ is the union of $\pi(0)^{-1/2} \mathbf{1}_{[0]}$, $\pi(1)^{-1/2} \mathbf{1}_{[1]}$ and, for $w = (w_0\ldots w_n) \in W^\ast$, 
\begin{align}
    e_x  =\frac{1}{ \sqrt{\mu([w])}}   \left( \sqrt{ \tfrac{P_{x_n,1}}{P_{x_n,0}}} \mathbf{1}_{[x0]}
    - \sqrt{\tfrac{P_{x_n,0}}{P_{x_n,1}}} \mathbf{1}_{[x1]}
    \right).
    \label{eq52}
    \end{align}
\end{example}

\begin{example} \label{ind} For the case of the measure $\mu$ of maximal entropy, the above simplifies to, with $|w|$ referring to the length of $w$,  
\begin{align}
   \mathbb{B} = \left\{ \sqrt{2}^{-1} \mathbf{1}_{[0]}, \sqrt{2}^{-1} \mathbf{1}_{[1]},   \right\} 
   \cup \left\{ \sqrt{2}^{-|w|} (\mathbf{1}_{[w0]} -  \mathbf{1}_{[w1]} ): w \in W^\ast   \right\} .    \label{eq235}
\end{align}
\end{example}

\subsection{Explicit computations for the equilibrium probability for potentials on the Walters family} \label{expWal}

Our purpose here is to describe how one can get explicit expressions for \eqref{eq5} which are necessary for defining the momentum operator $D$ explicitly in the case of potentials on the Walters family.
    
Firstly, we will state a theorem of \cite{MR2342978} which we will apply. For a potential $g$ in the Walters family, which satisfies the hypotheses of Ruelle's Theorem, $\lambda$ will refer to the maximal positive eigenvalue and $h: \Omega \to \R$ to the corresponding positive eigenfunction. Furthermore, $\hat{\phi}: \Omega \to \R$ will denote the exponential of the normalized potential, that is,  
\begin{align*}
\hat{\phi} = \frac{h \. e^g}{ \lambda h \circ \sigma} \ .
\end{align*} 
The probability measures $\mu$ and $\nu$ are, respectively, the eigenfunction of the dual of the Ruelle Operator and the equilibrium state of $g$. Note that they are related through the Radon-Nikodym derivative by $d\mu = h^{-1} d\nu$. In here, we are mainly interested in potentials of the form $g = \log f$  with $f \in \Lambda$ and $\Lambda$ as constructed in \ref{subsec:Lambda}. Observe that in this case, $\lambda =1$. 

\begin{theorem}
For a potential $g$ of Walters type, which is determined by the convergent sequences $(a_n)_{n \in \N}$, $(b_n)_{n \in \N}$, $(c_n)_{n \in \N}$ and $(d_n)_{n \in \N}$, the eigenfunction $h: \Omega \to \R$ of Ruelle's operator is determined by the following equations:
\begin{align*}
    h(0^n1z) = \alpha_n = \frac{\alpha(\lambda - e^a)}{\lambda e^d} \left[ e^{d_n} + \sum_{j=1}^\infty \frac{e^{d_{j+n}}}{\lambda^j} \exp \left( \sum_{i=1}^j a_{n+i} \right) \right] \ ,
    %\label{normpotentialeq3}
    \end{align*}
    \begin{align*}
    h(1^\infty) = \beta = \frac{\alpha e^b(\lambda - e^a)}{e^d(\lambda - e^c)\lambda}\left[ e^{d_1} + \sum_{j=1}^\infty \frac{e^{d_{j+1}}}{\lambda^j} \exp \left( \sum_{i=1}^j a_{i+1} \right) \right] \ ,
    %\label{normpotentialeq4}
    \end{align*}
    \begin{align*}
    h(1^n0z) = \beta_n = \frac{\beta(\lambda - e^c)}{\lambda e^b} \left[ e^{b_n} + \sum_{j=1}^\infty \frac{e^{b_{j+n}}}{\lambda^j} \exp \left( \sum_{i=1}^j c_{n+i} \right) \right] \ ,
    %\label{normpotentialeq5}
    \end{align*}
	for all $n \in \N$ and all $x \in \Omega$. The free positive parameter $\alpha$ is chosen so that $\mu$ is a probability measure.
	\label{theo1}
	\end{theorem}
	
For $g = \log f$, with $f \in \Lambda$, we derive the expressions for $h$:
	\begin{align*}
    h(0^n1z) & = \alpha_n 
    = \frac{\alpha(\lambda - e^a)}{\lambda e^d} \left[ e^{d_n} + \sum_{j=1}^\infty \frac{e^{d_{j+n}}}{\lambda^j} \exp \left( \sum_{i=1}^j a_{n+i} \right) \right] \\
    & = 
    \frac{\alpha (1-C_0)}{C_1} \left( C_1 + \sum_{j=1}^\infty C_1 C_0^j \right) 
    = \alpha (1-C_0) \left( 1+ \frac{C_0}{1-C_0} \right) 
    = \alpha \ .
    \end{align*}
Moreover,
   \begin{align*}  h(1^\infty) & = \beta 
    = \frac{\alpha e^b(\lambda - e^a)}{e^d(\lambda - e^c)\lambda}\left[ e^{d_1} + \sum_{j=1}^\infty \frac{e^{d_{j+1}}}{\lambda^j} \exp \left( \sum_{i=1}^j a_{i+1} \right) \right] \\ &=
    \frac{\alpha C_0 (1-C_0)}{C_1(1-t)} \left( C_1 + \sum_{j=1}^\infty C_1 C_0^j \right) 
    = \frac{\alpha C_0}{1-t} ,\end{align*} 
    and
 \begin{align*} h(1^n0z) & = \beta_n 
    = \frac{\beta(\lambda - e^c)}{\lambda e^b} \left[ e^{b_n} + \sum_{j=1}^\infty \frac{e^{b_{j+n}}}{\lambda^j} \exp \left( \sum_{i=1}^j c_{n+i} \right) \right] 
    \\ &=
    \frac{\beta(1-t)}{C_0} \left[ C_0 + \sum_{j=1}^\infty C_0 \left( \prod_{i=1}^j C_{n+i} \right) \right] 
    = \beta (1-t) \left[ 1 + \sum_{j=1}^\infty \left( \prod_{i=1}^j C_{n+i} \right) \right] \\
    & =
  \alpha C_0 \left[ 1 + \sum_{j=1}^\infty \left( \prod_{i=1}^j C_{n+i} \right) \right] 
    = \alpha \left( \frac{1+ \sum_{j=1}^\infty \prod_{i=1}^j C_{n+j}}{1+ \sum_{j=1}^\infty \prod_{i=1}^j C_i} \right).
  \end{align*}
Note  that $\lim_{n \to \infty} \beta_n = \beta$. Hence, 
\begin{align*}
    \lim_n \beta_n = \beta(1-t) \left[ 1 + \lim_n \sum_{j=1}^\infty \prod_{i=1}^j C_{n+j} \right]
    = \beta(1-t) \left[ 1 + \sum_{j=1}^\infty t^j \right] 
    = \beta.
   \end{align*} 
From the above equations, it is also possible to derive explicitly the function $\hat{\phi}$:
\begin{align*}    
    \hat{\phi} (0^{n+1} 1 z) & = C_0 := \gamma_{n+1}, \quad 
    \hat{\phi} (01^n0z) = \frac{\alpha C_0}{\beta_n} \\
    \hat{\phi} ( 1^{n+1} 0z) & = \frac{\beta_{n+1} \. C_{n+1}}{\beta_n} := \delta_{n+1}, \quad 
    \hat{\phi} (10^n1z) = \frac{\beta_1 \. C_1}{\alpha},\end{align*}
    and the normalization condition can be explicitly verified:
\begin{align*}    
\hat{\phi}(10^n1z) & = \frac{\beta_1 \. C_1}{\alpha}   = \frac{C_1}{\alpha} \alpha C_0 \left[ 1 + \sum_{j=1} \infty \prod_{i=1}^j C_{1+i} \right] 
\\ & =
  C_0 \left[ \sum_{j=1}^\infty \prod_{i=1}^j C_i \right] = 1 - C_0 = 1 - \gamma_{n+1}.\end{align*}
Moreover,
\begin{align*}    
    \hat{\phi}(01^n0z) = & \frac{\alpha \. C_0}{\beta_n} = \frac{\alpha C_0}{\alpha C_0 \left[ 1 + \sum_{j=1}^\infty \prod_{i=1}^j C_{n+i} \right]} = \frac{1}{1+\sum_{j=1}^\infty \prod_{i=1}^j C_{n+ i}} = 1- x \\
    & \Updownarrow \\
    x = & 1 - \frac{1}{1 + \sum_{j=1}^\infty \prod_{i=1}^j C_{n+1}} = \frac{\sum_{j=1}^\infty \prod_{i=1}^j C_{n+i}}{1 + \sum_{j=1}^\infty \prod_{i=1}^j C_{n+i} } \\
    = & 
    \frac{\alpha C_0 \, C_{n+1} \left[ 1+ \sum_{j=1}^\infty \prod_{i=1}^j C_{n+1+i} \right]}{\alpha C_0 \left[ 1 + \sum_{j=1}^\infty \prod_{i=1}^j C_{n+i} \right] } = C_{n+1} \frac{\beta_{n+1}}{\beta_n} 
   = \delta_{n+1} . \end{align*}
The equilibrium measure $\nu$, which is the Ruelle Operator's eigenmeasure for the potential $\log \hat{\phi}$, instead of $g$, satisfies the following equations:
\begin{theorem}
For each $k \in \N$, we define
\[
\begin{matrix*}[l]
i_k = \prod_{j=2}^k \gamma_j,  
& c_k = (1-\gamma_{k+1}), \\
 d_k = (1-\delta_{k+1}),  \prod_{j=2}^k \delta_j, 
& f_k = \prod_{j=2}^k \delta_j, \\
\Gamma_k = \sum_{i=0}^{\infty} \left( \prod_{j=0}^i \gamma_{k+i} \right), 
& \Delta_k = \sum_{i=0}^\infty \left( \prod_{j=0}^i \delta_{k+i} \right) 
\end{matrix*}
\]
and write $\Theta = \Gamma_2 + \Delta_2 + 2.$ The unique equilibrium state $\nu$ for $g$ is determined by the equations
\[
\begin{matrix*}[l]
 \nu([01]) = \nu([10]) = \frac{1}{\Theta}, & \nu([00]) = \frac{\Gamma_2}{\Theta}, \quad \quad \nu([11]) = \frac{\Delta_2}{\Theta}, \\
 \nu([0^{n+2}]) = \frac{\left( \prod_{j=2}^{n+1} \gamma_j \right) \. \Gamma_{n+2}}{\Theta}, 
 & \nu([1^{n+2}]) = \frac{\left( \prod_{j=2}^{n+1} \delta_j \right) \. \Delta_{n+2}}{\Theta} . 
\end{matrix*}
\]
		Moreover, for $r \in \N$, $i \in \N_r$, $k_i,l_i \in \N$,
		\begin{align*}
		\nu([0^{k_1}1^{l_1}0^{k_2}1^{l_2}\ldots0^{k_r}1^{l_r}]) &= \frac{i_{k_1} d_{l_1} \left( \prod_{j=2}^{r-1}c_{k_j}d_{l_j} \right) c_{k_r} f_{l_r}}{\Theta} \ , \\
		\nu([0^{k_1}1^{l_1}0^{k_2}1^{l_2}\ldots1^{l_{r-1}}0^{k_r}]) &= \frac{i_{k_1} \left( \prod_{j=1}^{r-2} d_{l_j} c_{k_j} \right) d_{l_{r-1}}(r) i_{k+r}}{\Theta} \ ;
		\end{align*}
		if, in the two last equations, we change zeros by ones and ones by zeros in the left sides, the right sides changes by switching $f's$ by $i's$, $i's$ by $f's$, $c's$ by $d's$ and $d's$ by $c's$.
		\label{sec3theo1}
	\end{theorem}

From the above we get
$$  i_k = C_0^{k-1} \, ,
    c_k = (1-C_0) C_0^{k-1} \, ,
    d_k =  \left( 1 - \frac{\beta_{k+1}C_{k+1}}{\beta_k} \right) \frac{\beta_k}{\beta_1} \prod_{j=2}^k C_j \, ,
    f_k =  \frac{\beta_k}{\beta_1} \prod_{j=2}^k C_j .$$
With all this information we can  determine the $\nu$-measure or the $\mu$-measure of each cylinder set $[x]$. For $\nu$, \ref{sec3theo1} and the above identities are enough, whereas for
 $\mu$-measures, it is necessary to use the relation $d\mu = h^{-1} \ d\nu$. For example,
    \begin{align*}
\mu([00]) = \sum_{k=2}^\infty \mu[0^k1] = \sum_{k=2}^\infty \int 1_{[0^k1]} h^{-1} \ d\nu = \sum_{k=2}^\infty \frac{\nu[0^k1]}{h(0^k1z)} = \sum_{k=2}^\infty \frac{\prod_{j=2}^k \gamma_j}{h(0^k1z) \left( \Gamma_2 + \Delta_2 + 2 \right)} \ ,
\end{align*}
and then, it follows
\begin{align*}
\mu([00]) &= \frac{1}{\Theta} \sum_{k=2}^\infty \frac{\prod_{j=2}^k \gamma_ j}{h(0^k1z)} = \frac{1}{\alpha \. \Theta} \sum_{k=2}^\infty C_0^{k-1} = \frac{C_0}{\alpha \Theta (1-C_0)} \ .
\end{align*}
Or, another example,
\begin{align*}
& \mu( [0^{k_1} 1^{l_1} \ldots 0^{k_r} 1^{l_r}0] ) \\
= &  \int_{[0^{k_1} 1^{l_1} \ldots 0^{k_r} 1^{l_r}0]} h^{-1} \ d\nu 
= h^{-1}(0^{k_1}1z) \nu [0^{k_1} 1^{l_1} \ldots 0^{k_r} 1^{l_r}0] 
\\  = & 
h^{-1}(0^{k_1} 1z) \frac{i_{k_1} \left( \prod_{j=1}^{r-1}d_{l_j}c_{k_{j+1}} \right) d_{l_r}}{\Gamma_2  + \Delta_2 + 2}
\\ = &  
\frac{C_0^{k_1-1}}{\alpha \Theta} \prod_{j=1}^{r-1} \left[ \left( 1 - \frac{\beta_{l_j+1} C_{l_j+1}}{\beta_{l_j}} \right) \frac{\beta_{l_j}}{\beta_1} \left( \prod_{i=2}^{l_j}C_i \right)(1-C_0) C_0^{k_j} \right] \left( 1-  \frac{\beta_{l_r+1}C_{l_r+1}}{\beta_{l_r}} \right) \frac{\beta_{l_r}}{\beta_1} \prod_{j=2}^{l_r} C_j.
\end{align*}
The computations above describe a general method for getting the probabilities of several kinds of cylinder sets.
%
%
% \section*{Statements and declarations}
% \subsection*{Competing Interests and Funding}
%
\subsection*{Acknowledgements}
L. Cioletti and A. O. Lopes partially supported by Conselho Nacional de Desenvolvimento Científico e Tecnológico, and M. Stadlbauer acknowledges financial support from Fundação Carlos Chagas Filho de Amparo à Pesquisa do Estado do Rio de Janeiro (FAPERJ) through grant E-26/210.388/2019. Furthermore, this study was financed in part by the Coordenação de Aperfeiçoamento de Pessoal de Nível Superior - Brasil (CAPES) - Finance Code 001 through a visiting grant of M. Stadlbauer of the PrInt program.

% Moreover, all authors certify that  they have no competing interests to declare that are relevant to the content of this article.
%
% \subsection*{Data availability}
% Data sharing is not applicable to this article as no datasets were generated or analysed during the current study.

%
% \bibliographystyle{abbrv}
% \bibliography{$HOME/Math/Papers/cloud_library}
%
% \end{document}

% % % % % % % % % % % % % % % % % % % % % % % % % % % % % %

\end{document}